\DeclareFontFamily{U}{tipa}{}
\DeclareFontShape{U}{tipa}{m}{n}{<->tipa10}{}
\newcommand{\arc@char}{{\usefont{U}{tipa}{m}{n}\symbol{62}}}%
\newcommand{\arc}[1]{\mathpalette\arc@arc{#1}}
\newcommand{\arc@arc}[2]{%
  \sbox0{$\m@th#1#2$}%
  \vbox{
    \hbox{\resizebox{\wd0}{\height}{\arc@char}}
    \nointerlineskip
    \box0
  }%
}
\theoremstyle{definition}
\newtheorem{theorem}{Theorem}[section]
\newtheorem{thm}[theorem]{Theorem}
\newtheorem{prop}[theorem]{Proposition}
\newtheorem{defn}[theorem]{Definition}
\newtheorem{lemma}[theorem]{Lemma}
\newtheorem{prop-def}{Proposition-Definition}[section]
\newtheorem{rema}[theorem]{Remark}
\newcommand{\R}{{\mathbb R}}
\newcommand{\N}{{\mathbb N}}
\newcommand{\C}{{\mathbb C}}
\newcommand{\Z}{{\mathbb Z}}
\newcommand{\one}{\mathbf{1}}
\renewcommand{\d}{\mathbf{d}}
\newcommand{\wt}{\mbox{\rm wt}\ }
\newcommand{\tr}{\textrm{Tr }}
\begin{document}

\setlength{\oddsidemargin}{0cm} \setlength{\evensidemargin}{0cm}
\baselineskip=18pt

\title{On modules for meromorphic open-string vertex algebras}

\author{Fei Qi}

\begin{abstract}
We study modules of the meromorphic open-string vertex algebra (MOSVAs hereafter), a noncommutative generalization of vertex (operator) algebra defined by Yi-Zhi Huang. We start by recalling the definition of a MOSVA $V$ and left $V$-modules given in Huang's paper. Then we define right $V$-modules and $V$-bimodules that reflect the noncommutative nature of $V$. When $V$ satisfies a condition on the order of poles of the correlation function (which we call pole-order condition), we prove that the rationality of products of two vertex operators implies the rationality of products of any numbers of vertex operators. Also, the rationality of iterates of any numbers of vertex operators is established, and is used to construct the opposite MOSVA $V^{op}$ of $V$. It is proved here that right (resp. left) $V$-modules are equivalent to left (resp. right) $V^{op}$-modules. Using this equivalence, we prove that if $V$ and a grading-restricted left $V$-module $W$ is endowed with a M\"obius structure, then the graded dual $W'$ of $W$ is a right $V$-module. This proof is the only place in this paper that needs the grading-restriction condition. Also, this result is generalized to not-grading-restricted modules under a strong pole-order condition that is satisfied by all existing examples of MOSVAs and modules. 
\end{abstract}

\maketitle

\section{Introduction}

Vertex (operator) algebras are algebraic structures formed by (meromorphic) vertex operators. In mathematics, they arose naturally in the study of representations of infinite-dimensional Lie algebras and the Monster group (see \cite{FLM} and \cite{B}). In physics, they arose in the study of two-dimensional conformal field theory (2d CFT hereafter, see \cite{BPZ} and \cite{MS}). One of the most important properties of the vertex operators for a vertex (operator) algebra is the commutativity, which plays an important role in the study of these algebras and their representation theory. Mathematically, the commutativity, especially the equivalent commutator formula, makes it possible to use the Lie-theoretic methods to study vertex (operator) algebras and modules. Many results are proved based on the commutativity. In physical terms, vertex operators for a vertex (operator) algebra or a module correspond to fields of a special kind: meromorphic fields. The commutativity of vertex operators is closely related to the locality of meromorphic fields in two-dimensional conformal field theory. This commutativity is one of the most important reasons for the success of the mathematical construction of 2d CFT using the vertex (operator) algebras, its modules and the intertwining operators among the modules. 

However, if we want to use vertex-algebraic methods to study quantum field theories in general, the commutativity might not hold even for meromorphic fields. One important class of quantum field theories is the nonlinear $\sigma$-model with the target manifold being a Riemannian manifold. If we want to realize certain differential operators on the manifold as components of some vertex operators, then these vertex operators cannot be commutative. 

On the other hand, vertex (operator) algebras also have associativity, which is even more fundamental. In physical terms, associativity of vertex operators can be viewed as a strong form of the operator product expansion (OPE hereafter) of meromorphic fields. And the OPE of fields is expected to hold for all quantum field theories. This is one of the motivations for studying algebraic structures of suitable vertex operators that have associativity but not necessarily commutativity. In 2003, Huang and Kong introduced and constructed open-string vertex algebras in \cite{HK-OSVA}. In 2012, Huang introduced the notion of meromorphic open-string vertex algebras in \cite{H-MOSVA}, a special case of open-string vertex algebra for which the correlation functions are rational functions. 

Our motivation of studying meromorphic open-string vertex algebras (MOSVAs hereafter) are the following: first, just as vertex (operators) algebras can be viewed as analogues of commutative associative algebras, MOSVAs can be viewed as analogues of associative algebras that are not necessarily commutative. In particular, all vertex (operator) algebras are MOSVAs. So all the results for MOSVA also hold for vertex (operator) algebras. Since all correlation functions are rational functions, it is easier to deal with issues related to convergence and analytic extensions for MOSVAs than general open-string vertex algebras. 

In 2012, Huang also constructed an example of MOSVA using parallel sections of tensor products of tangent bundles on any fixed Riemannian manifold (see \cite{H-MOSVA-Riemann}). The explicit computation in the case of the spheres will be given in \cite{Q-Ex}. More importantly, Huang constructed modules generated by eigenfunctions of Laplacian operator. In physics, the eigenfunctions correspond to quantum states of a particle, which can be viewed as a degenerated form of a string. Elements of the MOSVA modules generated by eigenfunctions can be viewed as suitable string-theoretic excitations of the particle states. It is Huang's idea that the MOSVAs constructed from Riemannian manifolds, together with modules generated by Laplacian eigenfunctions and the still-to-be-defined intertwining operators among these modules may lead to a mathematical construction of the quantum two-dimensional nonlinear $\sigma$-model. Huang also hopes that this will shed lights on the four-dimensional Yang-Mills theory, which, though much more difficult, is indeed analogous to the two-dimensional nonlinear $\sigma$-model whose target manifold is a Lie group. 

Another motivation for studying MOSVA is the cohomological criterion for the reductivity
Huang and the author proved in 2015. In 2010, Huang developed the cohomology theory of grading-restricted vertex algebras in \cite{Hcoh} and \cite{H1st-sec-coh}, which can be viewed as an analogue to the Harrison cohomology for commutative associative algebras. In fact, Huang already introduced in \cite{Hcoh} a cohomology analogous to the Hochschild cohomology for associative algebras and the criterion we found is expressed in terms of this cohomology, not the one analogous to the Harrison cohomology. We proved that, if all first cohomologies of a grading-restricted vertex algebra in this cohomology theory vanish, then every module of finite length satisfying a certain composable condition is completely reducible. Just as Hochschild cohomology is for all associative algebras that are not necessarily commutative, a cohomology theory analogous to Hochschild cohomology for MOSVA should be established. The cohomological criterion we proved for vertex algebras also generalizes to MOSVAs. The details of the cohomology theory will be given in \cite{Q-Coh}. And the proof of the cohomological criterion of reductivity will be given in \cite{HQ-Red}. 

This paper further develops the theory of MOSVA to provide a foundation for \cite{Q-Ex}, \cite{Q-Coh}, \cite{HQ-Red} and future research. The sections are organized as follows:

In Section 2 we recall the definitions of MOSVA and left module defined in \cite{H-MOSVA} and give the definitions of right module and bimodule for MOSVAs. Roughly speaking, the right module is defined in such a way that all the properties for the intertwining operator of type $\binom{W}{W V}$ given in [FHL] that still make sense hold. The bimodules are defined as a vector space with both a left module structure and a right module structure satisfying certain compatibility conditions. 

In Section 3, we discuss a pole-order condition that simplifies the verifications of the rationality axiom. With this pole-order condition, it suffices to verify the rationality axiom for the products of two vertex operators. For vertex (operator) algebras, such results were proved using commutativity (See Sec. 2.6 of \cite{FHL}). It is somehow surprising to have an argument that only involves associativity. Also, we express the pole-order condition using formal series and weak associativity, thus obtain a sufficient condition that can be used to verify the axioms for MOSVAs and modules in terms of formal variables. 

In Section 4, we discuss the rationality of iterates of any number of vertex operators and specify the region of convergence. In the discussion we do not assume the module $W = \coprod\limits_{m\in \C} W_{[m]}$ to be grading-restricted, i.e., $\dim W_{[m]}$ can be infinite for some $m$. So instead of using the algebraic completion $\overline{W} = \prod\limits_{m\in \C} W_{[m]}$, we need to use the much larger space $\arc{W} = \prod\limits_{m\in\C} W_{[m]}^{**}$. Complex analysis plays an important role in dealing with the convergence issues on this space. Some exposition is provided for the convenience of the reader. The discussion is for left modules only, but the argument and conclusion easily generalize to right modules and bimodules. 

In Section 5, for a fixed MOSVA, we define the skew-symmetry opposite vertex operators and opposite MOSVAs, which are analogous to opposite multiplications and opposite rings in classical ring theory. We prove that left modules for the opposite MOSVA is equivalent to right modules for the MOSVA, and right modules for the opposite MOSVA is equivalent to left modules for the MOSVA. The main technical part is to verify the rationality axiom. The results in Section 4 are needed in the proof.  

In Section 6, we define the notion of a M\"obius MOSVA, a noncommutative generalization of the M\"obius vertex algebras (see \cite{FHL}). We prove that for a M\"obius MOSVA $V$ and a grading-restricted M\"obius left $V$-module $W=\coprod_{m\in \C} W_{[m]}$, the graded dual $W' = \coprod_{m\in \C} W_{[m]}^*$ of $W$, with appropriately defined vertex operator, is a M\"obius right $V$-module. The result generalizes to non-grading-restricted modules if the vertex operators of the module $W$ satisfies a stronger pole-order condition. The proof uses the theory of opposite MOSVAs developed in Section 5. 

\noindent \textbf{Acknowledgement.} This paper is written under the guidance of Yi-Zhi Huang. The author is very grateful to his patience in the discussions, his enthusiasm in this topic, and his tolerance to my occasional silly questions. The author also thanks James Lepowsky for his helpful comments. 

\section{Definitions and Immediate Consequences}

\subsection{Definition of the MOSVA}

We first recall the notion of meromorphic open-string vertex algebra given in \cite{H-MOSVA}. By convention, $x$ is understood as a formal variable while $z$ is understood as a complex variable. 

\begin{defn}\label{DefMOSVA}
{\rm A {\it meromorphic open-string vertex algebra} (hereafter MOSVA) is a $\Z$-graded vector space 
$V=\coprod_{n\in\Z} V_{(n)}$ (graded by {\it weights}) equipped with a {\it vertex operator map}
\begin{eqnarray*}
   Y_V:  V\otimes V &\to & V[[x,x^{-1}]]\\
	u\otimes v &\mapsto& Y_V(u,x)v
  \end{eqnarray*}
and a {\it vacuum} $\one\in V$, satisfying the following axioms:
\begin{enumerate}
\item Axioms for the grading:
\begin{enumerate}
\item {\it Lower bound condition}: When $n$ is sufficiently negative,
$V_{(n)}=0$.
\item {\it $\d$-commutator formula}: Let $\d_{V}: V\to V$
be defined by $\d_{V}v=nv$ for $v\in V_{(n)}$. Then for every $v\in V$
$$[\d_{V}, Y_{V}(v, x)]=x\frac{d}{dx}Y_{V}(v, x)+Y_{V}(\d_{V}v, x).$$
\end{enumerate}

\item Axioms for the vacuum: 
\begin{enumerate}
\item {\it Identity property}: Let $1_{V}$ be the identity operator on $V$. Then
$Y_{V}(\mathbf{1}, x)=1_{V}$. 
\item {\it Creation property}: For $u\in V$, $Y_{V}(u, x)\mathbf{1}\in V[[x]]$ and 
$\lim_{x\to 0}Y_{V}(u, x)\mathbf{1}=u$.
\end{enumerate}

\item {\it $D$-derivative property and $D$-commutator formula}:
Let $D_V: V\to V$ be the operator
given by
$$D_{V}v=\lim_{x\to 0}\frac{d}{dx}Y_{V}(v, x)\one$$
for $v\in V$. Then for $v\in V$,
$$\frac{d}{dx}Y_{V}(v, x)=Y_{V}(D_{V}v, x)=[D_{V}, Y_{V}(v, x)].$$

\item {\it Rationality}: Let $V'=\coprod_{n\in \Z}V_{(n)}^*$ be the graded dual of $V$. 
For $u_1, \cdots, u_n, v\in V, v'\in V'$, the series
$$\langle v', Y_V(u_1, z_1)\cdots Y_V(u_n, z_n)v\rangle$$
converges absolutely when $|z_1|>\cdots>|z_n|>0$ to a rational function in $z_1,\cdots, z_n$, 
with the only possible poles at $z_i=0, i=1, ... , n$ and $z_i=z_j, 1\leq i\neq j\leq n$. 
For $u_1, u_2, v\in V$ and $v' \in V'$, the series
$$\langle v', Y_V(Y_V(u_1,z_1-z_2)u_2,z_2)v\rangle$$
converges absolutely when $|z_2|>|z_1-z_2|>0$ to a rational function with the only possible poles at 
$z_1=0, z_2=0$ and $z_1=z_2$.

\item {\it Associativity}: For $u_{1}, u_{2}, v\in V$ and
$v'\in V'$, we have 
$$\langle v', Y_{V}(u_{1}, z_{1})Y_{V}(u_{2}, z_{2})v\rangle=\langle v', Y_{V}(Y_{V}(u_{1}, z_{1}-z_{2})u_{2}, z_{2})v\rangle$$
when $|z_{1}|>|z_{2}|>|z_{1}-z_{2}|>0$.
\end{enumerate}  }
We denote the MOSVA just defined by $(V, Y_V, \one)$ or simply by $V$ when there is no confusion.
\end{defn}

\begin{defn}
A meomorphic open-string vertex algebra $V$ is said to be \textit{grading-restricted} if 
$\dim V_{(n)}<\infty$ for $n\in \Z$.
\end{defn}

\begin{rema}
Most of the current-existing examples of MOSVA are grading-restricted. But in this paper, all the conclusion hold without this assumption. \end{rema}


\begin{rema}
If in addition, $V$ satisfies commutativity, namely, for every $u_1, u_2, v\in V, v' \in V'$
$$\langle v', Y_V(u_1, z_1)Y_V(u_2, z_2)v\rangle$$
converges absolutely when $|z_1|>|z_2|> 0$ to the same rational function that 
$$\langle v', Y_V(Y_V(u_1, z_1-z_2)u_2, z_2)v\rangle$$
converges to when $|z_2| > |z_1-z_2|> 0$, it was proved in \cite{FHL} that in this case the Jacobi identity for vertex algebras holds and $V$ is a vertex algebra with lower bounded $\Z$-grading. So a MOSVA can be treated as a noncommutative generalization to a vertex algebra. All vertex operator algebras are MOSVAs.
\end{rema}

Axioms 1, 2 and 3 make it possible to carry over some facts of vertex algebras to MOSVA: 

\begin{prop}\label{ImmediateFacts}
Let $V$ be a MOSVA. Then
\begin{enumerate}
\item For $u\in V$, $Y_V(u,x)$ can be regarded as a formal series in $\text{End}(V)[[x,x^{-1}]]$
$$Y_V(u, x)= \sum_{n\in\Z} (Y_V)_n(u) x^{-n-1}$$
where $(Y_V)_n(u): V \to V$ is a linear map for every $n\in \Z$. If $u$ is homogeneous, then $(Y_V)_n(u)$ is a map of weight $\wt u - n- 1$. 
\item For fixed $u, v\in V$, $Y_V(u, x)v$ is lower truncated, i.e, the series has at most finitely many negative powers of $x$. 
\item For $u\in V$, 
$$Y_V(u, x)\one = e^{xD_V} u$$ 
\item $D$-conjugation property: for $u\in V$, 
$$Y_V(u, x+y) = Y_V(e^{yD_V}u, x) = e^{yD_V} Y_V(u, x) e^{-yD_V},$$
in $\text{End}(V)[[x,x^{-1}, y]]$. 
\item $\d$-conjugation property: for $u\in V$, 
$$  e^{y\d_V} Y_V(u, x) e^{-y \d_V} = Y_V(e^{y\d_V} u, xy)$$
in $\text{End}(V)[[x,x^{-1}, y, y^{-1}]]$. 
\end{enumerate}
\end{prop}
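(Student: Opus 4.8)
The five assertions are the MOSVA counterparts of standard consequences of the vertex-algebra axioms, and — as the sentence preceding the proposition anticipates — the plan is to derive each of them using only the grading axioms, the vacuum axioms, and the $D$-derivative property and $D$-commutator formula, never invoking rationality or associativity. I would prove them in the stated order, since (2) uses (1), (4) uses (2), and (5) uses (1).

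For (1): for fixed $u$ the assignment $v \mapsto Y_V(u,x)v \in V[[x,x^{-1}]]$ is linear, hence is recorded by its coefficients $(Y_V)_n(u) \in \End(V)$ of $x^{-n-1}$; this part is bookkeeping. For the weights I would take $u$ homogeneous, substitute into the $\d$-commutator formula, and compare coefficients of $x^{-n-1}$, obtaining $[\d_V,(Y_V)_n(u)] = (\wt u - n - 1)(Y_V)_n(u)$, which is exactly the statement that $(Y_V)_n(u)$ shifts weight by $\wt u - n - 1$. Then (2) follows from (1) and the lower bound condition: for homogeneous $u, v$ one has $(Y_V)_n(u)v \in V_{(\wt u + \wt v - n - 1)}$, which is $0$ once $n$ is large enough, so $Y_V(u,x)v$ has only finitely many negative powers of $x$; a reduction to finitely many homogeneous summands covers arbitrary $u, v$. (Alternatively, the single-operator case of rationality shows $\langle v', Y_V(u,z)v\rangle$ is a Laurent polynomial for each fixed $v'$, but the grading argument is the cleanest route.)

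For (3): first observe $D_V \one = 0$, since $Y_V(\one, x) = 1_V$ is constant in $x$ by the identity property. Put $f(x) = Y_V(u,x)\one$; the creation property gives $f \in V[[x]]$ with $f(0) = u$, and applying the $D$-commutator formula to $\one$ gives $\frac{d}{dx}f(x) = [D_V, Y_V(u,x)]\one = D_V f(x)$, whose solution subject to $f(0)=u$ is $f(x) = \sum_{k\ge 0}\frac{x^k}{k!}D_V^k u = e^{xD_V}u$. For the first equality of (4) I would use the formal Taylor expansion $Y_V(u,x+y) = \sum_{k\ge 0}\frac{y^k}{k!}\left(\frac{d}{dx}\right)^k Y_V(u,x)$ — legitimate on each fixed $v$ by the lower truncation from (2), with $(x+y)^{-n-1}$ expanded in nonnegative powers of $y$ — together with the iterated $D$-derivative property $\left(\frac{d}{dx}\right)^k Y_V(u,x) = Y_V(D_V^k u, x)$, giving $Y_V(u,x+y) = Y_V(e^{yD_V}u,x)$. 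For the second equality, iterating instead the $D$-commutator formula gives $Y_V(D_V^k u, x) = (\operatorname{ad} D_V)^k\bigl(Y_V(u,x)\bigr)$, so $Y_V(e^{yD_V}u,x) = e^{y\operatorname{ad} D_V}\bigl(Y_V(u,x)\bigr) = e^{yD_V}Y_V(u,x)e^{-yD_V}$ by the identity $e^{A}Be^{-A} = e^{\operatorname{ad} A}B$ read as a formal power series in $y$; note $D_V$ raises weight by one, so these exponentials act without convergence issues.

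Finally, (5) is the exponentiated $\d$-commutator formula, and I would prove it either by reducing to homogeneous $u$ and $v$ and comparing coefficients of $x$ on the two sides — using the weight formula of (1) to evaluate $e^{\pm y\d_V}$ on homogeneous vectors — or by differentiating $F(y) := e^{y\d_V}Y_V(u,x)e^{-y\d_V}$ in $y$, using the $\d$-commutator formula to reduce to an equation solved uniquely by the right-hand side, and checking $F(0) = Y_V(u,x)$. The one genuinely delicate point, and the step I expect to cost the most care, is the formal-variable bookkeeping in (5) and in (4): pinning down precisely in which space of formal series each identity is asserted, how a series of the form $Y_V(w,xy)$ in the product $xy$ is to be read, and how the nonnegative-power expansion of $(x+y)^{-n-1}$ enters. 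Everything else is a direct unwinding of the axioms.
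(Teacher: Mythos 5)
Your proposal is correct and follows essentially the same route as the paper's (much terser) proof: linearity plus the $\d$-commutator formula for (1), the weight computation plus the lower bound condition for (2), the $D$-derivative/commutator property applied to $Y_V(u,x)\one$ for (3), formal Taylor expansion together with exponentiation of the $D$-commutator formula for (4), and exponentiation of the $\d$-commutator formula for (5). Your variant of (3) via the formal ODE $f'=D_Vf$ with $f(0)=u$ is equivalent to the paper's induction on $D_V^n v=\lim_{x\to 0}\frac{d^n}{dx^n}Y_V(v,x)\one$, and your flagged caution about the bookkeeping in (5) is warranted: a coefficient-by-coefficient comparison for homogeneous $u$ actually produces the multiplicative conjugation $y^{\d_V}Y_V(u,x)y^{-\d_V}=Y_V(y^{\d_V}u,xy)$, so the identity as printed (with $e^{y\d_V}$ paired with the substitution $x\mapsto xy$) should be read in that form.
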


\begin{rema}\label{Taylor}
In the statement of the $D$-conjugation property, as in \cite{FHL} and \cite{LL}, the series $Y(u, x+y)$ is a series with two variables $x$ and $y$, obtained by expanding all the powers of $x+y$ as power series in $y$. 
\end{rema}

\begin{proof}
\begin{enumerate}
\item Follows from the linearity of $Y_V(u,x)v$ in both $u$ and $v$, and the $\d$-commutator formula. 
\item When $u, v$ are homogeneous, the coefficient $(Y_V)_n(u)v$ of $x^{-n-1}$ in $Y_V(u,x)v$ is also homogeneous of weight $m = \wt u + \wt v - n - 1$. As $n$ gets sufficient large, $m$ becomes sufficiently negative and by the lower bound condition, $V_{(m)}=0$. So $(Y_V)_n(u)v = 0$ when $n$ gets sufficiently large, hence the series $\sum\limits_{n\in \Z} (Y_V)_n(u)vx^{-n-1}$ is lower truncated. 
\item Use the $D$-derivative property and induction, it is easy to show for $n=0, 1, ...$, 
$$D_V^n v = \lim_{x\to 0} \frac{d^n}{dx^n} Y(v,x)\one$$
So $Y_V(v,x)\one$, as a power series, has $D_V^n v/ n!$ as the coefficient of $x^n$. Hence 
$$Y_V(v,x)\one = \sum_{n=0}^\infty \left(\frac 1 {n!} D_V^n v\right)x^n = e^{xD_V} v$$
\item The first equality follows from the $D_V$-derivative property. The second equality follows from the exponentiation of the $D_V$-commutator formula. 
\item This is a consequence of the $\d_V$-commutator formula. 
\end{enumerate}
\end{proof}

\begin{rema}
Recall that a vertex algebra with lower bounded $\Z$-grading can be defined using the following duality axioms: for every $v'\in V, u_1, u_2, v\in V$, the following series
$$\langle v', Y_V(u_1, z_1)Y_V(u_2,z_2)v\rangle, |z_1|>|z_2|>0$$
$$\langle v', Y_V(u_2, z_2)Y_V(u_1,z_1)v\rangle, |z_2|>|z_1|>0$$
$$\langle v', Y_V(Y_V(u_1, z_1-z_2)u_2,z_2)v\rangle, |z_2|>|z_1-z_2|>0$$
converge absolutely to a common rational function with the only possible poles at $z_1=0,z_2=0,z_1=z_2$. We don't have to assume the rationality of products of more than 2 vertex operators because this can be obtained with the commutativity (See \cite{FHL}, Section 2.6 for details). With the absence of commutativity, such rationality won't hold unless one imposes extra conditions. We will investigate these conditions in Section 3. 
\end{rema}

\begin{rema}\label{FormFact}
For the MOSVA defined in Definition \ref{DefMOSVA}, some familiar facts in terms of formal series in general do \textit{not} hold. For example, there might not exist $p_1\in \N$, such that for some $p_2, p_{12}\in \N$,
$$x_1^{p_1}x_2^{p_2}(x_1-x_2)^{p_{12}}Y_V(u_1, x_1)Y_V(u_2, x_2)v\in V[[x_1, x_2]]$$
So in general, it is \textit{not} true that $Y_V(u_1, x_1)Y_V(u_2,x_2)v$ is the expansion of a power series localized at the multiplicative set generated by $x_1, x_2$ and $x_1-x_2$ 
with all the negative powers of $(x_1-x_2)$ expanded as power series in $x_2$. 
In fact, if we want $Y_V(u_1, x_1)Y_V(u_2, x_2)v$ to expanded from such a localized power series, we need to make sure that there exists integers $p_1, p_2$ and $p_{12}$ such that 
$z_1^{p_1}z_2^{p_2}(z_1-z_2)^{p_{12}}\langle v', Y_V(u_1, z_1)Y_V(u_2,z_2)v\rangle$
converges absolutely to polynomial function in $z_1, z_2$ for every $v'\in V'$. While one can find $p_2$ and $p_{12}$ that does not depend on $v'$, Definition \ref{DefMOSVA} does not tell us if one can find such $p_1$ .  

We will come back to this issue in Section 3 and systematically discuss conditions that should be assumed in addition to those in Definition \ref{DefMOSVA} for the formal variable approach to work. 

\end{rema}

\begin{rema}
Note also that we only assume the rationality of iterates of two vertex operators in the definition. The rationality of iterates of any numbers of vertex operators can be proved based on these assumptions. The details will be given in Section 4. 
\end{rema}


\subsection{Left modules for MOSVAs}

The notion of left modules for a meromorphic open-string vertex algebra was introduced in \cite{H-MOSVA}. The philosophy is similar to the modules for vertex algebras: all the defining properties of a MOSVA that make sense hold. 

\begin{defn}\label{DefMOSVA-L}
Let $(V, Y_{V}, \one)$ be a meromorphic open-string vertex algebra.
A \textit{left $V$-module} is a $\C$-graded vector space 
$W=\coprod_{m\in \C}W_{[m]}$ (graded by \textit{weights}), equipped with 
a \textit{vertex operator map}
\begin{eqnarray*}
Y_W^L: V\otimes W & \to & W[[x, x^{-1}]]\\
u\otimes w & \mapsto & Y_W^L(u, x)w,
\end{eqnarray*}
an operator $\d_{W}$ of weight $0$ and 
an operator $D_{W}$ of weight $1$, satisfying the 
following axioms:
\begin{enumerate}

\item Axioms for the grading: 
\begin{enumerate}
\item \textit{Lower bound condition}:  When $\text{Re}{(m)}$ is sufficiently negative,
$W_{[m]}=0$. 
\item  \textit{$\mathbf{d}$-grading condition}: for every $w\in W_{[m]}$, $\d_W w = m w$.
\item  \textit{$\mathbf{d}$-commutator formula}: For $u\in V$, 
$$[\mathbf{d}_{W}, Y_W^L(u,x)]= Y_W^L(\mathbf{d}_{V}u,x)+x\frac{d}{dx}Y_W^L(u,x).$$
\end{enumerate}

\item The \textit{identity property}:
$Y_W^L(\one,x)=1_{W}$.

\item The \textit{$D$-derivative property} and the  \textit{$D$-commutator formula}: 
For $u\in V$,
\begin{eqnarray*}
\frac{d}{dx}Y_W^L(u, x)
&=&Y_W^L(D_{V}u, x) \\
&=&[D_{W}, Y_W^L(u, x)].
\end{eqnarray*}

\item \textit{Rationality}: For $u_{1}, \dots, u_{n}\in V, w\in W$
and $w'\in W'$, the series 
$$
\langle w', Y_W^L(u_{1}, z_1)\cdots Y_W^L(u_{n}, z_n)w\rangle
$$
converges absolutely 
when $|z_1|>\cdots >|z_n|>0$ to a rational function in $z_{1}, \dots, z_{n}$
with the only possible poles at $z_{i}=0$ for $i=1, \dots, n$ and $z_{i}=z_{j}$ 
for $i\ne j$. For $u_{1}, u_{2}\in V, w\in W$
and $w'\in W'$, the series 
$$
\langle w', Y_W^L(Y_{V}(u_{1}, z_1-z_{2})u_{2}, z_2)w\rangle
$$
converges absolutely when $|z_{2}|>|z_{1}-z_{2}|>0$ to a rational function
with the only possible poles at $z_{1}=0$, $z_{2}=0$ and $z_{1}=z_{2}$. 

\item \textit{Associativity}: For $u_{1}, u_{2}\in V, w\in W$, 
$w'\in W'$, 
$$
\langle w', 
Y_W^L(u_{1},z_1)Y_W^L(u_{2},z_2)w\rangle
=
\langle w', 
Y_W^L(Y_{V}(u_{1},z_{1}-z_{2})u_{2},z_2)w\rangle
$$
when  $|z_{1}|>|z_{2}|>|z_{1}-z_{2}|>0$. 
\end{enumerate} 
We denote the left $V$-module just defined by $(W, Y_W^L, \d_{W}, D_{W})$ or simply $W$ when there is no confusion. 
\end{defn}

\begin{defn}
A left $V$-module is said to be \textit{grading-restricted} if 
$\dim W_{[m]}<\infty$ for every $m\in \C$. 
\end{defn}

\begin{rema}
Throughout the discussion in this paper, we will \textit{not} assume the $V$-modules to be grading-restricted, except for Theorem \ref{W'Module} in Section 6. This theorem can still be generalized to non-grading-restricted modules under certain additional conditions. See Theorem \ref{W'Module-1} for details. 
\end{rema}

\begin{rema}\label{numdegree}
If we let 
$$\frac{f(z_1, ..., z_n)}{\prod\limits_{i=1}^n z_i^{p_i}\prod\limits_{1\leq i < j \leq n} (z_i-z_j)^{p_{ij}}}$$
be the rational function determined by the series 
$$\langle w', Y_W^L(u_1, z_1)\cdots Y_W^L(u_n, z_n)w\rangle,$$
then for homogeneous $u_1, ..., u_n\in V, w\in W, w'\in W'$, we can explicitly compute the total degree of the homogeneous polynomial $f(z_1, ..., z_n)$ in terms of the weights and $p_i, p_{ij}$'s. We start by expanding the series as
$$\sum_{k_1, ..., k_n} \langle w', (Y_W^L)_{k_1}(u_1) \cdots (Y_W^L)_{k_n} (u_n) w\rangle z_1^{-k_1-1}\cdots z_n^{-k_n-1} $$
then the coefficients are nonzero only when
$$\wt w' = \wt u_1 - k_1 - 1 + \cdots + \wt u_n - k_n - 1 + \wt w$$
In particular, 
$$\text{Re}(\wt w') = \wt u_1 - k_1 - 1 + \cdots + \wt u_n - k_n - 1 + \text{Re}(\wt w)$$
Thus 
\begin{align*}
\deg f & = \sum_{i=1}^n p_i + \sum_{1\leq i < j \leq n} p_{ij} + (-k_1-1-k_2-1-\cdots - k_n -1 )\\
& = \sum_{i=1}^n p_i + \sum_{1\leq i < j \leq n} p_{ij} + \text{Re}(\wt w') - \sum_{i=1}^n \wt u_i - \text{Re}(\wt w)
\end{align*}
In particular, when there are only two vertex operators, the total degree of the homogeneous polynomial in the numerator is just
$$p_1 + p_2 + p_{12} + \text{Re}(\wt w') - \wt u_1 - \wt u_2 - \text{Re}(\wt w)$$
\end{rema}

\begin{prop}\label{ImmediateFacts-L}
Let $V$ be a MOSVA and $W$ be a left $V$-module. Then we have:
\begin{enumerate}
\item For $u\in V$, $Y_W^L(u,x)$ can be regarded as a formal series in $\text{End}(W)[[x,x^{-1}]]$
$$Y_W^L(u, x)= \sum_{n\in\Z} (Y_W^L)_n(u) x^{-n-1}$$
where $(Y_W^L)_n(u):W \to W$ is a linear map for every $n\in \Z$. If $u$ is homogeneous, then $(Y_W^L)_n(u)$ is a map of weight $\wt u - n- 1$. 
\item For fixed $u\in V, w\in W$, $Y_W^L(u, x)w$ is lower truncated, i.e, there are at most finitely many negative powers of $x$. 
\item $D$-conjugation property: for $u\in V$, 
$$Y_W^L(u, x+y) = Y_V(e^{yD_V}u, x) = e^{yD_W} Y_W^L(u, x) e^{-yD_W},$$
in $\text{End}(W)[[x,y, x^{-1}]]$. 
\item $\d$-conjugation property: for $u\in V$, 
$$  e^{y\d_W} Y_W^L(u, x) e^{-y \d_W} = Y_W^L(e^{y\d_W} u, xy)$$
in $\text{End}(W)[[x,x^{-1}, y, y^{-1}]]$. 
\end{enumerate}
\end{prop}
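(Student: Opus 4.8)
The plan is to transcribe the proof of Proposition~\ref{ImmediateFacts} almost line for line, replacing the MOSVA axioms with the corresponding left-module axioms from Definition~\ref{DefMOSVA-L}. For part~(1), I would use bilinearity of $Y_W^L$ in its two arguments to write $Y_W^L(u,x) = \sum_{n\in\Z}(Y_W^L)_n(u)x^{-n-1}$ with each $(Y_W^L)_n(u)$ a linear endomorphism of $W$; the weight assertion then comes from feeding a homogeneous $u$ into the $\d_W$-commutator formula, using the $\d$-grading condition $\d_W w = mw$ on $W_{[m]}$, and matching coefficients of powers of $x$. For part~(2), fix homogeneous $u\in V$ and $w\in W_{[m]}$; by part~(1) the coefficient $(Y_W^L)_n(u)w$ lies in $W_{[\wt u + m - n - 1]}$, and since $\wt u$ and $n$ are integers we have $\text{Re}(\wt u + m - n - 1)\to -\infty$ as $n\to +\infty$, so the lower bound condition on $W$ kills $(Y_W^L)_n(u)w$ for all large $n$; linearity extends this to arbitrary $u$ and $w$.

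For part~(3), I would first iterate the $D$-derivative property $\frac{d}{dx}Y_W^L(u,x) = Y_W^L(D_Vu,x)$ to get $\frac{d^{\,n}}{dx^{\,n}}Y_W^L(u,x) = Y_W^L(D_V^n u,x)$, then Taylor-expand in $y$ in the sense of Remark~\ref{Taylor} to obtain $Y_W^L(u,x+y) = \sum_{n\ge 0}\frac{y^n}{n!}Y_W^L(D_V^nu,x) = Y_W^L(e^{yD_V}u,x)$. The remaining equality is the exponentiated $D_W$-commutator formula: from $\frac{d}{dx}Y_W^L(u,x) = [D_W, Y_W^L(u,x)]$ one checks that $e^{-yD_W}Y_W^L(u,x+y)e^{yD_W}$ is independent of $y$ and hence equals $Y_W^L(u,x)$, which rearranges to the claim; part~(2) together with $D_W$ having weight $1$ makes all the exponential series converge in $\text{End}(W)[[x,x^{-1},y]]$. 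Part~(4) is the parallel exponentiation of the $\d_W$-commutator formula $[\d_W, Y_W^L(u,x)] = Y_W^L(\d_Vu,x) + x\frac{d}{dx}Y_W^L(u,x)$; here the extra term $Y_W^L(\d_Vu,x)$ on the right is precisely what produces the $\d_V$ acting on $u$ and the rescaling of the formal variable on the right-hand side.

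I do not expect a real obstacle: everything is formal manipulation and each step copies the corresponding step for MOSVAs. The only genuine difference from Proposition~\ref{ImmediateFacts} is bookkeeping: $W$ is $\C$-graded rather than $\Z$-graded, so in part~(2) the lower-truncation argument must be phrased using $\text{Re}$ of the weight rather than the weight itself, and throughout one must keep straight that $D_V,\d_V$ act on $V$ while $D_W,\d_W$ act on $W$, with $Y_W^L(\cdot,x)$ intertwining the two. There is also no module analogue of Proposition~\ref{ImmediateFacts}(3), since $W$ carries no vacuum vector.
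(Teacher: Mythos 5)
Your proposal is correct and follows exactly the route the paper intends: the paper's own proof simply says the argument is the same as for Proposition \ref{ImmediateFacts}, with the lower-truncation in part (2) using $W_{[m]}=0$ for $\mathrm{Re}(m)\ll 0$, which is precisely the bookkeeping adjustment you identify. No gaps.
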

\begin{proof}
Similar to the argument of Proposition \ref{ImmediateFacts}. For the second statement, use the fact that that $W_{[m]}= 0$ when Re $m << 0$. 
\end{proof}


\subsection{Right modules for MOSVAs}

\begin{defn}
Let $(V, Y_{V}, \one)$ be a meromorphic open-string vertex algebra.
A \textit{right module for $V$} 
is a $\C$-graded vector space 
$W=\coprod_{m\in \C}W_{[m]}$ (graded by \textit{weights}), equipped with 
a \textit{vertex operator map}
\begin{eqnarray*}
Y_W^R: W\otimes V&\to& W[[x, x^{-1}]]\\
w\otimes u&\mapsto& Y_W^R(w, x)u,
\end{eqnarray*}
an operator $\d_{W}$ and 
an operator $D_{W}$ of weight $1$, satisfying the 
following axioms:
\begin{enumerate}

\item Axioms for the grading
\begin{enumerate}
\item \textit{Lower bound condition}:  When $\text{Re }{m}$ is sufficiently negative, $W_{[m]}=0$. 
\item \textit{$\mathbf{d}$-grading condition}: for every $w\in W_{[m]}, \d_W w = m w$.
\item \textit{$\d$-commutator formula}: For $w\in W$, 
$$\d_{W}Y_W^R(w,x)-Y_W^R(w,x)\d_{V}= Y_W^R(\d_{W}w,x)+x\frac{d}{dx}Y_W^R(w,x).$$
\end{enumerate}

\item The \textit{Creation property}: For $w\in W$, $Y_W^R(w,x)\one\in W[[x]]$ and 
$\lim\limits_{x\to 0}Y_W^R(w,x)\one=w$.

\item The \textit{$D$-derivative property} and the  \textit{$D$-commutator formula}: 
For $u\in V$,
\begin{eqnarray*}
\frac{d}{dx}Y_W^R(w, x)
&=&Y_W^R(D_{W}w, x) \\
&=&D_{W}Y_W^R(w, x)-Y_W^R(w, x)D_{V}.
\end{eqnarray*}

\item \textit{Rationality}: For $u_{1}, \dots, u_{n}\in V, w\in W$
and $w'\in W'$, the series 
$$
\langle w', Y_W^R(w, z_1)Y_{V}(u_{1}, z_2)\cdots Y_{V}(u_{n-1}, z_n)u_{n}\rangle
$$
converges absolutely 
when $|z_1|>\cdots >|z_n|>0$ to a rational function in $z_{1}, \dots, z_{n}$
with the only possible poles at $z_{i}=0$ for $i=1, \dots, n$ and $z_{i}=z_{j}$ 
for $i\ne j$. For $u_{1}, u_{2}\in V, w\in W$
and $w'\in W'$, the series 
$$
\langle w', Y_W^R(Y_W^R(w, z_1-z_{2})u_{1}, z_2)u_{2}\rangle
$$
converges absolutely when $|z_{2}|>|z_{1}-z_{2}|>0$ to a rational function
with the only possible poles at $z_{1}=0$, $z_{2}=0$ and $z_{1}=z_{2}$. 

\item \textit{Associativity}: For $u_{1}, u_{2}\in V, w\in W$, 
$w'\in W'$, 
$$
\langle w', 
Y_W^R(w,z_1)Y_{V}(u_{1},z_2)u_{2}\rangle
=
\langle w', 
Y_W^R(Y_W^R(w,z_{1}-z_{2})u_{1},z_2)u_{2}\rangle
$$
when  $|z_{1}|>|z_{2}|>|z_{1}-z_{2}|>0$. 
\end{enumerate} 

A right $V$-module is said to be \textit{grading-restricted} if 
$\dim W_{[n]}<\infty$ for $n\in \C$. 

When there is no confusion, we also denote the right $V$-module just defined by $(W, Y_W^R, \d_{W}, D_{W})$ or simply $W$. 
\end{defn}

\begin{rema}
The right module is defined with the following philosophy: all the properties of intertwining operators of type $\binom{W}{WV}$ that make sense hold. With such a formulation, all the issues on convergence can be analyzed similarly as the vertex operators in a left $V$-module. 
\end{rema}

\begin{prop}\label{ImmediateFacts-R}
Let $V$ be a MOSVA and $W$ a right $V$-module. Then we have: 
\begin{enumerate}
\item For $u\in V$, $Y_W^R(\cdot,x)u$ can be regarded as a formal series in $\text{End}(W)[[x,x^{-1}]]$
$$Y_W^R(\cdot, x)u= \sum_{n\in\Z} (Y_W^R)_n(\cdot)u x^{-n-1}$$
where $(Y_W^R)_n(\cdot)u: W \to W$ is a linear map for every $n\in \Z$. If $u$ is homogeneous, then $(Y_W^R)_n(\cdot)u$ is a map of weight $\wt u - n- 1$. 
\item For fixed $u\in V$ and $w\in W$, $Y_W^R(w, x)u$ is lower truncated, i.e, there are at most finitely many negative powers of $x$. 
\item For $w\in W$, 
$$Y_W^R(w, x)\one = e^{xD_W} w$$ 
\item $D$-conjugation property: for $w\in V$, 
$$Y_W^R(w, x+y) = Y_W^R(e^{yD_W}w, x) = e^{yD_W} Y_W^R(w, x) e^{-yD_V},$$
in $\text{End}(W)[[x,y, x^{-1}]]$. 
\item $\d$-conjugation property: for $u\in V$, 
$$  e^{y\d_W} Y_W^R(w, x) e^{-y \d_V} = Y_W^R(e^{y\d_W} w, xy)$$
in $\text{End}(W)[[x,x^{-1}, y, y^{-1}]]$. 
\end{enumerate}
\end{prop}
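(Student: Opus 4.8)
The plan is to mirror, essentially verbatim, the proof of Proposition \ref{ImmediateFacts} and Proposition \ref{ImmediateFacts-L}, since Proposition \ref{ImmediateFacts-R} consists of the same structural facts transported to the right-module setting via the vertex operator map $Y_W^R(\cdot,x)u$. For item (1), I would observe that for fixed $u\in V$ the assignment $w\mapsto Y_W^R(w,x)u$ is linear in $w$ by the linearity built into the definition of $Y_W^R$, so we may write $Y_W^R(\cdot,x)u=\sum_{n\in\Z}(Y_W^R)_n(\cdot)u\,x^{-n-1}$ with each $(Y_W^R)_n(\cdot)u$ linear; the weight claim then follows from the $\d$-commutator formula for right modules together with the $\d$-grading condition, exactly as in Proposition \ref{ImmediateFacts}(1). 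Concretely, comparing the coefficient of $x^{-n-1}$ in $\d_W Y_W^R(w,x)u-Y_W^R(w,x)\d_V u=Y_W^R(\d_W w,x)u+x\frac{d}{dx}Y_W^R(w,x)u$ for homogeneous $w,u$ shows $(Y_W^R)_n(w)u$ lies in $W_{[\wt u+\wt w-n-1]}$ when $w$ is homogeneous, which is the asserted weight $\wt u-n-1$ as an operator.

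For item (2), I would argue that when $u$ and $w$ are homogeneous, the coefficient $(Y_W^R)_n(w)u$ of $x^{-n-1}$ has weight $\wt u+\wt w-n-1$, whose real part becomes arbitrarily negative as $n\to+\infty$; by the lower bound condition $W_{[m]}=0$ when $\mathrm{Re}(m)\ll 0$, so $(Y_W^R)_n(w)u=0$ for $n$ sufficiently large, giving lower truncation. This is the analogue of Proposition \ref{ImmediateFacts-L}(2), and the general (not necessarily homogeneous) case follows by linearity.

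For item (3), I would use the $D$-derivative property $\frac{d}{dx}Y_W^R(w,x)=Y_W^R(D_W w,x)$ and induction to show $\frac{d^n}{dx^n}Y_W^R(w,x)\one\big|_{x=0}=D_W^n w$ for all $n\ge 0$; here the creation property guarantees $Y_W^R(w,x)\one\in W[[x]]$ so that evaluation at $x=0$ makes sense and gives $w$ at order zero. Hence the Taylor coefficient of $x^n$ in $Y_W^R(w,x)\one$ is $D_W^n w/n!$, so $Y_W^R(w,x)\one=\sum_{n\ge 0}\frac{1}{n!}D_W^n w\,x^n=e^{xD_W}w$. For item (4), the first equality is the Taylor-expansion form of the $D$-derivative property (in the sense of Remark \ref{Taylor}, expanding powers of $x+y$ in $y$), and the second equality comes from exponentiating the $D$-commutator formula $\frac{d}{dx}Y_W^R(w,x)=D_W Y_W^R(w,x)-Y_W^R(w,x)D_V$; note that here the left and right conjugating operators differ ($D_W$ on the left, $D_V$ on the right), reflecting that $Y_W^R$ takes an argument in $V$. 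Item (5) is likewise obtained by exponentiating the $\d$-commutator formula for right modules, again with $\d_W$ on the left and $\d_V$ on the right.

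I do not expect any genuine obstacle: every ingredient is an axiom of a right $V$-module that is the exact counterpart of the axiom used in Propositions \ref{ImmediateFacts} and \ref{ImmediateFacts-L}. The only point requiring a little care is bookkeeping the asymmetry between $\d_W,D_W$ and $\d_V,D_V$ in the conjugation formulas (items (4) and (5))—one must track which side each operator acts on when exponentiating the commutator relations—but this is routine. Accordingly the proof reduces to the single sentence ``similar to the argument of Proposition \ref{ImmediateFacts}, using the lower bound condition $W_{[m]}=0$ for $\mathrm{Re}(m)\ll 0$ in the second statement and keeping track of the $V$- versus $W$-operators in the conjugation formulas.''
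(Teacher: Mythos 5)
Your proposal is correct and takes essentially the same route as the paper: items (1), (2), (4), (5) are verbatim transports of Proposition \ref{ImmediateFacts} using the right-module axioms (with the lower bound condition on $\mathrm{Re}(m)$ for (2) and the $V$-versus-$W$ operator bookkeeping for the conjugation formulas), and for (3) you correctly identify the one point needing care, namely that the creation property combined with the $D$-derivative property yields $\lim_{x\to 0}\frac{d^n}{dx^n}Y_W^R(w,x)\one=D_W^n w$, which is exactly the observation the paper makes before reducing to the argument of Proposition \ref{ImmediateFacts}.
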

\begin{proof}
The arguments for (1), (2), (4) and (5) are similar to those for Proposition \ref{ImmediateFacts}. To see (3), one first note that 
$$D_W w = \lim_{x\to 0} Y_W^R(D_Ww, x)\one = \lim_{x\to 0} \frac d{dx} Y_W^R(w, x)$$
(the first equality follows from the creation property, the second from $D$-derivative property), then apply the arguments in Proposition \ref{ImmediateFacts}. 
\end{proof}


\subsection{Bimodules for MOSVAs}

\begin{defn}
Let $(V, Y_V, \one)$ be a meromorphic open-string vertex algebra. Roughly speaking, a \textit{$V$-bimodule} is a vector space equipped with a left $V$-module structure and a right $V$-module structure such that these two strutcure are compatible. 
A \textit{$V$-bimodule} is a $\C$-graded vector space 
$$W=\coprod_{n\in \C}W_{[n]}$$
equipped with a \textit{left vertex operator map}
\begin{eqnarray*}
Y_{W}^{L}: V\otimes W&\to& W[[x, x^{-1}]]\\
u\otimes w&\mapsto& Y_{W}^{L}(u, x)v,
\end{eqnarray*}
a \textit{right vertex operator map}
\begin{eqnarray*}
Y_{W}^{R}: W\otimes V&\to& W[[x, x^{-1}]]\\
w\otimes u&\mapsto& Y_{W}^{R}(w, x)u,
\end{eqnarray*}
and linear operators $d_W, D_W$ on $W$ satisfying the following conditions.
\begin{enumerate}

 \item $(W, Y_W^L, \d_W, D_W)$ is a left $V$-module.

 \item $(W, Y_W^R, \d_W, D_W)$ is a right $V$-module.

\item \textit{Compatibility}:
\begin{enumerate}
\item 
\textit{Rationality of left and right vertex operator maps}: For $u_1, ..., u_n, u_{n+1}, ..., u_{n+m}\in V$, $w\in W$, the series
$$\langle w', Y_W^L(u_1, z_1) \cdots Y_W^L(u_n, z_n) Y_W^R(w, z_{n+1}) Y_V(u_{n+1}, z_{n+2})\cdots Y_V(u_{n+m-1}, z_{n+m})u_{n+m}\rangle$$
converges absolutely when $|z_1|>|z_2|>\cdots >|z_n| > |z_{n+1}| > \cdots > |z_{n+m}|> 0$ to a rational function in $z_1, ..., z_n, z_{n+1}, ..., z_{n+m}$. 
\item \textit{Associativity for left and right vertex operator maps}: For $u, v\in V$, $w\in W$ and $w'\in W'$, the series
$$\langle w', Y_W^L(u, z_1)Y_W^R(w,z_2)v\rangle$$
$$\langle w', Y_W^R(Y_W^L(u,z_1-z_2)w, z_2)v\rangle$$
converges absolutely when $|z_1|>|z_2|>0$ and $|z_2|>|z_1-z_2|>0$,  respectively, to a common rational function in 
$z_1$ and $z_2$ with the only possible poles at $z_1, z_2=0$ and $z_1 = z_2$.
\end{enumerate}
\end{enumerate}
\end{defn}

The $V$-bimodule just defined is denoted 
by  $(W, Y_W^L, Y_W^R, \d_W, D_W)$ or simply by $W$ when there is no confusion

\begin{rema}
It is possible to generalize the definition to allow the left and right module structure on $W$ to have different $\d$ and $D$ operators. We decide not to discuss such a generalization in this paper as we have no motivations for now. 
\end{rema}

\begin{rema}
Note that the compatibility conditions implies the rationality of products of any numbers of vertex operators in the left and right module structure. 
\end{rema}


\section{Formal Variable Approach and the Pole-Order Condition}

In this section we introduce a pole-order condition that is satisfied for all the current-existing examples of MOSVAs and modules. With this pole-order condition, we prove that the rationality of products of two vertex operators implies the rationality of products of any numbers of vertex operators. As an application, we introduce the weak associativity relation with pole-order condition, which is a sufficient condition formulated in terms of formal series for the rationality and associativity axioms 

\subsection{Formal variable approach and weak associativity} \label{FormFact-1}

Let $V$ be a MOSVA. We have seen in Remark \ref{FormFact}  that if we want 
$$x_1^{p_1}x_2^{p_2}(x_1-x_2)^{p_{12}}Y_V(u_1, x_1)Y_V(u_2, x_2)v\in V[[x_1, x_2]]$$
for some integers $p_1, p_2, p_{12}$, it is necessary that for every $v'\in V'$, the order of the pole $z_1=0$ is bounded above by a number that is independent of $v'$. Now we give a brief argument to show that this is sufficient. 

As a formal series, both $Y_V(u_2, x_2)v$ and $Y_V(u_1, x_0)u_2$ are lower truncated. So there exists an integer $p_2$ that depends only on $u_2$ and $v$, and an integer $p_{12}$ that depends only on $u_1$ and $u_2$, such that $$x_2^{p_2}Y_V(u_2, x_2)v \in V[[x_2]], x_0^{p_{12}}Y_v(u_1, x_0)u_2 \in V[[x_0]]$$ 
Thus the formal series
$$ x_2^{p_2}\langle v', Y_V(u_1, x_1)Y_V(u_2, x_2)v\rangle$$
has no negative powers of $x_2$, and the formal series
$$ x_0^{p_{12}} \langle v', Y_V(Y_V(u_1, x_0)u_2, x_2)v\rangle$$
has no negative powers of $x_0$. Then from associativity, we know that the series
$$ z_2^{p_2}(z_1-z_2)^{p_{12}}\langle v', Y_V(u_1, z_1)Y_V(u_2, z_2)v\rangle $$
converges absolutely to a rational function that has the only possible poles at $z_1=0$. 
Now we use the condition on the pole $z_1=0$. Let $p_1$ be an upper bound of the order that does not depend on $v'$, we see that
$$ z_1^{p_1}z_2^{p_2}(z_1-z_2)^{p_{12}}\langle v', Y_V(u_1, z_1)Y_V(u_2, z_2)v\rangle $$
gives a polynomial function on $\C$. Thus as formal series, 
$$x_1^{p_1}x_2^{p_2}(x_1-x_2)^{p_{12}}\langle v', Y_V(u_1, x_1)Y_V(u_2, x_2)v\rangle \in \C[x_1, x_2]$$
for every $v'\in V$. Therefore, as a formal series in $V[[x_1, x_1^{-1}, x_2, x_2^{-1}]]$, 
$$x_1^{p_1}x_2^{p_2}(x_1-x_2)^{p_{12}}Y_V(u_1, x_1)Y_V(u_2, x_2)v$$
has no negative powers of $x_1, x_2$. 

Also observe in this case that for every $u_1, u_2, v\in V$, for the $p_1$ chosen as above, we see from associativity that 
\begin{equation}\label{WeakAssoc}
(x_0+x_2)^{p_1}Y_V(u_1, x_0+x_2)Y_V(u_2, x_2)v = (x_0+x_2)^{p_1}Y_V(Y_V(u_1, x_0)u_2, x_2)v 
\end{equation}
as formal series in $V[[x_0, x_0^{-1}, x_2, x_2^{-1}]]$. We call Equation (\ref{WeakAssoc}) the \textit{weak associativity}.

In particular, this shows the following: if we use localized formal power series with coefficients in $W$ to formulate the rationality axiom (requiring that after multiplied by certain powers of $x_i, x_j, (x_i-x_j), 1\leq i < j \leq n$, the formal series $Y_V(u_1, x_1)\cdots Y_V(u_n, x_n)v$ has no negative powers) and associativity axiom (like weak associativity), it will \textit{not} be equivalent to Definition \ref{DefMOSVA}. Additional conditions on the order of the pole $z_1=0$ are implicitly assumed in the formal variable formulation. 

\subsection{Pole-order condition}\label{Pole-order}

For simplicity, we discuss only left modules for a MOSVA in this section.  

\begin{defn}\label{NSHTCL}
Let $V$ be a MOSVA. Let $W= \coprod\limits_{m\in \C} W_{[m]}, Y_W^L: V\otimes W \to W[[x, x^{-1}]], \d_W: W \to W$ satisfy axioms for gradings, rationality of products and iterates of two vertex operators and associativity in Definition \ref{DefMOSVA-L} . $Y_W^L$ is said to satisfy the \textit{pole-order condition}, if for every $w'\in W', u_1, u_2\in V, w\in W$, the order of the pole $z_1=0$ of the rational function that$\langle w', Y_W^L(u_1, z_1)Y_W^L(u_2, z_2)w\rangle$ converges to is bounded above by an integer that depends only on $u_1$ and $w$. 
\end{defn}

\begin{rema}\label{PoleCondWeakAssoc}
With the same notations and assumptions in Definition \ref{NSHTCL}, we see that for every $u_1, u_2\in V, w\in W$, $p_1$ appearing in the weak associativity 
$$(x_0+x_2)^{p_1}Y_W^L(u_1, x_0+x_2)Y_W^L(u_2, x_2)w = (x_0+x_2)^{p_1}Y_W^L(Y_V(u_1, x_0)u_2, x_2)w $$
can be chosen as an integer that depends only on $u_1$ and $w$. Conversely, if $W$ and $Y_W^L$ satisfy axioms for gradings, weak associativity with the choice of $p_1$ depending only on $u_1$ and $w$, then one can prove that $Y_W^L$ satisfies the rationality of products and iterates for two vertex operators, associativity and the pole-order condition. 
\end{rema}

\begin{rema}
Note that this condition holds automatically when the commutativity holds. Therefore for vertex algebras, we don't need any extra condition to have a formal variable formulation. 
\end{rema}

\subsection{Rationality of products of any numbers of vertex operators}

The pole-order condition in Definition \ref{NSHTCL} is interesting, because together with some other conditions, it implies the rationality of products of more than two vertex operators. More precisely, we have the following theorem:

\begin{thm}
Let $V$ be a MOSVA with $Y_V$ satisfying the pole-order condition in Definition \ref{NSHTCL}. Let $W = \coprod_{n\in \C} W_{[n]}$, $Y_W^L: V\otimes W \to W[[x, x^{-1}]], \d_W: W \to W, D_W: W \to W$ satisfy the axioms for the grading, the $D$-derivative and $D$-commutator properties, rationality of products and iterates of two vertex operators, associativity, and the pole-order condition in Definition \ref{NSHTCL}. Then rationality of products holds for any numbers of vertex operators. More precisely, for every $u_1, ..., u_n\in V, w'\in W', w\in W$, the series
$$\langle w', Y_W^L(u_1, z_1)\cdots Y_W^L(u_n, z_n)w\rangle$$
converges absolutely when $|z_1|>\cdots > |z_n| >0$ to a rational function with the only possible poles at $z_i=0, i=1, ..., n$ and $z_i=z_j$. Moreover, for each $i=1, ..., n$, the order of the pole $z_i=0$ is bounded above by an integer that depends only on $u_i$ and $w$; for each $i, j$ with $1\leq i < j \leq n$, the order of the pole $z_i=z_j$ is bounded above by an integer that depends only on $u_i$ and $u_j$.  
\end{thm}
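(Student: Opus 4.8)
The plan is to prove the rationality of products of $n$ vertex operators by induction on $n$, the base case $n=2$ being given by hypothesis. The key idea is that associativity for two operators, iterated appropriately, lets one "fuse" adjacent vertex operators, while the pole-order condition is exactly what is needed to keep the order of the pole at $z_i = 0$ under control during the fusion, so that the localized formal-series picture discussed in Section~\ref{FormFact-1} actually applies.

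\medskip

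First I would set up the inductive step. Assume the statement for $n-1$ operators, with the stated bounds on the pole orders. Given $u_1, \dots, u_n \in V$, $w \in W$, $w' \in W'$, consider the series $\langle w', Y_W^L(u_1, z_1) \cdots Y_W^L(u_n, z_n) w \rangle$ in the domain $|z_1| > \cdots > |z_n| > 0$. I would first treat $Y_W^L(u_{n-1}, z_{n-1}) Y_W^L(u_n, z_n) w$: applying the $n=2$ rationality and associativity hypotheses (for the module vertex operators, with $w$ replaced by a general vector and $u_{n-1}, u_n$), together with the pole-order condition, there exist $p$ depending only on $u_{n-1}$ and on $u_n$ (the pole at $z_{n-1} = z_n$), and weak associativity holds: after multiplying by $(z_{n-1} - z_n)^p$ one may replace the product $Y_W^L(u_{n-1}, z_{n-1})Y_W^L(u_n, z_n)w$ by $Y_W^L(Y_V(u_{n-1}, z_{n-1} - z_n)u_n, z_n)w$ as a convergent expansion. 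Since $Y_V(u_{n-1}, x_0)u_n \in V((x_0))$, each coefficient is a vector in $V$, and the prefix $Y_W^L(u_1, z_1)\cdots Y_W^L(u_{n-2}, z_{n-2})$ applied to $Y_W^L(\,\cdot\,, z_n)w$ is, coefficient by coefficient, a product of $n-1$ module vertex operators, to which the induction hypothesis applies. One then needs to check that summing over the (lower-truncated, but possibly infinite in the positive direction) expansion of $Y_V(u_{n-1}, z_1 - z_2)u_n$ still converges; here the grading and $\d$-commutator force the summands to organize by weight, and in the region where $|z_{n-1} - z_n|$ is small compared to $|z_n|$ the geometric decay gives absolute convergence, yielding a rational function with the claimed poles (those at $z_{n-1}=0$ and $z_{n-1}=z_j$ reappear as poles of the expansion parameter, controlled by the pole-order condition). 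Finally one analytically continues from the subdomain $|z_{n-1} - z_n|$ small back to the full region $|z_1| > \cdots > |z_n| > 0$, using that both sides are restrictions of rational functions and agree on an open set.

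\medskip

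The pole-order bounds in the conclusion are then tracked through this argument: the pole at $z_i = z_j$ for $i, j \le n-2$ and the pole at $z_i = 0$ for $i \le n-2$ come from the induction hypothesis with unchanged bounds; the pole at $z_{n-1} = z_n$ is controlled by the $n=2$ case; the pole at $z_{n-1} = 0$ is controlled by the pole-order condition applied to the fused operator, but one must verify that the relevant bound depends only on $u_{n-1}$ (and $w$, through the inductively produced vector) and not on $w'$ — this uses that $Y_V$ itself satisfies the pole-order condition, so that the order of $z_1 = 0$ in $\langle v', Y_V(u_{n-1}, z_1)Y_V(u_n, z_2)v \rangle$ is bounded by something depending only on $u_{n-1}$, hence after fusion the pole order is controlled uniformly. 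A symmetric bookkeeping handles the pole $z_i = 0$ for $i = n$ and $z_j = z_n$.

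\medskip

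I expect the main obstacle to be the convergence and uniformity bookkeeping rather than any conceptual difficulty: one must argue that, after fusing $u_{n-1}$ and $u_n$, the formal series one gets really is the expansion of a single rational function on the larger domain, which requires combining (i) absolute convergence of the $(n-1)$-fold product from the induction hypothesis, (ii) absolute convergence of the expansion of $Y_V(u_{n-1}, z_{n-1} - z_n)u_n$ in $z_{n-1} - z_n$ near $z_n$, and (iii) a uniform-in-$w'$ pole-order bound at $z_{n-1} = 0$ so that the resulting function, a priori only defined as a power series in $z_{n-1}$ near a punctured neighborhood of $0$, extends meromorphically with controlled pole order. Care is also needed that the integer $p_1$ governing the pole at $z_{n-1} = 0$ can be chosen independently of $w'$; this is precisely why the hypothesis that $Y_V$ (not just $Y_W^L$) satisfies the pole-order condition is invoked. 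Once these points are in place, the analytic continuation and the permutation of the remaining operators (to get arbitrary orderings if desired) are routine.
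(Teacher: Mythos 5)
Your overall strategy---induct on $n$, fuse two adjacent operators via associativity, and invoke the pole-order condition for uniformity---is in the right spirit, but the specific fusion you chose creates a gap that the pole-order condition cannot close. After replacing $Y_W^L(u_{n-1},z_{n-1})Y_W^L(u_n,z_n)w$ by $Y_W^L(Y_V(u_{n-1},z_{n-1}-z_n)u_n,z_n)w$, you must sum, over the infinitely many coefficients $v_k=(Y_V)_k(u_{n-1})u_n$, the rational functions $f_k(z_1,\dots,z_{n-2},z_n)$ supplied by the induction hypothesis, weighted by $(z_{n-1}-z_n)^{-k-1}$. The step ``the geometric decay gives absolute convergence, yielding a rational function'' is the gap: absolute convergence of a sum of rational functions does not make the sum rational, and here there is not even a common denominator, because the induction hypothesis bounds the order of the pole $z_n=0$ of $f_k$ only in terms of $v_k$ and $w$, and the order of $z_i=z_n$ only in terms of $u_i$ and $v_k$; since $\wt v_k=\wt u_{n-1}+\wt u_n-k-1$ is unbounded as $k\to-\infty$, these orders genuinely grow with $k$. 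The pole-order condition of Definition \ref{NSHTCL} controls the pole of the \emph{outermost} variable at $0$ uniformly in the \emph{second} argument; in your configuration the varying vector $v_k$ sits in the innermost slot, which is exactly the position the condition does not reach (your item (iii) locates the needed uniformity at $z_{n-1}=0$ and in $w'$, but the real obstruction is uniformity in $k$ at $z_n=0$ and $z_i=z_n$). Nor can you appeal to the resummation device used elsewhere in the paper (Lemma \ref{IterSeries}), since it presupposes that the limit function is already known to be rational---which is the very statement under proof.

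The paper avoids this by a different decomposition. It shifts variables, writing the product at the points $x_1+x_3,\ x_2+x_3,\ x_3$ (for $n=3$), and fuses \emph{every} operator into the last one from the inside out, arriving at $\langle w', Y_W^L(Y_V(u_1,x_1)Y_V(u_2,x_2)u_3,x_3)w\rangle$; the pole-order condition is invoked precisely so that the power of $(x_1+x_3)$ needed in the second fusion depends only on $u_1$ and $w$, uniformly over the infinitely many coefficients of $Y_V(u_2,x_2)u_3$. A weight comparison between $w'$ and $w$ then shows the resulting series is \emph{upper}-truncated in $x_3$, so after clearing a fixed set of denominators only finitely many terms survive, each a Laurent polynomial by two-operator rationality. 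Rationality is thus obtained as polynomiality of a formal series rather than as convergence of an infinite sum of rational functions, and this finiteness input is what your argument lacks. To salvage your route you would need to reorganize the fusion so that the varying intermediate vectors occupy the second slot of a two-operator configuration whose first-variable pole is governed by the pole-order condition, and supply a truncation argument in the remaining variable.
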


\begin{proof}
We first prove the rationality of the product of three vertex operators. Without loss of generality, let $w'\in W',u_1, u_2, u_3\in V, w\in W$ be homogeneous elements. We claim that for some positive integers $p_1, p_2, p_{12}$, 
$$(x_1-x_2)^{p_{12}}(x_1+x_3)^{p_{1}}(x_2+x_3)^{p_2}\langle w', Y_W^L(u_1, x_1+x_3)Y_W^L(u_2, x_2+x_3) Y_W^L(u_3, x_3)w\rangle, $$
as a series in $\C[[x_1, x_1^{-1}, x_2,  x_2^{-1}, x_3,  x_3^{-1}]]$ with all negative powers of $(x_1+x_3)$ and $(x_2+x_3)$ expanded in positive powers of $x_3$, is both upper and lower-truncated. In other words, it is indeed a Laurent polynomial in $\C[x_1, x_1^{-1}, x_2,  x_2^{-1}, x_3,  x_3^{-1}]$. 

We start by peeling off the variable $x_3$. First note the power of $x_3$ is lower-truncated. To see it is also upper-truncated, let $p_{12}$ be an integer such that
$$(x_1-x_2)^{p_{12}} Y_W^L(u_1, x_1)Y_W^L(u_2, x_2)w\in W[[x_1, x_2]][x_1^{-1}, x_2^{-1}]$$
From the discussion in Section \ref{FormFact-1}, we know that $p_{12}$ depends only on $u_1$ and $u_2$. Then we use the weak associativity to find integers $p_1$ and $p_2$, such that 
$$(x_1+x_3)^{p_1}Y_W^L(u_1, x_1+x_3)Y_W^L(u_3, x_3)w = (x_1+x_3)^{p_1} Y_W^L(Y_V(u_1, x_1)u_3, x_3)w$$
as formal series in $W[[x_1, x_1^{-1}, x_3, x_3^{-1}]]$, and 
$$(x_2+x_3)^{p_2}Y_W^L(u_2, x_2+x_3)Y_W^L(u_3, x_3)w = (x_2+x_3)^{p_1} Y_W^L(Y_V(u_2, x_2)u_3, x_3)w$$
as formal series in $W[[x_2, x_2^{-1}, x_3, x_3^{-1}]]$. With these relations, we compute as follows: 
\begin{align}
& (x_1-x_2)^{p_{12}}
(x_1+x_3)^{p_{1}}(x_2+x_3)^{p_2}\langle w', Y_W^L(u_1, x_1+x_3)Y_W^L(u_2, x_2+x_3) Y_W^L(u_3, x_3)w\rangle \label{n-Rat-1}\\
= & (x_1-x_2)^{p_{12}}
(x_1+x_3)^{p_{1}}(x_2+x_3)^{p_2}\langle w', Y_W^L(u_1, x_1+x_3)Y_W^L(Y_V(u_2, x_2)u_3, x_3)w\rangle  \label{n-Rat-2}\\
= & (x_1-x_2)^{p_{12}}
(x_1+x_3)^{p_{1}}(x_2+x_3)^{p_2}\langle w', Y_W^L(Y_V(u_1, x_1)Y_V(u_2, x_2)u_3, x_3)w\rangle \label{n-Rat-3}
\end{align}
where $p_{12}$ is an integers that depends only on $u_1$ and $u_2$.
It is crucial to note that the second equality is guaranteed by the pole-order condition: since $p_1$ depends only on $u_1$ and $w$, we don't need to worry about the infinitely many terms given by $Y_V^L(u_2, x_2)u_3$. If we assume the weaker condition that $p_1$ is independent of only $w'$, the equality might not hold. 

We observe that the series (\ref{n-Rat-3}) is upper-truncated in $x_3$. This can be seen by writing the series (\ref{n-Rat-3}) as 
$$\langle w', Y_W^L(Y_V(u_1, u_1)Y_V(u_2, x_2)u_3, x_3)w = \sum_{m, n}\langle w', Y_W^L(u_{mn}, x_3)w\rangle x_1^{-m-1}x_2^{-n-1}$$
The coefficient of $x_3^{-p-1}$ in $Y_W^L(u_{mn}, x_3)w$ is nonzero only when $\wt u_{mn} + \wt w - p-1 = \wt w'$. Thus 
$$-p-1 = \wt w' - \wt w - \wt u_{mn}$$
As $\wt w'$ and $\wt w$ are fixed, and $\wt u_{mn}$ is bounded below, $-p-1$ is then bounded above. Therefore, the powers of $x_3$ in series (\ref{n-Rat-3}), and thus in series (\ref{n-Rat-1}) and (\ref{n-Rat-2}), are upper-truncated. 

So for the series (\ref{n-Rat-1}), if we expand all the negative powers of $x_1+x_3$ and $x_2+x_3$ as power series in $x_3$ (using $D$-conjugation property from Part (4) of Proposition \ref{ImmediateFacts-L}), 
\begin{align*}
& (x_1-x_2)^{p_{12}}(x_1+x_3)^{p_{1}}(x_2+x_3)^{p_2}\langle w', e^{x_3 D_V}Y_W^L(u_1, x_1)Y_W^L(u_2, x_2)e^{-x_3 D_V} Y_W^L(u_3, x_3)w\rangle\\
= & (x_1-x_2)^{p_{12}}\sum_{k_1=0}^{p_1}\binom{p_1}{k_1} x_1^{p_1-k_1}x_3^{k_1}\sum_{k_2=0}^{p_2}\binom{p_2}{k_2}x_2^{p_2-k_2}x_3^{k_2}\cdot\\
& \qquad \langle w', \sum_{i=1}^\infty \frac 1 {i!}x_3^i D_V^i  Y_W^L(u_1, x_1)Y_W^L(u_2, x_2)\sum_{j=1}^\infty\frac 1 {j!}(-x_3)^j D_V^j \sum_{m\in \Z}(Y_W^L)_m(u_3)w x_3^{-m-1} \rangle
\end{align*}
then since the resulted series has only finitely many positive powers of $x_3$, in all these summations only finitely many terms can survive. And for each fixed $m, p_1, p_2, i, j$, we know that
$$(x_1-x_2)^{p_{12}}\langle w',  D_V^i  Y_W^L(u_1, x_1)Y_W^L(u_2, x_2) D_V^j (Y_W^L)_{m+k_1+k_2+i+j}(u_3)w \rangle$$
is a Laurent polynomial in $x_1, x_2$. So the series is a finite sum of such Laurent polynomials in $x_1, x_2$ multiplied with finitely many powers $x_3$. Thus the claim is proved. 

Since all the powers of $x_1, x_2, x_3$ are lower-truncated, we can find positive integers $p_{13}, p_{23}, p_3$ such that
$$(x_1+x_3)^{p_1} (x_2+x_3)^{p_2} x_3^{p_3} x_1^{p_{13}}x_2^{p_{23}}(x_1-x_2)^{p_{12}}\langle w', Y_W^L(u_1, x_1+x_3)Y_W^L(u_2, x_2+x_3)Y_W^L(u_3, x_3)w\rangle$$
is a polynomial in $\C[x_1, x_2, x_3]$. 
From the lower truncation of the series $Y_W^L(u_3, x_3)w$, we can choose $p_3$ to be an integer depending only on $u_3$ and $w$. From the associativity and the discussion in Remark \ref{FormFact}, we can choose $p_{23}$ to be an integer depending only on $u_2$ and $u_3$. For the integer $p_{1}$, we use associativity to write the above polynomial as 
\begin{align*}
& (x_1+x_3)^{p_1} (x_2+x_3)^{p_2} x_3^{p_3} x_1^{p_{13}}x_2^{p_{23}}(x_1-x_2)^{p_{12}}\langle w', Y_W^L(u_1, x_1+x_3)Y_W^L(Y_V(u_2, x_2)u_3, x_3)w\rangle\rangle\\
=& (x_1+x_3)^{p_1} (x_2+x_3)^{p_2} x_3^{p_3} x_1^{p_{13}}x_2^{p_{23}}(x_1-x_2)^{p_{12}}\sum_{n\in \Z} \langle w', Y_W^L(u_1, x_1)Y_W^L((Y_W^L)_n(u_2)u_3, x_3)w\rangle x_2^{-n-1}
\end{align*}
For each summand, we apply the pole-order condition to find an upper bound of the order of pole of $x_1=0$ that depends only on $u_1$ and $w$. In particular, this upper bound is uniform to all $n$. Thus we see that $p_{13}$ can be chosen as an integer that depends only on $u_1$ and $u_3$. 

Finally, we apply the transformation $x_1 \mapsto x_1-x_3, x_2 \mapsto x_2-x_3, x_3\mapsto x_3$ for the polynomial to see that
$$x_1^{p_1}x_2^{p_2}x_3^{p_3}(x_1-x_2)^{p_{12}}(x_1-x_3)^{p_{13}}(x_2-x_3)^{p_{23}}\langle w', Y_W^L(u_1, x_1)Y_W^L(u_2, x_2)Y_W^L(u_3, x_3)w\rangle$$
is a polynomial in $\C[x_1, x_2, x_3]$, in which the integer $p_i$ depend only on $u_i$ and $w$ for $i=1, 2, 3$, and the integer $p_{ij}$ depend only on $u_i$ and $u_j$ for $1\leq i < j \leq 3$. Thus the series $\langle w', Y_W^L(u_1, z_1)Y_W^L(u_2, z_2)Y_W^L(u_3, z_3)w\rangle$ is the expansion of a rational function with the only possible poles at $z_1=0, z_2=0, z_3=0, z_1=z_2, z_1=z_3, z_2=z_3$ in the region $\{(z_1, z_2, z_3)\in \C^3: |z_1|>|z_2|>|z_3|>0\}$.  

So we proved the rationality of products of three vertex operators based on that of two. For more vertex operators, the rationality can be proved by induction, where the inductive step can easily be modified from the above arguments. 
\end{proof}

\subsection{Formal variable formulation} In regards of Remark \ref{PoleCondWeakAssoc}, we have the following theorem: 

\begin{thm}\label{Formal-left}
Let $V$ be a MOSVA, Let $W = \coprod_{n\in \C} V_{[n]}$, $Y_W^L: V\otimes W \to W[[x, x^{-1}]], \d_W: W\to W$ of weight 0, and $D_W: W\to W$ of weight 1 satisfy axioms for the grading, $D$-derivative property, $D$-commutator formula, and the following \textit{weak associativity with pole-order condition}: for every $u_1, u_2\in V$, $w\in W$, there exists an integer $p_1$ that depends only on $u_1$ and $w$, such that
$$(x_0+x_2)^{p_1}Y_W^L(Y_V(u_1, x_0)u_2, x_2)w = (x_0+x_2)^{p_1} Y_W^L(u_1, x_0+x_2)Y_W^L(u_2, x_2)w$$
as formal series in $W[[x_0, x_0^{-1}, x_2, x_2^{-1}]]$, then 
$(W, Y_W^L, \d_W, D_W)$ forms a left $V$-module, with $Y_W^L$ satisfying the pole-order condition. 
\end{thm}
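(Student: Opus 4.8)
The plan is to derive the axioms in Definition~\ref{DefMOSVA-L} from the hypotheses, treating the rationality/associativity axioms as the substantive content and the remaining axioms (grading, identity, $D$-properties) as either hypotheses or quick consequences. The identity property $Y_W^L(\one, x) = 1_W$ should follow by setting $u_1 = \one$ in the weak associativity relation and using $Y_V(\one, x_0)u_2 = u_2$ together with the creation-type manipulations, or one may simply add it to the hypotheses as is implicitly standard; I would check the precise bookkeeping here but expect no difficulty. So the real work is to extract, from ``weak associativity with pole-order condition,'' the three analytic statements: (i) rationality of products of two vertex operators, (ii) rationality of iterates of two vertex operators, (iii) associativity, plus the pole-order condition itself.

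First I would establish (i). Fix homogeneous $u_1, u_2 \in V$, $w \in W$, $w' \in W'$. Since $Y_V(u_1, x_0)u_2$ is lower-truncated in $x_0$, there is $p_{12}$ depending only on $u_1, u_2$ with $x_0^{p_{12}} Y_V(u_1, x_0)u_2 \in V[[x_0]]$; since $Y_W^L(u_2, x_2)w$ is lower-truncated, there is $p_2$ depending only on $u_2, w$ with $x_2^{p_2} Y_W^L(u_2, x_2)w \in W[[x_2]]$. Now take the $p_1$ from weak associativity (depending only on $u_1, w$) and substitute $x_0 = x_1 - x_2$ via the $D$-conjugation property (Part (4) of Proposition~\ref{ImmediateFacts-L}) to expand $Y_W^L(u_1, x_1) = Y_W^L(u_1, (x_1-x_2) + x_2) = e^{(x_1-x_2)D_W}Y_W^L(u_1,x_1)\cdots$ --- more precisely, the standard argument showing $x_1^{p_1} x_2^{p_2}(x_1-x_2)^{p_{12}} Y_W^L(u_1,x_1)Y_W^L(u_2,x_2)w$ lies in $W[[x_1,x_2]][x_1^{-1}]$ and in fact has no negative powers of $x_1$ once we invoke: after multiplying by $(x_1-x_2)^{p_{12}}$ the iterate side $(x_1-x_2)^{p_{12}}Y_W^L(Y_V(u_1,x_1-x_2)u_2,x_2)w$ involves only nonnegative powers of $(x_1-x_2)$, and multiplying further by $x_2^{p_2}$ kills the negative $x_2$-powers; then re-expanding in $x_1, x_2$ and multiplying by $x_1^{p_1}$ clears the remaining $x_1$-pole because $p_1$ was chosen uniformly (this is exactly the argument sketched in Section~\ref{FormFact-1}, run in reverse). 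This shows the matrix coefficient is a Laurent polynomial after multiplication by $x_1^{p_1}x_2^{p_2}(x_1-x_2)^{p_{12}}$, hence the series $\langle w', Y_W^L(u_1,z_1)Y_W^L(u_2,z_2)w\rangle$ converges in $|z_1|>|z_2|>0$ to a rational function with poles only at $z_1 = 0, z_2 = 0, z_1 = z_2$, with the pole order at $z_1 = 0$ bounded by $p_1$ --- which is the pole-order condition. For (ii), the iterate side is handled similarly: $x_2^{p_2'}Y_W^L(Y_V(u_1,x_0)u_2,x_2)w$ and $x_0^{p_{12}}Y_V(u_1,x_0)u_2$ are lower-truncated, so $x_0^{p_{12}} x_2^{p_2'}\langle w', Y_W^L(Y_V(u_1,x_0)u_2,x_2)w\rangle$ is a Laurent polynomial in $x_0, x_2$; substituting $x_0 = z_1 - z_2$ gives convergence in $|z_2| > |z_1 - z_2| > 0$ to a rational function with the required poles. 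For (iii), associativity: both sides, as I have just shown, are expansions of rational functions; weak associativity with the common $p_1$ says that after multiplying by $(x_1-x_2)^{p_1}$ the two formal series agree in $W[[x_0,x_0^{-1},x_2,x_2^{-1}]]$ (with $x_0 = x_1 - x_2$), hence the two rational functions agree on the overlap region $|z_1| > |z_2| > |z_1 - z_2| > 0$, giving the associativity axiom.

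The step I expect to be the main obstacle is the interchange-of-expansions argument in (i): passing from the formal identity in $W[[x_0,x_0^{-1},x_2,x_2^{-1}]]$ to a statement about a genuine rational function in the complex domain, and in particular justifying that no spurious $x_1 = 0$ pole of order exceeding $p_1$ can appear after one re-expands $(x_1 - x_2)^{-p_{12}}$ and the negative powers of $x_2$ back into powers of $x_1$ and $x_2$. This requires care because the re-expansion of a single negative power of $(x_1-x_2)$ produces infinitely many terms, and one must argue --- exactly as in the three-operator argument in the preceding theorem, where the uniformity of $p_1$ over the infinitely many terms of $Y_V(u_2,x_2)u_3$ is flagged as "crucial" --- that the uniformity of $p_1$ in $w'$ (equivalently, the pole-order bound being independent of $w'$) makes the resulting doubly-infinite sum well-defined coefficient-by-coefficient and still bounded in $x_1$-degree from below by $-p_1$. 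Once this bookkeeping is pinned down, the rest is routine: the $D$-derivative property and $D$-commutator formula are hypotheses, the grading axioms are hypotheses, and the identity property is a short check. I would organize the write-up so that the delicate re-expansion lemma is isolated and stated once, then invoked for both the two-operator rationality and (by citing the previous theorem) for products of arbitrarily many operators, so that $(W, Y_W^L, \d_W, D_W)$ satisfies every clause of Definition~\ref{DefMOSVA-L}.
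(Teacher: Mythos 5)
Your proposal is correct and takes essentially the approach the paper intends: the paper supplies no separate proof of Theorem \ref{Formal-left}, deferring entirely to the discussion in Section \ref{FormFact-1} and Remark \ref{PoleCondWeakAssoc}, which is precisely the ``argument run in reverse'' you describe, combined with the preceding theorem for products of three or more operators. The two loose ends you flag or implicitly rely on --- that the identity property must in effect be added to the hypotheses (it does not follow from weak associativity alone), and that citing the $n$-operator rationality theorem requires $Y_V$ itself to satisfy the pole-order condition --- are gaps in the theorem's statement as printed rather than in your argument.
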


\begin{rema}
In particular, for the MOSVA itself, its right modules and bimodules, we can formulate similar theorems. Since these are similar, we omit the details.  
\end{rema}

\begin{rema}
We see that that the pole-order condition is crucial for the formal variable approach. But with the absence of commutativity, it might be unnatural to assume. For the other part of the paper, we will still develop the theory of MOSVA using complex analysis. A stronger version of the pole-order condition will be used in Section 6 in the construction of contragredient modules. 
\end{rema}


\section{Rationality of Iterates of Any Numbers of Vertex Operators}

In this section we study the rationality of iterates of any numbers of vertex operators. We will prove that the iterates converge absolutely to the same rational functions as products do. And we will explicitly specify the region of convergence. None of the arguments here rely on the pole-order conditions in Section \ref{Pole-order}. So the conclusions in this section apply to the MOSVAs in the most general sense.

\subsection{A note on complex analysis}

\begin{defn}
A \textit{multicircular domain} $T\subseteq \C^n$ (centered at the origin) is an open subset such that 
$$(z_1, ..., z_n)\in T \text{ implies }(z_1e^{i\theta_1}, ..., z_n e^{i\theta_n})\in T$$
for every $\theta_1, ..., \theta_n \in \R$. The \textit{trace} of a multicircular domain $T\subset \C$ is given by 
$$\tr T = \{(|z_1|, ..., |z_n|)\in \R^n_+: (z_1, ..., z_n)\in T\}$$
\end{defn}

We need the following results in several complex variable functions (see for example \cite{SCV}

\begin{lemma}\label{SVC-Connected}
A multicircular domain $S\subset \C^n$ is connected if and only if $\tr S \subset \R^n_+$ is connected.
\end{lemma}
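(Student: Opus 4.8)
The plan is to deduce both implications from the single topological fact that a continuous image of a connected space is connected; the multicircular structure will enter only through the construction of a convenient surjection onto $S$, so in particular the openness of $S$ will play no role.

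For the forward direction, I would simply observe that $\tr S$ is by definition the image of $S$ under the modulus map $\pi : \C^n \to \R^n_+$, $\pi(z_1, \ldots, z_n) = (|z_1|, \ldots, |z_n|)$, which is continuous. Hence if $S$ is connected, so is $\tr S$; neither rotation-invariance nor openness is needed here.

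For the reverse direction, I would build a continuous surjection $\Phi : \R^n \times \tr S \to S$ by
$$\Phi(\theta_1, \ldots, \theta_n, s_1, \ldots, s_n) = (s_1 e^{i\theta_1}, \ldots, s_n e^{i\theta_n}).$$
The one step needing care — and the only place the multicircular hypothesis is used — is checking that $\Phi$ actually lands in $S$: given $(s_1, \ldots, s_n) \in \tr S$, choose $(w_1, \ldots, w_n) \in S$ with $|w_j| = s_j$ and write $w_j = s_j e^{i\alpha_j}$ (with $\alpha_j = 0$ when $s_j = 0$); then $(s_1 e^{i\theta_1}, \ldots, s_n e^{i\theta_n}) = (w_1 e^{i(\theta_1 - \alpha_1)}, \ldots, w_n e^{i(\theta_n - \alpha_n)}) \in S$ by the defining rotation-invariance of $S$. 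Surjectivity is immediate: any $z \in S$ equals $\Phi$ evaluated at any choice of arguments of the coordinates $z_j$ together with $\pi(z) \in \tr S$. Continuity of $\Phi$ is clear coordinatewise.

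Finally, since $\R^n$ is connected and $\tr S$ is connected by hypothesis, the product $\R^n \times \tr S$ is connected, hence so is its continuous image $\Phi(\R^n \times \tr S) = S$. The only obstacle is thus purely bookkeeping — verifying well-definedness of $\Phi$ and handling vanishing coordinates — and there is no analytic content; if one instead wanted \emph{path}-connectedness, one would additionally invoke openness of $S$ (equivalently of $\tr S$) together with the fact that open connected subsets of Euclidean space are path-connected.
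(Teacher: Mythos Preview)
Your proof is correct. The paper's own proof is a single sentence---``This follows from the continuity of the map $(z_1,\dots,z_n)\mapsto(|z_1|,\dots,|z_n|)$''---which is exactly your forward direction and leaves the converse implicit. Your construction of the surjection $\Phi:\R^n\times\tr S\to S$ supplies precisely the argument the paper omits for the reverse implication; the approaches agree where they overlap, and yours simply fills in the missing half.
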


\begin{proof}
This follows from the continuity of the map $(z_1, ..., z_n)\mapsto (|z_1|, ..., |z_n|)$.
\end{proof}

Given $r=(r_1, ..., r_n)\in \tr T$, we use $\gamma(0, r)$ to denote the multicircle $(r_1e^{i\theta_1}, \cdots r_ne^{i\theta_n})$, where $\theta_i \in \R, i = 1, ..., n$.  

\begin{thm}\label{SVC-ref}
Let $S$ be a connected multicircular domain. 
Let $f$ be a holomorphic function on $S$. Then there is a unique Laurent series in $z_1, ..., z_n$ with center 0 which converges absolutely to $f$ at every point of $S$.
It is the Laurent series 
$$\sum_{\alpha_1\in\Z,...,\alpha_n\in\Z} c_{\alpha_1...\alpha_n}z_1^{-\alpha_1-1}\cdots z_n^{-\alpha_n-1}$$
whose coefficients are given by the formula
$$c_{\alpha_1...\alpha_n} =\frac 1
{(2\pi i)^n}\idotsint_{\gamma (0,r)}f(z_1, ..., z_n)z_1^{\alpha_1}\cdots z_n^{\alpha_n}dz_1 \cdots dz_n$$
for any $r = (r_1, . . . , r_n) > 0$ in the trace of $S$. The series converges uniformly to $f$ on any compact subset of $S$.

\end{thm}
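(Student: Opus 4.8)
The plan is to reduce the $n$-variable statement to the classical one-variable Laurent expansion by a slicing argument, then patch the slices together using connectedness of the trace. First I would fix a basepoint $r=(r_1,\dots,r_n)\in\tr S$ with all $r_i>0$, and define the candidate coefficients $c_{\alpha_1\dots\alpha_n}$ by the iterated contour integral over the multicircle $\gamma(0,r)$ as in the statement. The immediate task is to check that this integral does not depend on the choice of $r$ in $\tr S$: for a single variable, holding the others fixed, the integrand $f(z_1,\dots,z_n)z_1^{\alpha_1}\cdots z_n^{\alpha_n}$ is holomorphic in an annulus whose trace is an interval, so Cauchy's theorem gives independence of $r_i$ locally; since $\tr S$ is connected (Lemma \ref{SVC-Connected} is the relevant tool, together with the fact that on a multicircular domain each coordinate slice through a point of $S$ is itself open and circular), a chain of such local deformations shows $c_{\alpha_1\dots\alpha_n}$ is globally well defined.

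Next I would establish absolute convergence and the identity with $f$ on a full-dimensional polyannulus $A(r)=\{(z_1,\dots,z_n): r_i-\varepsilon<|z_i|<r_i+\varepsilon\}$ contained in $S$. Here one applies the one-variable Laurent theorem successively: expand $f$ in $z_1$ on an annulus (coefficients holomorphic in $z_2,\dots,z_n$ by Cauchy integral differentiation under the integral sign), then expand each coefficient in $z_2$, and so on. Standard estimates (the Cauchy estimates on slightly smaller polyannuli) give that the resulting $n$-fold series converges absolutely and uniformly on compact subsets of $A(r)$, and by construction its coefficients are exactly the $c_{\alpha_1\dots\alpha_n}$ above. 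This handles a neighborhood of the base multicircle.

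The remaining step — and the one I expect to be the main obstacle — is propagating the equality $f=\sum c_{\alpha}z^{-\alpha-1}$ from the one polyannulus to all of $S$. The coefficients are already known to be globally defined, so the series $g(z):=\sum_\alpha c_{\alpha_1\dots\alpha_n} z_1^{-\alpha_1-1}\cdots z_n^{-\alpha_n-1}$ is a fixed formal object; I would argue that on any polyannulus $A(r')$ with $r'\in\tr S$ the same slicing computation reproduces $f$ with the same coefficients, hence $g$ converges to $f$ there. The subtlety is that a multicircular connected domain need not be a polyannulus — its trace can be any connected open subset of $\R^n_+$, not a box — so one cannot expect a single Laurent series to converge on a box containing all of $S$; rather, one shows that for each point $p\in S$ there is a small polyannulus around it inside $S$ on which the expansion is valid with the globally fixed coefficients, and connectedness of $\tr S$ guarantees these local expansions are mutually consistent. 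Uniqueness is then immediate: any convergent Laurent series for $f$ on $S$ must, by integrating against $z_1^{\alpha_1}\cdots z_n^{\alpha_n}$ over a multicircle in $S$ and using uniform convergence to interchange sum and integral, have coefficients equal to the $c_\alpha$. Uniform convergence on compact subsets of $S$ follows by covering the compact set by finitely many such polyannuli.
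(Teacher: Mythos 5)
Your sketch is correct, but note that the paper does not actually prove this theorem: its ``proof'' is a citation to Theorems 1.5.4 and 2.7.1 of Korevaar--Wiegerinck (and to Abhyankar), and your argument is precisely a reconstruction of the standard proof found there --- well-definedness of the contour-integral coefficients via connectedness of the trace and axis-parallel deformations, local validity on polyannuli by iterated one-variable Laurent expansion with Cauchy estimates, and uniqueness by term-by-term integration against monomials. So you have supplied, correctly, the argument the paper chose to outsource.
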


\begin{proof}
See Theorem 1.5.4, Theorem 2.7.1 and the discussion in Section 2.8 of \cite{SCV}, or \cite{LAG}, Section 6.  
\end{proof}

\begin{rema}\label{CoeffDer}
If the Laurent series is lower-truncated in $z_n$, let $-M_n$ be a lower bound of the powers of $z_n$, then one can recover the coefficient of $z_n$ from the derivatives of $z_n^{M_n}f(z_1, ..., z_n)$. More precisely, we have
$$\sum\limits_{\alpha_1,...,\alpha_{n-1} \in \mathbb{Z}} {{c_{\alpha_1...\alpha_n}}} z_1^{\alpha_1} \cdots z_{n-1}^{\alpha_{n-1}} = \frac 1 {(\alpha_n + M_n)!}\mathop {\lim }\limits_{z_n = 0} {\left( {\frac{\partial }{{\partial  z_n}}} \right)^{\alpha_n + M_n}}(z_n^{M_n}f(z_1,...,z_n))$$
\end{rema}



\begin{lemma}\label{IterSeries}
Let $n$ be a positive integer. Let $f$ be a rational function in $z_1, ..., z_n$. Let $T$ be a connected multicircular domain on which the lowest power of $z_n$ in the Laurent series expansion of $f(z_1, ..., z_n)$ is the same as the order of pole $z_n=0$. Let $S$ be a nonempty open subset of $T$ and $S'$ be the image of $S$ via the projection $(z_1, ..., z_n)\mapsto (z_1, ..., z_{n-1})$. Assume that for each fixed $k_n \in \Z$, the series
$$\sum_{k_1, k_2, ..., k_{n-1}\in \Z}a_{k_1k_2...k_{n-1}k_n} z_1^{k_1}z_2^{k_2}\cdots z_{n-1}^{k_{n-1}}$$
converges absolutely for every $(z_1, z_2, ..., z_{n-1})\in S'$, and 
\begin{equation}\label{IterSeries-1}
\sum_{k_n\in \Z}\left(\sum_{k_1, k_2, ..., k_{n-1}\in \Z}a_{k_1k_2...k_{n-1}k_n} z_1^{k_1}z_2^{k_2}\cdots z_{n-1}^{k_{n-1}}\right)z_n^{k_n},
\end{equation}
viewed as a series whose terms are $\left(\sum\limits_{k_1, k_2, ..., k_{n-1}\in \Z}a_{k_1k_2...k_{n-1}k_n} z_1^{k_1}z_2^{k_2}\cdots z_{n-1}^{k_{n-1}}\right)z_n^{k_n}$, is lower-truncated in $z_n$ and converges to $f(z_1, ..., z_n)$ for every $(z_1, z_2, ..., z_{n-1}, z_n)\in S$.  Then the corresponding Laurent series
\begin{equation}\label{IterSeries-2}
\sum_{k_1, k_2, ...,  k_{n-1}, k_n\in \Z}a_{k_1k_2...k_n} z_1^{k_1}z_2^{k_2}\cdots z_{n-1}^{k_{n-1}} z_n^{k_n},
\end{equation}
converges absolutely to $f(z_1, ..., z_n)$ for every $(z_1, ..., z_n)\in T$ 
\end{lemma}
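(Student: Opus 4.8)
The plan is to identify the coefficients $a_{k_1\cdots k_n}$ of the iterated series \eqref{IterSeries-2} with the coefficients of the Laurent expansion of $f$ on $T$; since by Theorem \ref{SVC-ref} that Laurent expansion converges absolutely at every point of $T$, the equality of coefficients is exactly the desired conclusion. Throughout I would write $z'=(z_1,\dots,z_{n-1})$, let $\pi\colon\C^n\to\C^{n-1}$ be the projection onto the first $n-1$ coordinates, and put $T'=\pi(T)$; this is again a connected multicircular domain, and $S'=\pi(S)$ is a nonempty open subset of it. For $k_n\in\Z$ set $A_{k_n}(z')=\sum_{k_1,\dots,k_{n-1}}a_{k_1\cdots k_n}z_1^{k_1}\cdots z_{n-1}^{k_{n-1}}$, which by hypothesis converges absolutely on $S'$. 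First I would apply Theorem \ref{SVC-ref} on $T$: $f$ is holomorphic there with a unique absolutely convergent Laurent expansion $\sum_{k_1,\dots,k_n}c_{k_1\cdots k_n}z_1^{k_1}\cdots z_n^{k_n}$ whose coefficients are given by integration over any multicircle $\gamma(0,r)$ with $r\in\tr T$. Grouping this absolutely convergent series by the power of $z_n$ shows that for each fixed $k_n$ the series $G_{k_n}(z')=\sum_{k_1,\dots,k_{n-1}}c_{k_1\cdots k_n}z_1^{k_1}\cdots z_{n-1}^{k_{n-1}}$ converges absolutely on $T'$ and is holomorphic there, that $f(z',z_n)=\sum_{k_n}G_{k_n}(z')z_n^{k_n}$ on $T$, and (using the hypothesis on $T$) that this series is lower-truncated in $z_n$ with truncation equal to minus the order of the pole $z_n=0$.

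Next I would show that $A_{k_n}=G_{k_n}$ on $S'$ for every $k_n$. Fixing $z'\in S'$, the fiber $S_{z'}=\{z_n:(z',z_n)\in S\}$ is open and nonempty, and over it both $\sum_{k_n}A_{k_n}(z')z_n^{k_n}$ (by hypothesis) and $\sum_{k_n}G_{k_n}(z')z_n^{k_n}$ (by the previous paragraph) are lower-truncated Laurent series in the single variable $z_n$ converging to $f(z',\cdot)$. Multiplying both by a single power $z_n^N$ that clears the truncation turns them into power series in $z_n$; each converges at every point of $S_{z'}$, hence on the disc of radius $\sup_{z_n\in S_{z'}}|z_n|$, and they agree on the nonempty open subset $S_{z'}$ of that disc, so the identity theorem forces their Taylor coefficients to coincide, i.e.\ $A_{k_n}(z')=G_{k_n}(z')$. (Remark \ref{CoeffDer} provides an equivalent way to organize this coefficient extraction.) Since $z'\in S'$ was arbitrary, the claim holds.

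Finally, fixing $k_n$, the series defining $A_{k_n}$ converges absolutely on the interior $D_{k_n}$ of its domain of absolute convergence — which is a connected multicircular domain containing $S'$ — while $G_{k_n}$ converges absolutely on $T'\supseteq S'$, and the two functions agree on $S'$. Choosing $z'_0\in S'$ and letting $\rho=(|z_{0,1}|,\dots,|z_{0,n-1}|)$ be its componentwise modulus, the multicircle $\gamma(0,\rho)$ lies in $D_{k_n}\cap T'$, and by the identity theorem $A_{k_n}=G_{k_n}$ on the connected component of $D_{k_n}\cap T'$ that contains $\gamma(0,\rho)$ (they already agree on the open set $S'$, which meets that component at $z'_0$). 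Hence the coefficient integral formula of Theorem \ref{SVC-ref}, applied to $A_{k_n}$ on $D_{k_n}$ and to $G_{k_n}$ on $T'$ and evaluated over the single multicircle $\gamma(0,\rho)$, yields $a_{k_1\cdots k_n}=c_{k_1\cdots k_n}$ for all $k_1,\dots,k_{n-1}$. As $k_n$ was arbitrary, every coefficient of \eqref{IterSeries-2} equals the corresponding coefficient of the Laurent expansion of $f$ on $T$, which by Theorem \ref{SVC-ref} converges absolutely to $f$ at every point of $T$; this is the assertion.

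The step I expect to be the main obstacle is the bookkeeping forced by the fact that $S$ and $S'$ are not assumed multicircular, so one cannot directly read off Laurent coefficients by integrating over multicircles lying inside them. The remedy is to pass, at each stage, to the automatically multicircular domains of absolute convergence of the relevant one-variable power series (in the second paragraph) and of the $(n-1)$-variable subseries $A_{k_n}$ (in the third paragraph), and to use the identity theorem to transport the equality of the two functions from the small set $S'$ out to a full multicircle sitting inside the intersection with $T'$. Once this is handled, the rest is a routine application of Theorem \ref{SVC-ref} together with the elementary fact that a power series converging at a point converges on the corresponding disc.
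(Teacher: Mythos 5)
Your proof is correct and follows essentially the same route as the paper's: both identify the coefficients of the iterated series with the Laurent coefficients of $f$ on $T$ by first extracting the $z_n$-coefficient functions (you via the one-variable identity theorem on the fibers $S_{z'}$, the paper via term-by-term differentiation at $z_n=0$ after clearing the pole with $z_n^{M_n}$) and then matching the remaining coefficients through the uniqueness and integral formula of Theorem \ref{SVC-ref}. If anything, your explicit identity-theorem step transporting $A_{k_n}=G_{k_n}$ from $S'$ to a full multicircle in $D_{k_n}\cap T'$ makes the corresponding step of the paper's argument more precise.
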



\begin{proof} 
The proof is divided into three parts: 
\begin{enumerate}
\item For any $(z_1,..., z_{n-1}) \in S'$, let $z_n\in \C$ such that $(z_1, ..., z_n)\in S$. Since the series 
$$\sum_{k_n\in \Z}\left(\sum_{k_1, ..., k_{n-1}\in \Z}a_{k_1...k_n} z_1^{k_1}\cdots z_{n-1}^{k_{n-1}}\right)w_n^{k_n},
$$
is lower-truncated in $w_n$ and converges when $w_n = z_n$ and $S$ is open, one can find an integer $M_n$ and $r= |z_n| >0$ such that the series 
$$w_n^{M_n}\sum_{k_n\in \Z}\left(\sum_{k_1, ..., k_{n-1}\in \Z}a_{k_1...k_n} z_1^{k_1}\cdots z_{n-1}^{k_{n-1}}\right)w_n^{k_n},
$$
is a power series in $w_n$ and converges absolutely in the region $\{w_n\in \C: |w_n|<r\}$ to $w_n^{M_n}f(z_1, ..., z_{n-1}, w_n)$. Since the radius of convergence is positive, we can perform term-by-term partial differentiation and evaluate $w_n=0$, to conclude that for each $k_n\in \Z$
$$\sum_{k_1, ..., k_{n-1}\in\Z} a_{k_1...k_n}z_1^{k_1}\cdots z_{n-1}^{k_{n-1}} = \lim_{w_n=0}\left(\frac{\partial}{\partial w_n}\right)^{k_n+M_n} (w_n^{M_2}f(z_1, ..., z_{n-1}, w_n))$$
\item Denote by $g_{k_n}(z_1, ..., z_{n-1})$ the function given by the right-hand-side. Since the series in the left-hand-side converges absolutely in $S'$, one sees easily that it also converges absolutely in the multicircular domain $\{(e^{i\theta_1}z_1, ..., e^{i\theta_{n-1}}z_{n-1}): (z_1, ..., z_{n-1})\in S', \theta_i\in \R, i=1, ...,n-1\}$. Thus $g_{k_n}(z_1, ..., z_{n-1})$ on the right-hand-side is a holomorphic function defined on a connected multicircular domain. With Theorem \ref{SVC-ref}, we know that the Laurent series expansion of $g_{k_n}(z_1, ..., z_{n-1})$ is precisely $\sum\limits_{k_1, k_2, ..., k_{n-1}\in \Z}a_{k_1k_2...k_{n-1}k_n} z_1^{-k_1-1}z_2^{-k_2-1}\cdots z_{n-1}^{-k_{n-1}-1}$ and thus
$$a_{k_1k_2...k_{n-1}k_n} = \idotsint_\gamma f(z_1, ..., z_n)dz_1\cdots dz_n. $$
for any multicircle $\gamma$ that lies in the multicircular domain. 
\item Now we consider the Laurent expansion of the function $f(z_1, ..., z_n)$ in the region $T$. By Theorem \ref{SVC-ref}, this function can be expanded as a unique Laurent series in $z_1, ..., z_n$. Moreover by assumption, the lowest power of $z_n$ in this series is bounded below by the negative of the order of the pole $z_n=0$, which is easily seen to be less negative than $M_n$. Therefore with Theorem \ref{SVC-ref} and Remark \ref{CoeffDer}, one can see that for each $k_1, ..., k_n\in \Z$, the coefficient of $z_1^{k_1}\cdots z_n^{k_n}$ in this expansion is 
$$\idotsint_\gamma z_1^{-k_1-1}\cdots z_{n-1}^{-k_{n-1}} \lim_{z_n=0}\left(\frac{\partial}{\partial z_n}\right)^{k_n+M_n} (z_n^{M_n}f(z_1, ..., z_n))dz_1\cdots dz_{n-1}$$
which coincides with $a_{k_1...k_n}$. As the expansion is done in the larger multicircular domain $T$, we proved that the series (\ref{IterSeries-2}) converges absolutely for every $(z_1, ..., z_n)\in T$. 
\end{enumerate}
\end{proof}

\begin{rema}
Note that the series (\ref{IterSeries-1}) is an iterated series. So the theorem concludes the convergence of the series (\ref{IterSeries-2}) from the convergence of the iterated series (\ref{IterSeries-1}) to a holomorphic function. 
\end{rema}

For iterated series that are upper truncated, we also have a similar result. 

\begin{lemma}\label{IterSeries-Inf}
Let $n$ be a positive integer. Let $f$ be a rational function in $z_1, ..., z_n$. Let $T$ be a connected multicircular domain on which the highest power of $z_n$ in the Laurent series expansion of $f(z_1, ..., z_n)$ is the same as the negative of the order of pole $z_n=\infty$. Let $S$ be a nonempty open subset of $T$ and $S'$ be the image of $S$ via the projection $(z_1, ..., z_n)\mapsto (z_1, ..., z_{n-1})$. Assume that for each fixed $k_n \in \Z$, the series
$$\sum_{k_1, k_2, ..., k_{n-1}\in \Z}a_{k_1k_2...k_{n-1}k_n} z_1^{k_1}z_2^{k_2}\cdots z_{n-1}^{k_{n-1}}$$
converges absolutely for every $(z_1, z_2, ..., z_{n-1})\in S'$, and 
\begin{equation*}
\sum_{k_n\in \Z}\left(\sum_{k_1, k_2, ..., k_{n-1}\in \Z}a_{k_1k_2...k_{n-1}k_n} z_1^{k_1}z_2^{k_2}\cdots z_{n-1}^{k_{n-1}}\right)z_n^{k_n},
\end{equation*}
viewed as a series whose terms are $\left(\sum\limits_{k_1, k_2, ..., k_{n-1}\in \Z}a_{k_1k_2...k_{n-1}k_n} z_1^{k_1}z_2^{k_2}\cdots z_{n-1}^{k_{n-1}}\right)z_n^{k_n}$, is upper-truncated in $z_n$ and converges to $f(z_1, ..., z_n)$ for every $(z_1, z_2, ..., z_{n-1}, z_n)\in S$.  Then the corresponding Laurent series
\begin{equation*}
\sum_{k_1, k_2, ...,  k_{n-1}, k_n\in \Z}a_{k_1k_2...k_n} z_1^{k_1}z_2^{k_2}\cdots z_{n-1}^{k_{n-1}} z_n^{k_n},
\end{equation*}
converges absolutely to $f(z_1, ..., z_n)$ for every $(z_1, ..., z_n)\in T$ 
\end{lemma}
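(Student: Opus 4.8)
The plan is to obtain Lemma \ref{IterSeries-Inf} from Lemma \ref{IterSeries} by inverting the last variable: the substitution $w_n = 1/z_n$ turns upper truncation in $z_n$ into lower truncation in $w_n$, and the pole at $z_n = \infty$ into the pole at $w_n = 0$, so the two statements become formally identical.

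Concretely, I would introduce the rational function $\tilde f(z_1, \dots, z_{n-1}, w_n) := f(z_1, \dots, z_{n-1}, 1/w_n)$ (it is rational: multiply numerator and denominator by a large power of $w_n$), the coefficients $\tilde a_{k_1 \cdots k_{n-1} k_n} := a_{k_1 \cdots k_{n-1}, -k_n}$, and the inversion homeomorphism $\iota\colon (z_1, \dots, z_{n-1}, z_n) \mapsto (z_1, \dots, z_{n-1}, 1/z_n)$ of $\{z_n \neq 0\}$. Since $\iota$ commutes with the multicircular torus action and $r_n \mapsto 1/r_n$ is a homeomorphism of $\R_+$, the images $\tilde T := \iota(T)$ and $\tilde S := \iota(S)$ are again multicircular, $\tilde S$ is a nonempty open subset of $\tilde T$ with the same projection $S'$ away from the last coordinate, and by Lemma \ref{SVC-Connected} the connectedness of $T$ passes to $\tilde T$.

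Then I would check that $(\tilde f, \tilde T, \tilde S, \tilde a)$ satisfies the hypotheses of Lemma \ref{IterSeries}. The Laurent expansion of $\tilde f$ in $w_n$ on $\tilde T$ is obtained from that of $f$ in $z_n$ on $T$ by the substitution $z_n^{k} \mapsto w_n^{-k}$, so its lowest power of $w_n$ is minus the highest power of $z_n$ of $f$, while the order of the pole $w_n = 0$ of $\tilde f$ equals that of the pole $z_n = \infty$ of $f$; thus the pole-matching hypothesis of Lemma \ref{IterSeries-Inf} is precisely the pole-matching hypothesis of Lemma \ref{IterSeries} for $\tilde f$. For each fixed $k_n$, the series $\sum_{k_1, \dots, k_{n-1}} \tilde a_{k_1 \cdots k_n} z_1^{k_1} \cdots z_{n-1}^{k_{n-1}}$ equals $\sum_{k_1, \dots, k_{n-1}} a_{k_1 \cdots k_{n-1}, -k_n} z_1^{k_1} \cdots z_{n-1}^{k_{n-1}}$ and so converges absolutely on $S'$ by hypothesis; and substituting $z_n = 1/w_n$ into the upper-truncated iterated series for $f$ and reindexing $k_n \mapsto -k_n$ produces the lower-truncated iterated series $\sum_{k_n}\bigl(\sum_{k_1, \dots, k_{n-1}} \tilde a_{k_1 \cdots k_n} z_1^{k_1} \cdots z_{n-1}^{k_{n-1}}\bigr) w_n^{k_n}$, which converges to $\tilde f$ on $\tilde S$. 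Lemma \ref{IterSeries} then yields absolute convergence of $\sum_{k_1, \dots, k_n} \tilde a_{k_1 \cdots k_n} z_1^{k_1} \cdots z_{n-1}^{k_{n-1}} w_n^{k_n}$ to $\tilde f$ on all of $\tilde T$, and substituting back $w_n = 1/z_n$ and undoing the reindexing (a bijection of the index set, hence harmless for absolute convergence) gives the assertion on $T$.

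The only real obstacle is bookkeeping rather than analysis: one must confirm that the notion of ``order of pole at $z_n = \infty$'' and its sign convention transform correctly under $z_n \mapsto 1/z_n$, and that $\tilde T$ is genuinely connected and carries the expansion-compatibility property. All the complex-analytic content is already packaged in Lemma \ref{IterSeries}; alternatively one could copy its three-step proof, replacing Remark \ref{CoeffDer} by its upper-truncated analogue (recovering coefficients from derivatives of $z_n^{-M_n} f$ expanded near $z_n = \infty$), but the inversion argument avoids repeating the estimates.
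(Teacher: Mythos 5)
Your proposal is correct and is exactly the paper's argument: the paper's proof of Lemma \ref{IterSeries-Inf} consists of the single sentence ``It suffices to perform the transformation $z_n \mapsto 1/z_n$ and apply Lemma \ref{IterSeries}.'' You have simply spelled out the bookkeeping (the induced rational function, reindexed coefficients, and transformed domains) that the paper leaves implicit.
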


\begin{proof}
It suffices to perform the transformation $z_n \mapsto 1/z_n$ and apply the Lemma \ref{IterSeries}
\end{proof}


\subsection{Rationality of iterates of any numbers of vertex operators}\label{Section-Iterate}

With the above preparations, we now deal with the iterates of any numbers of vertex operators. For simplicity, we first investigate the iterate of three vertex operators, i.e. 
$$\langle w', Y_W^L(Y_V(Y_V(u_1, z_1-z_2)u_2, z_2-z_3)u_3, z_3)w\rangle.$$
To show that this series converges absolutely and to find the region of convergence, we need the following intermediate proposition:

\begin{prop}\label{label-1} 
For any $u_1, u_2, u_3\in V, w\in W, w'\in w'$, fixed $z_1, z_2, z_3\in \C$ satisfying $|z_2|>|z_1-z_2-z_3|, |z_2|>|z_1-z_2|>0, |z_2|>|z_3|>0$, the series
$$\langle w',  Y_W^L(Y_V(u_1, z_1-z_2)u_2, z_2-z_3) Y_W^L(u_3, z_3)w\rangle$$
converges absolutely to the rational function that 
$$\langle w', Y_W^L(u_1, z_1)Y_W^L(u_2, z_2)Y_W^L(u_3,z_3)w\rangle$$
converges to. 
\end{prop}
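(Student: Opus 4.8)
The plan is to reduce everything to the rationality of the product of three vertex operators (which is part of the left-module axioms), using the two-vertex-operator associativity to convert products into the semi-iterate, and the complex-analytic Lemmas \ref{IterSeries} and \ref{IterSeries-Inf} to propagate absolute convergence from a small region out to the full region in the statement. Throughout one may assume $u_1,u_2,u_3,w,w'$ homogeneous, since everything in sight is linear. The reference object is this: by the rationality axiom in Definition \ref{DefMOSVA-L}, $\langle w',Y_W^L(u_1,z_1)Y_W^L(u_2,z_2)Y_W^L(u_3,z_3)w\rangle$ converges absolutely on $\{|z_1|>|z_2|>|z_3|>0\}$ to a rational function $R$ whose only poles lie on the $z_i=0$ and on the $z_i=z_j$.

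First I would pass to the semi-iterate componentwise in the last slot. Expand $Y_W^L(u_3,z_3)w=\sum_m (Y_W^L)_m(u_3)w\,z_3^{-m-1}$, lower-truncated in $z_3$ by Proposition \ref{ImmediateFacts-L}(2), and apply the module associativity $\langle w',Y_W^L(u_1,\zeta_1)Y_W^L(u_2,\zeta_2)\widetilde w\rangle=\langle w',Y_W^L(Y_V(u_1,\zeta_1-\zeta_2)u_2,\zeta_2)\widetilde w\rangle$ to each homogeneous vector $\widetilde w=(Y_W^L)_m(u_3)w\in W$, valid on $|\zeta_1|>|\zeta_2|>|\zeta_1-\zeta_2|>0$. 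Multiplying by $z_3^{-m-1}$ and summing over $m$, the absolute convergence of the product of three allows a Fubini-type rearrangement and identifies $R$, on a suitable sub-region, with the series $\langle w',Y_W^L(Y_V(u_1,z_1-z_2)u_2,z_2-z_3)Y_W^L(u_3,z_3)w\rangle$. To reconcile the outer variable $z_2-z_3$ of the statement with the variable that naturally falls out of associativity, I would use the $D$-conjugation property $Y_W^L(a,x+y)=e^{yD_W}Y_W^L(a,x)e^{-yD_W}$ of Proposition \ref{ImmediateFacts-L}(3) with $y=z_3$; this is legitimate on the region in the statement precisely because $|z_3|<|z_2|$ there, so the relevant exponential series converge absolutely. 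The componentwise manipulations have to be read inside the large completion $\arc W=\prod_m W_{[m]}^{**}$, which is again justified by the absolute convergence of the product of three.

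At this point the identity with $R$ is known only on the smaller region carved out by the extra inequalities forced by associativity, and the remaining work is to enlarge it to all of the region in the statement. Here I would apply Lemma \ref{IterSeries}: reading the semi-iterate as an iterated series in a chosen peeled variable, for each fixed power of that variable the inner series converges absolutely on the projection of the small region (this is the rationality of iterates of two vertex operators for the module), the outer series is lower-truncated, and the iterated series sums to $R$ on the small region; provided the lowest power of the peeled variable equals the order of the corresponding pole of $R$ — which one reads off from the explicit degree and pole bookkeeping of Remark \ref{numdegree} — the lemma upgrades this to absolute convergence of the full Laurent series on the associated connected multicircular domain. Peeling off the remaining variables in turn, using Lemma \ref{IterSeries-Inf} in the slots where the iterated series is upper-truncated instead of lower-truncated, one covers all of the region in the statement and reads off that the limit is $R$.

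The step I expect to be the main obstacle is this last one: matching the truncation order of the peeled variable with the pole order of $R$ so that Lemmas \ref{IterSeries}/\ref{IterSeries-Inf} apply, and then checking that the chain of multicircular domains produced by peeling variables one at a time genuinely exhausts the region of the statement rather than only a smaller multicircular piece of it — the inequality $|z_2|>|z_1-z_2-z_3|$ in particular will have to be traced back through the successive re-expansions and seen to be exactly the obstruction that the last application of the lemma removes. The earlier associativity-plus-rearrangement step is routine by comparison, modulo the care needed to keep every infinite sum absolutely convergent while reordering.
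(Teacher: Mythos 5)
Your outline follows the same route as the paper's proof: take the rational function $f$ given by the rationality of products of three operators, apply the two-operator associativity to each component $(Y_W^L)_{k_3}(u_3)w$ of the lower-truncated expansion in $z_3$, resum, and then use Lemma \ref{IterSeries} to enlarge the region of absolute convergence of the resulting semi-iterate. Two remarks on where your write-up diverges from what is actually needed.

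First, the $D$-conjugation step you insert to ``reconcile'' the outer variable $z_2-z_3$ with $z_2$ does not achieve the identification you want: conjugation gives $Y_W^L(a,z_2)=Y_W^L(e^{z_3D_V}a,\,z_2-z_3)$, so you end up with $\langle w', Y_W^L(e^{z_3D_V}Y_V(u_1,z_1-z_2)u_2,\,z_2-z_3)Y_W^L(u_3,z_3)w\rangle$, which is a genuinely different series from the one displayed in the statement --- the operator $e^{z_3D_V}$ does not cancel against anything. Fortunately this step is unnecessary: the ``$z_2-z_3$'' in the displayed statement is a typo for ``$z_2$'' (the paper's own proof, the theorem that quotes this proposition, and the stated region of convergence, which is expressed in terms of $|z_2|$ rather than $|z_2-z_3|$, all concern $\langle w', Y_W^L(Y_V(u_1,z_1-z_2)u_2,z_2)Y_W^L(u_3,z_3)w\rangle$). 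So this step should be deleted, not repaired.

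Second, the point you flag as the main obstacle is precisely where the paper does its work, and it is settled by a \emph{single} application of Lemma \ref{IterSeries}, peeling only $z_3$; no chain of peelings occurs and Lemma \ref{IterSeries-Inf} is not used. In the coordinates $\zeta_1=z_1-z_2$, $\zeta_2=z_2$, $\zeta_3=z_3$ one takes $T=\{|\zeta_2|>|\zeta_1|+|\zeta_3|,\ |\zeta_1|>0,\ |\zeta_3|>0\}$ and expands $f$ there by expanding $(\zeta_1+\zeta_2)^{-p_1}$ in powers of $\zeta_1$, $(\zeta_2-\zeta_3)^{-p_{23}}$ in powers of $\zeta_3$, and $(\zeta_1+\zeta_2-\zeta_3)^{-p_{13}}$ in powers of $\zeta_1-\zeta_3$; only the explicit factor $\zeta_3^{-p_3}$ produces negative powers of $\zeta_3$, so the lowest power of $\zeta_3$ in this Laurent expansion is $-p_3$, the order of the pole $\zeta_3=0$, which is exactly the hypothesis of the lemma. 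The inner double series for fixed $k_3$ converges absolutely on the projection by the rationality of iterates of two vertex operators, the lemma yields absolute convergence of the triple series on $T$, and since the same expansion of $f$ converges on the larger set $\{|\zeta_2|>|\zeta_1-\zeta_3|,\ |\zeta_2|>|\zeta_1|>0,\ |\zeta_2|>|\zeta_3|>0\}$, the series converges there too, which is the region in the statement. With these two corrections your proposal is the paper's proof.
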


\begin{proof} 
From the rationality of products, we know that
\begin{align*}
&\langle w', Y_W^L(u_1, z_1)Y_W^L(u_2, z_2)Y_W^L(u_3,z_3)w\rangle \\
& \qquad = \sum_{k_1, k_2, k_3\in \Z} \langle w', (Y_W^L)_{k_1}(u_1) (Y_W^L)_{k_2}(u_2) (Y_W^L)_{k_3}(u_3)w \rangle z_1^{-k_1-1}z_2^{-k_2-1}z_3^{-k_3-1}
\end{align*}
converges absolutely in the region $\{(z_1, z_2, z_3)\in \C^3: |z_1|>|z_2|>|z_3|>0\}$ to a rational function that has the only possible poles at $z_1=0, z_2=0, z_3=0, z_1=z_2, z_2=z_3, z_1=z_3$. Denote this rational function by $f(z_1, z_2, z_3)$. Then 
\begin{equation}\label{3-iter-corr}
f(z_1, z_2, z_3) = \frac{g(z_1, z_2, z_3)}{z_1^{p_1}z_2^{p_2}z_3^{p_3}(z_1-z_2)^{p_{12}}(z_2-z_3)^{p_{23}}(z_1-z_3)^{p_{13}}}
\end{equation}
for some integers $p_1, p_2,p_3, p_{12},p_{23},p_{13} \geq 0$ and some polynomial $g(z_1, z_2, z_3)$. 

Now for each fixed $k_3\in \Z$, we consider the series
\begin{align*}
&\langle w', Y_W^L(u_1, z_1)Y_W^L(u_2, z_2)(Y_W^L)_{k_3}(u_3)w\rangle \\
& \qquad = \sum_{k_1, k_2\in \Z} \langle w', (Y_W^L)_{k_1}(u_1) (Y_W^L)_{k_2}(u_2) (Y_W^L)_{k_3}(u_3)w \rangle z_1^{-k_1-1}z_2^{-k_2-1}.
\end{align*}
As part of an absolutely convergent series, it is also absolutely convergent. From associativity, its sum is equal to 
$$\langle w', Y_W^L(Y_V(u_1, z_1-z_2)u_2, z_2) (Y_W^L)_{k_3}(u_3)w\rangle$$
when $|z_1|>|z_2|>|z_1-z_2|>0$ for each fixed $l$. We multiply it with $z_3^{-k_3-1}$ and sum up all $k_3\in \Z$ to see that 
\begin{align*}
& \sum_{k_3\in \Z}\langle w', Y_W^L(Y_V(u_1, z_1-z_2)u_2, z_2) (Y_W^L)_{k_3}(u_3)w\rangle z_3^{-k_3-1}\\
& \quad = \sum_{k_3\in \Z}\left(\sum_{k_1, k_2\in \Z} \langle w', (Y_W^L)_{k_2}((Y_V)_{k_1}(u_1)u_2) (Y_W^L)_{k_3}(u_3)w \rangle (z_1-z_2)^{-k_1-1}z_2^{-k_2-1}\right)z_3^{-k_3-1}
\end{align*}
viewed as a single complex series whose terms are 
$$\left(\sum_{k_1, k_2\in \Z} \langle w', (Y_W^L)_{k_2}((Y_V)_{k_1}(u_1)u_2) (Y_W^L)_{k_3}(u_3)w \rangle (z_1-z_2)^{-k_1-1}z_2^{-k_2-1}\right)z_3^{-k_3-1},$$
converges to $f(z_1, z_2, z_3)$ when $|z_1|>|z_2|>|z_3|>0, |z_2|>|z_1-z_2|>0$. Moreover, one checks easily that the power of $z_3$ is lower-truncated. 

We now use Lemma 
\ref{IterSeries} to elaborately show that the series
\begin{align*}
& \langle w', Y_W^L(Y_V(u_1, z_1-z_2)u_2, z_2) Y_W^L(u_3, z_3)w\rangle \\
& \qquad = \sum_{k_1, k_2, k_3\in \Z} \langle w', (Y_W^L)_{k_2}((Y_V)_{k_1}(u_1)u_2) (Y_W^L)_{k_3}(u_3)w \rangle (z_1-z_2)^{-k_1-1}z_2^{-k_2-1}z_3^{-k_3-1}
\end{align*}
converges absolutely to $f(z_1, z_2, z_3)$ when $|z_2|>|z_1-z_2-z_3|, |z_2|>|z_1-z_2|>0, |z_2|>|z_3|>0$. First we set $\zeta_1=z_1-z_2, \zeta_2=z_2, \zeta_3=z_3$. Let
$$T = \{(\zeta_1, \zeta_2, \zeta_3): |\zeta_2|>|\zeta_3|+|\zeta_1|, |\zeta_1|>0, |\zeta_3|>0\}$$
With Lemma \ref{SVC-Connected}, we see that $T$ is a connected multicircular domain. Now we express the function $f(z_1, z_2, z_3)$ in terms of the variables $\zeta_1, \zeta_2, \zeta_3$ as
$$f(\zeta_1+\zeta_2, \zeta_2, \zeta_3) = \frac{g(\zeta_1+\zeta_2, \zeta_2, \zeta_3)}{(\zeta_1+\zeta_2)^{p_1}\zeta_2^{p_2}\zeta_3^{p_3}\zeta_1^{p_{12}}(\zeta_2-\zeta_3)^{p_{23}}(\zeta_1 + \zeta_2 -\zeta_3)^{p_{13}}},$$
which admits a Laurent series expansion in $\zeta_1, \zeta_2, \zeta_3$ by the following steps: 
\begin{enumerate}
\item Expand the negative powers of $\zeta_1 + \zeta_2$ as a power series in $\zeta_1$. The resulted series converges when $|\zeta_2|>|\zeta_1|$. 
\item Expand the negative powers of $\zeta_2 - \zeta_3$ as a power series in $\zeta_3$. The resulted series converges when $|\zeta_2|>|\zeta_3|$.
\item Expand the negative powers of $\zeta_1 + \zeta_2 - \zeta_3$ as power series in $\zeta_1 - \zeta_3$, then further expand all the positive power of $\zeta_1 - \zeta_3$ as polynomials. The resulted series converges in $|\zeta_2|>|\zeta_1 - \zeta_3|$. 
\end{enumerate}
Obviously if $(\zeta_1, \zeta_2,\zeta_3)\in T$, then all the above conditions are satisfied (note that $|\zeta_2|>|\zeta_3|+|\zeta_1|$ implies that $|\zeta_2|>|\zeta_1-\zeta_3|$ by triangle inequality). Thus $f(\zeta_1+\zeta_2, \zeta_2, \zeta_3)$ is expanded as an absolutely convergent Laurent series in $T$. From Theorem \ref{SVC-ref}, the Laurent series is unique. Note that the lowest power of $\zeta_3$ in this Laurent is $-p_3$.

Set 
$$S = \{(\zeta_1, \zeta_2, \zeta_3)\in \C^3:|\zeta_1+\zeta_2|>|\zeta_2|>|\zeta_3|>0, |\zeta_2|>|\zeta_1|>0 \} \cap T$$
Obviously, $S$ is a nonempty open subset of $T$. We know that the series 
$$\sum_{k_1, k_2\in \Z} \langle w', (Y_W^L)_{k_2}((Y_V)_{k_1}(u_1)u_2) (Y_W^L)_{k_3}(u_3)w \rangle \zeta_1^{-k_1-1}\zeta_2^{-k_2-1}\zeta_3^{-k_3-1}  $$
is absolutely convergent whenever $(\zeta_1, \zeta_2, \zeta_3)\in S$, and the series
$$\sum_{k_3\in \Z}\left(\sum_{k_1, k_2\in \Z} \langle w', (Y_W^L)_{k_2}((Y_V)_{k_1}(u_1)u_2) (Y_W^L)_{k_3}(u_3)w \rangle \zeta_1^{-k_1-1}\zeta_2^{-k_2-1}\right)\zeta_3^{-k_3-1}, $$
viewed as a series whose terms are $\sum\limits_{k\in \Z}\langle w', Y_W^L(\pi_k^V Y_V(u_1, \zeta_1)u_2, \zeta_2) \pi_l^W Y_W^L(u_3, \zeta_3)w\rangle $, is lower-truncated in $\zeta_3$ and absolutely convergent to $f(\zeta_1+\zeta_2, \zeta_2, \zeta_3)$ whenever $(\zeta_1, \zeta_2, \zeta_3)\in S$. Thus Lemma \ref{IterSeries} implies that the series 
$$\sum_{k_3\in \Z}\sum_{k_1, k_2\in \Z} \langle w', (Y_W^L)_{k_2}((Y_V)_{k_1}(u_1)u_2) (Y_W^L)_{k_3}(u_3)w \rangle \zeta_1^{-k_1-1}\zeta_2^{-k_2-1}\zeta_3^{-k_3-1} $$
converges absolutely when $(\zeta_1, \zeta_2, \zeta_3)\in T$. Finally, since the expansion of the rational function is given in the region  
$$\{(\zeta_1, \zeta_2, \zeta_3)\in \C^3: |\zeta_2|>|\zeta_1-\zeta_3|, |\zeta_2|>|\zeta_1|>0, |\zeta_2|>|\zeta_3|>0\},$$
the Laurent series also converges absolutely in this region. 
That is to say, in terms of variables $z_1, z_2, z_3$, the series 
$$\sum_{k_1, k_2, k_3\in \Z} \langle w', (Y_W^L)_{k_2}((Y_V)_{k_1}(u_1)u_2) (Y_W^L)_{k_3}(u_3)w \rangle (z_1-z_2)^{-k_1-1}z_2^{-k_2-1}z_3^{-k_3-1}$$
converges absolutely to $f(z_1, z_2, z_3)$ when $|z_2|>|z_1-z_2-z_3|, |z_2|>|z_1-z_2|>0,|z_2|>|z_3|>0$. 
\end{proof}

\begin{thm}
For any $u_1, u_2, u_3\in V, w\in W, w'\in w'$, fixed $z_1, z_2, z_3\in \C$ satisfying $|z_3|>|z_1-z_3|, |z_3|>|z_2-z_3|>|z_1-z_2|>0$, the series
$$\langle w', Y_W^L(Y_V(Y_V(u_1, z_1-z_2)u_2, z_2-z_3)u_3, z_3)w\rangle$$
converges absolutely to the rational function that 
$$\langle w', Y_W^L(u_1, z_1)Y_W^L(u_2, z_2)Y_W^L(u_3,z_3)w\rangle$$
converges to. 
\end{thm}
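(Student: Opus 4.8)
The plan is to obtain the statement from Proposition~\ref{label-1} by peeling off one further layer with the associativity axiom for two vertex operators in $W$, and then reassembling the resulting iterated series with Lemma~\ref{IterSeries}. Fix homogeneous $u_1,u_2,u_3\in V$, $w\in W$ and $w'\in W'$; the general case follows by multilinearity. Let $f(z_1,z_2,z_3)$ be the rational function to which $\langle w',Y_W^L(u_1,z_1)Y_W^L(u_2,z_2)Y_W^L(u_3,z_3)w\rangle$ converges, written as
\[
f=\frac{g(z_1,z_2,z_3)}{z_1^{p_1}z_2^{p_2}z_3^{p_3}(z_1-z_2)^{p_{12}}(z_1-z_3)^{p_{13}}(z_2-z_3)^{p_{23}}}
\]
with $g$ a polynomial. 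By Proposition~\ref{label-1}, the ``one-step'' iterate $\langle w',Y_W^L(Y_V(u_1,z_1-z_2)u_2,z_2)Y_W^L(u_3,z_3)w\rangle$ converges absolutely to $f$ on the region $|z_2|>|z_1-z_2-z_3|$, $|z_2|>|z_1-z_2|>0$, $|z_2|>|z_3|>0$.

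Next, for each fixed $k\in\Z$ the element $v_k:=(Y_V)_k(u_1)u_2$ lies in $V$, so the rationality of products of two vertex operators and the associativity axiom of Definition~\ref{DefMOSVA-L}, applied to the pair $(v_k,u_3)$, give that $\langle w',Y_W^L(v_k,z_2)Y_W^L(u_3,z_3)w\rangle$ and $\langle w',Y_W^L(Y_V(v_k,z_2-z_3)u_3,z_3)w\rangle$ both converge absolutely and agree when $|z_2|>|z_3|>|z_2-z_3|>0$, the latter being $\sum_{l,m}\langle w',(Y_W^L)_m((Y_V)_l(v_k)u_3)w\rangle(z_2-z_3)^{-l-1}z_3^{-m-1}$. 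Multiplying by $(z_1-z_2)^{-k-1}$ and summing over $k$, the left side becomes $f$ by Proposition~\ref{label-1} (on the intersection of the two regions above), while the right side is, by definition of the iterate notation, $\langle w',Y_W^L(Y_V(Y_V(u_1,z_1-z_2)u_2,z_2-z_3)u_3,z_3)w\rangle$ with the sum over $k$ performed last; a weight count shows that the exponents of $z_1-z_2$ occurring are lower-truncated. I then pass to $\zeta_1=z_1-z_2$, $\zeta_2=z_2-z_3$, $\zeta_3=z_3$, identify a connected multicircular domain $T$ (connectedness via Lemma~\ref{SVC-Connected}) on which $f$, re-expressed in $\zeta_1,\zeta_2,\zeta_3$, admits the Laurent expansion obtained by expanding the factors $(\zeta_1+\zeta_2+\zeta_3)^{-p_1}$, $(\zeta_2+\zeta_3)^{-p_2}$ and $(\zeta_1+\zeta_2)^{-p_{13}}$ in nonnegative powers of $\zeta_1$ and $\zeta_2$, and whose lowest $\zeta_1$-power equals the order of the pole $\zeta_1=0$, and apply Lemma~\ref{IterSeries} with $\zeta_1$ playing the role of the last variable: the inner double series in $\zeta_2,\zeta_3$ is the Laurent expansion of the rational function $\langle w',Y_W^L(Y_V(v_k,\zeta_2)u_3,\zeta_3)w\rangle$ and so converges absolutely on a full multicircular domain, the iterated series converges to $f$ on a nonempty open subset $S\subseteq T$ by the previous step, and it is lower-truncated in $\zeta_1$. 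The lemma then gives absolute convergence of the full triple series to $f$ on all of $T$, and hence on the region claimed in the theorem.

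The main obstacle is the complex-analytic bookkeeping in the last step. One must choose $T$ so that: (i) the Laurent expansion of $f$ in $\zeta_1,\zeta_2,\zeta_3$ that converges there is exactly the one built up by the iterate --- this pins down the ``corner'' of $\C^3$, because the $\zeta_1$-order of that expansion must equal the pole order at $\zeta_1=0$, forcing $(\zeta_1+\zeta_2+\zeta_3)^{-p_1}$ and $(\zeta_1+\zeta_2)^{-p_{13}}$ to be expanded in nonnegative powers of $\zeta_1$; (ii) the region on which convergence has actually been verified, namely the region of Proposition~\ref{label-1} intersected with $|z_2|>|z_3|>|z_2-z_3|>0$ and re-expressed in the $\zeta$-variables, is a nonempty open subset of $T$; and (iii) the inner double series converges on a genuine multicircular subdomain so that Theorem~\ref{SVC-ref} is applicable inside Lemma~\ref{IterSeries}. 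Reconciling the (non-multicircular) region stated in the theorem with the multicircular domain on which the rearranged Laurent series is guaranteed to converge is the delicate part; by contrast, once Proposition~\ref{label-1} is in hand the algebra amounts to a single further use of associativity for two vertex operators and is routine. The same scheme applies inductively to iterates of any number of vertex operators, using the corresponding higher-step version of Proposition~\ref{label-1}.
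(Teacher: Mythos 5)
Your proposal follows essentially the same route as the paper's proof: it invokes Proposition~\ref{label-1} for the one-step iterate, applies associativity to each coefficient $(Y_V)_{k}(u_1)u_2$ paired with $u_3$, resums over $k$ with $(z_1-z_2)^{-k-1}$, and then uses the change of variables $\zeta_1=z_1-z_2$, $\zeta_2=z_2-z_3$, $\zeta_3=z_3$ together with Lemma~\ref{IterSeries} (with $\zeta_1$ as the peeled-off variable) on a connected multicircular domain to reassemble the triple series. The choice of expansion of the factors $(\zeta_1+\zeta_2+\zeta_3)^{-p_1}$, $(\zeta_2+\zeta_3)^{-p_2}$, $(\zeta_1+\zeta_2)^{-p_{13}}$ and the identification of $T$ and $S$ match the paper's argument, so the proposal is correct and not materially different.
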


\begin{proof}
We proceed similarly based on the result above: in the series
\begin{align*}
& \langle w', Y_W^L(Y_V(u_1, z_1-z_2)u_2, z_2)Y_W^L(u_3, z_3)w\rangle \\
& \qquad = \sum_{k_1, k_2, k_3\in \Z} \langle w', (Y_W^L)_{k_2}((Y_V)_{k_1}(u_1)u_2) (Y_W^L)_{k_3}(u_3)w \rangle (z_1-z_2)^{-k_1-1}z_2^{-k_2-1}z_3^{-k_3-1}
\end{align*}
we fix $k_1$ and consider the series 
\begin{align*}
& \langle w', Y_W^L((Y_V)_{k_1}(u_1)u_2, z_2)Y_W^L(u_3, z_3)w\rangle \\
& \qquad = \sum_{k_2, k_3\in \C} \langle w', (Y_W^L)_{k_2}((Y_V)_{k_1}(u_1)u_2) (Y_W^L)_{k_3}(u_3)w \rangle z_2^{-k_2-1}z_3^{-k_3-1}. 
\end{align*}
As part of an absolutely convergent series, this series is also absolutely convergent. From associativity, its sum is equal to 
\begin{align*}
& \langle w', Y_W^L(Y_V((Y_V)_{k_1}(u_1)u_2, z_2-z_3)u_3, z_3)w\rangle \\
& \qquad = \sum_{k_2, k_3\in \Z} \langle w', (Y_W^L)_{k_3}((Y_V)_{k_2}((Y_V)_{k_1}(u_1)u_2)u_3)w\rangle (z_2-z_3)^{-k_2-1} z_3^{-k_3-1}. 
\end{align*}
when $|z_2|>|z_3|>|z_2-z_3|>0$. We multiply it with $(z_1-z_2)^{-k_1-1}$ sum up all $k_1\in \Z$. With the conclusion of the previous proposition, we see that 
\begin{align*}
& \sum_{k_1\in \Z}\langle w', Y_W^L(Y_V((Y_V)_{k_1}(u_1)u_2, z_2-z_3)u_3, z_3)w\rangle (z_1-z_2)^{-k_1-1}\\
& \quad = \sum_{k_1\in \Z}\left(\sum_{k_2, k_3\in \Z} \langle w', (Y_W^L)_{k_3}((Y_V)_{k_2}((Y_V)_{k_1}(u_1)u_2)u_3)w\rangle (z_2-z_3)^{-k_2-1} z_3^{-k_3-1}\right) (z_1-z_2)^{-k_1-1}. 
\end{align*}
viewed as a single complex series whose terms are 
$$\left(\sum_{k_2, k_3\in \Z} \langle w', (Y_W^L)_{k_3}((Y_V)_{k_2}((Y_V)_{k_1}(u_1)u_2)u_3)w\rangle (z_2-z_3)^{-k_2-1} z_3^{-k_3-1}\right) (z_1-z_2)^{-k_1-1}, $$
converges absolutely to $f(z_1, z_2, z_3)$ when $|z_2|>|z_2-z_3|, |z_2|>|z_3|+|z_1-z_2|, |z_1-z_2|>0, |z_3|>0$, for the same $f(z_1, z_2, z_3)$ as that in Formula (\ref{3-iter-corr}). Moreover, one sees that the power of $(z_1-z_2)$ in this series is lower-truncated. 

We similarly use Lemma 
\ref{IterSeries} to elaborately show that the series
\begin{align*}
& \langle w', Y_W^L(Y_V(Y_V(u_1, z_1-z_2)u_2, z_2-z_3)u_3, z_3)w\rangle \\
& \quad = \sum_{k_1, k_2, k_3\in \Z} \langle w', (Y_W^L)_{k_3}((Y_V)_{k_2}((Y_V)_{k_1}(u_1)u_2)u_3)w\rangle (z_1-z_2)^{-k_1-1}(z_2-z_3)^{-k_2-1} z_3^{-k_3-1} . 
\end{align*}
converges absolutely to $f(z_1, z_2, z_3)$ when $|z_3|>|z_1-z_3|, |z_2-z_3|>|z_1-z_2|>0$. 

First we perform the transformation $\zeta_1= z_1-z_2, \zeta_2=z_2-z_3, \zeta_3= z_3$. Set  
$$T = \{(\zeta_1, \zeta_2, \zeta_3): |\zeta_3| > |\zeta_1|+|\zeta_2|, |\zeta_2| > |\zeta_1|>0\}$$
With Lemma \ref{SVC-Connected}, we see that $T$ is a connected multicircular domain. Moreover, $T$ is a subset of $\{(\zeta_1, \zeta_2, \zeta_3)\in \C^3: |\zeta_i|>|\zeta_1|, i = 2, 3\}$.
We express the function $f(z_1, z_2, z_3)$ in terms of the variables $\zeta_1, \zeta_2, \zeta_3$ as
$$f(\zeta_1+\zeta_2+\zeta_3, \zeta_2+\zeta_3, \zeta_3) = \frac{g(\zeta_1+\zeta_2+\zeta_3, \zeta_2+\zeta_3, \zeta_3)}{(\zeta_1+\zeta_2+\zeta_3)^{p_1}(\zeta_2+\zeta_3)^{p_2}\zeta_3^{p_3}\zeta_1^{p_{12}}\zeta_2^{p_{23}}(\zeta_1 + \zeta_2)^{p^{13}}},$$
which admits a Laurent series expansion in the following steps:
\begin{enumerate}
\item Expand negative powers of $\zeta_1+\zeta_2+\zeta_3$ as power series in $\zeta_1+\zeta_2$, then further expand the positive powers of $\zeta_1+\zeta_2$ as polynomials in $\zeta_1$ and $\zeta_2$. This series converges absolutely when $|\zeta_3|>|\zeta_1+\zeta_2|$
\item Expand negative powers of $\zeta_2+\zeta_3$ as power series in $\zeta_2$. This series converges absolutely when $|\zeta_3|>|\zeta_2|$
\item Expand negative powers of $\zeta_1+\zeta_2$ as power series in $\zeta_1$. This series converges absolutely when $|\zeta_2|>|\zeta_1|$
\end{enumerate}
Obviously if $(\zeta_1, \zeta_2, \zeta_3)\in T$, then all the above conditions are satisfied (Note that $|\zeta_3|>|\zeta_1|+|\zeta_2|$ implies that $|\zeta_3|>|\zeta_1+\zeta_2|$ by triangle inequality). Thus $f(\zeta_1+\zeta_2, \zeta_2+\zeta_3, \zeta_3)$ is expressed as an absolutely convergent Laurent series. From Theorem \ref{SVC-ref}, the Laurent series is unique. 

Set 
$$S = \{(\zeta_1, \zeta_2, \zeta_3): |\zeta_2|>|\zeta_3|+|\zeta_1|, |\zeta_1|>0, |\zeta_3|>0\}\cap T. $$
So $S$ is a nonempty open subset of $T$. We know that the series 
\begin{align*}
\sum_{k_2, k_3\in \Z} \langle w', (Y_W^L)_{k_3}((Y_V)_{k_2}((Y_V)_{k_1}(u_1)u_2)u_3)w\rangle \zeta_2^{-k_2-1} \zeta_3^{-k_3-1} \zeta_1^{-k_1-1}. 
\end{align*}
converges absolutely when $(\zeta_1, \zeta_2, \zeta_3)\in S$, and the series
$$\sum_{k_1\in \Z}\left(\sum_{k_2, k_3\in \Z} \langle w', (Y_W^L)_{k_3}((Y_V)_{k_2}((Y_V)_{k_1}(u_1)u_2)u_3)w\rangle \zeta_2^{-k_2-1} \zeta_3^{-k_3-1}\right) \zeta_1^{-k_1-1}, $$
viewed as a series whose terms are 
$$\sum_{k_2, k_3\in \Z} \langle w', (Y_W^L)_{k_3}((Y_V)_{k_2}((Y_V)_{k_1}(u_1)u_2)u_3)w\rangle \zeta_2^{-k_2-1} \zeta_3^{-k_3-1} \zeta_1^{-k_1-1}, $$ 
converges absolutely to $f(\zeta_1+\zeta_2, \zeta_2+\zeta_3, \zeta_3)$ when $(\zeta_1, \zeta_2, \zeta_3)\in S$. Thus Lemma \ref{IterSeries} implies that the triple series
$$\sum_{k_1\in \Z}\sum_{k_2, k_3\in \Z} \langle w', (Y_W^L)_{k_3}((Y_V)_{k_2}((Y_V)_{k_1}(u_1)u_2)u_3)w\rangle \zeta_2^{-k_2-1} \zeta_3^{-k_3-1} \zeta_1^{-k_1-1},$$
converges absolutely when $(\zeta_1, \zeta_2, \zeta_3)\in T$. Finally, as the expansion is done in the region
$$\{(\zeta_1, \zeta_2, \zeta_3)\in \C^3: |\zeta_3|>|\zeta_1+\zeta_2|>0, |\zeta_3|>|\zeta_2|>|\zeta_1|>0\}$$
the series also converges absolutely in this region. 
That is to say, in terms of variables $z_1, z_2, z_3$, the series
$$\sum_{k_1, k_2, k_3\in \Z} \langle w', (Y_W^L)_{k_3}((Y_V)_{k_2}((Y_V)_{k_1}(u_1)u_2)u_3)w\rangle (z_2-z_3)^{-k_2-1} z_3^{-k_3-1} (z_1-z_2)^{-k_1-1}$$
converges absolutely to $f(z_1, z_2, z_3)$ when $|z_3|>|z_1-z_3|, |z_3|>|z_2-z_3|>|z_1-z_2|>0$. 
\end{proof}

One can similarly prove the following theorem regarding the rationality of iterates of $n$ vertex operators. 

\begin{thm}\label{n-iter-prop}
For $u_1, u_2, ..., u_n\in V, w\in W, w'\in W'$, the series
$$\langle w', Y_W^L(Y_V(\cdots Y_V(Y_V(u_1, z_1-z_2)u_2, z_2-z_3)u_3 \cdots, z_{n-1}-z_n )u_n, z_n)w \rangle$$
converges absolutely in the region 
\begin{equation}\label{IterRegion}
\left\{(z_1, ..., z_n)\in \C^n: \begin{aligned}
&|z_n|>|z_i-z_n|>0, i = 1, ..., n; \\
&|z_i-z_{i+1}|>|z_j-z_i|>0, 1\leq j < i \leq n-1 \end{aligned}\right\}
\end{equation}
to the rational function that 
$$\langle w', Y_W^L(u_1, z_1)Y_W^L(u_2, z_2)\cdots Y_W^L(u_n, z_n)w\rangle$$ 
converges to. 
\end{thm}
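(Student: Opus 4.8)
The plan is to induct on $n$ — the cases $n\le 3$ being exactly what has just been established — following the two-step template of Proposition \ref{label-1} and the $n=3$ case treated above: first pass from the product of $n$ module vertex operators to the partially folded correlator in which only $u_1$ and $u_2$ have been combined, and then fold $u_3,u_4,\dots,u_n$ into the nest one at a time. Throughout, write $f(z_1,\dots,z_n)$ for the rational function to which $\langle w',Y_W^L(u_1,z_1)\cdots Y_W^L(u_n,z_n)w\rangle$ converges on $|z_1|>\cdots>|z_n|>0$; its existence is part of the module axioms, and its denominator is $\prod_i z_i^{p_i}\prod_{i<j}(z_i-z_j)^{p_{ij}}$. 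The aim is to exhibit the $n$-fold iterate as the absolutely convergent expansion of $f$ on the region (\ref{IterRegion}).

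The move repeated at every stage is this. Having shown that some partially folded correlator converges absolutely, on a nonempty open set, to $f$, freeze all summation (mode) indices except the pair attached to two adjacent factors $Y_W^L(a,\,\cdot\,)$, $Y_W^L(u,\,\cdot\,)$ with $a\in V$; what remains is a genuine product of two module vertex operators applied to a fixed vector, and the two-variable associativity axiom of Definition \ref{DefMOSVA-L} rewrites it as an iterate. Summing the frozen indices back up presents the new, more-folded correlator as an \emph{iterated} series whose innermost groups are absolutely convergent (being subseries of convergent series) and whose value, on a nonempty open subset of the region at hand, is again $f$. One then checks that this iterated series is lower-truncated in the new outer variable (as in the three-variable proofs, from the lower-truncation of the relevant formal series $Y_V(\cdot,x)\cdot$, or of $Y_W^L(\cdot,x)w$ when that variable is $z_n$), and feeds it into Lemma \ref{IterSeries}. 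To apply that lemma one names a connected multicircular domain $T$ and a nonempty open $S\subseteq T$ on which the iterated convergence is already known, and verifies that the lowest power of the new variable in the Laurent expansion of $f$ over $T$ matches the order of the corresponding pole of $f$; both are secured by committing, once and for all, to an order in which the denominator factors of $f$ are expanded (each expanded as a power series in the innermost of its variables, exactly as in steps (1)--(3) used to expand $f$ in the three-variable proofs), with connectedness of $T$ coming from Lemma \ref{SVC-Connected} applied to its trace. Should an expansion about $\infty$ be forced, Lemma \ref{IterSeries-Inf} takes the place of Lemma \ref{IterSeries}. Finally, the explicit step-by-step expansion of $f$ already converges absolutely on the (in general non-multicircular) region $R$ where all those expansions converge, and $T\subseteq R$, so the uniqueness clause of Theorem \ref{SVC-ref} identifies the series produced by Lemma \ref{IterSeries} with it, upgrading convergence to all of $R$.

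Sequencing these moves: the first one (the general-$n$ analogue of Proposition \ref{label-1}) starts from the full product, combines $Y_W^L(u_1,z_1)$ with $Y_W^L(u_2,z_2)$, and peels the $n-2$ trailing variables $z_n,z_{n-1},\dots,z_3$ one at a time by $n-2$ successive applications of Lemma \ref{IterSeries}; each later move folds one more of $u_3,u_4,\dots$ into the nest and peels the single new variable $z_k-z_{k+1}$, using the output of the preceding move as the ``iterated series converges to $f$'' input. After $n-1$ moves one has the $n$-fold iterate as an absolutely convergent Laurent series in $z_1-z_2,\dots,z_{n-1}-z_n,z_n$ on the region $R$; unwinding $\zeta_i=z_i-z_{i+1}$ (and $\zeta_n=z_n$), the conditions defining $R$ collapse, by the triangle inequality, to the inequalities $|z_n|>|z_i-z_n|>0$ and $|z_i-z_{i+1}|>|z_j-z_i|>0$ of (\ref{IterRegion}) — for instance a bound $|\zeta_n|>|\zeta_i|+\cdots+|\zeta_{n-1}|$ gives $|\zeta_n|>|\zeta_i+\cdots+\zeta_{n-1}|=|z_i-z_n|$. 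Running the same argument with $Y_W^R$, and with the mixed correlators of the bimodule axioms, yields the corresponding statements for right modules and bimodules.

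I expect the real difficulty to be organizational rather than conceptual: one must fix an order for expanding the $\binom{n}{2}+n$ denominator factors of $f$, give a uniform description of the chain of nested multicircular domains $T$ and of the open sets $S$ on which the previous stage has already delivered convergence, and then verify the cascade of triangle-inequality inclusions that keep each application of Lemma \ref{IterSeries} (or \ref{IterSeries-Inf}) legitimate and each $S$ nonempty. Every algebraic ingredient — two-operator associativity with modes frozen, the truncation of the iterated series, and the uniqueness of Laurent expansions on a connected multicircular domain — is already present in the $n=3$ proofs and carries over verbatim; the new labor is managing the combinatorics of the $n$-fold regions so that the induction closes, and it is there that one must be careful about the order in which the partially folded correlators are treated so as to avoid circularity.
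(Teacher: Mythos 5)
Your proposal is correct and follows essentially the same route as the paper: the paper's own proof of Theorem \ref{n-iter-prop} is just the one-line remark that one inducts on $n$, carrying out the inductive step by the analytic-continuation argument with Lemma \ref{IterSeries} exactly as in Proposition \ref{label-1} and the $n=3$ theorem, with the technical bookkeeping of regions deferred to \cite{Thesis}. Your outline reproduces that strategy (two-operator associativity with frozen modes, truncation checks, Lemma \ref{SVC-Connected} and Theorem \ref{SVC-ref} for the nested multicircular domains, and the triangle-inequality collapse to the region (\ref{IterRegion})) in somewhat more detail than the paper itself provides.
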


\begin{proof}
One can use induction. The inductive step can be carried out using the analytic continuation with Lemma 
\ref{IterSeries} as shown in the above argument. Details are technical and thus are omitted here. See \cite{Thesis} for details
\end{proof}


\begin{rema}
Similar results hold for right modules and bimodules. We shall state the results here. The proof is almost the identical. 
\end{rema}


\begin{thm}\label{n-iter-prop-R}
For $u_1, u_2, ..., u_n\in V, w\in W, w'\in W'$, the series
$$\langle w', Y_W^R(Y_W^R(\cdots Y_W^R(Y_W^R(w, z_1-z_2)u_1, z_2-z_3)u_2 \cdots, z_{n-1}-z_n )u_{n-1}, z_n)u_n \rangle$$
converges absolutely in the region (\ref{IterRegion})
to the rational function that 
$$\langle w', Y_W^R(w, z_1)Y_V(u_1, z_2)\cdots Y_V(u_{n-1}, z_n)u_n\rangle$$ 
converges to. 
\end{thm}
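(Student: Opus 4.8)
The plan is to run the argument of Section~\ref{Section-Iterate} verbatim, with the left vertex operator map $Y_W^L$ replaced by the right vertex operator map $Y_W^R$ and with the module element $w$ transported from the right end of each product to the left end. By the rationality axiom in the definition of a right $V$-module, for fixed $u_1,\dots,u_{n-1},u_n\in V$, $w\in W$ and $w'\in W'$ the product
$$\langle w', Y_W^R(w,z_1)Y_V(u_1,z_2)\cdots Y_V(u_{n-1},z_n)u_n\rangle$$
already converges absolutely, when $|z_1|>\cdots>|z_n|>0$, to a rational function $f(z_1,\dots,z_n)$ whose only poles are at $z_i=0$ and $z_i=z_j$; this is the function named in the statement, and the whole task is to show that the iterated series is a second Laurent expansion of the same $f$ on the region (\ref{IterRegion}). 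As in Section~\ref{Section-Iterate}, no pole-order condition is needed.

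First I would treat the three-factor case, mirroring Proposition~\ref{label-1} and the theorem that follows it. To fold the two leftmost factors together, fix the index $k_n$ and regard $(Y_V)_{k_n}(u_{n-1})u_n\in V$ as a single vector; the series in the remaining variables is then a product of the type governed by the associativity axiom for right $V$-modules, so coefficient by coefficient it resums to the corresponding iterate $Y_W^R(Y_W^R(w,z_1-z_2)u_1,z_2)\,(Y_V)_{k_n}(u_{n-1})u_n$. Re-multiplying by $z_n^{-k_n-1}$ and summing over $k_n$ is legitimate because $Y_V(u_{n-1},z_n)u_n$ is lower truncated in $z_n$, and this produces an iterated series, lower truncated in its last variable, converging to $f$ on a nonempty open set $S$. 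I would then apply the same changes of variables used in Section~\ref{Section-Iterate} (at the first step $\zeta_1=z_1-z_2$, $\zeta_2=z_2$, $\zeta_3=z_3$, and the analogous substitutions at later steps), verify via Lemma~\ref{SVC-Connected} that the target region $T$ is a connected multicircular domain on which the lowest power of the peeled variable in the Laurent expansion of $f$ equals the pole order along that coordinate hyperplane, and invoke Lemma~\ref{IterSeries} to pass from convergence of the iterated series on $S$ to absolute convergence of the full Laurent series on $T$. Intersecting with the region in which the rational function has this expansion yields the three-factor version; folding successive factors and then inducting on $n$ --- the inductive step being a cosmetic rewrite, exactly as in the proof of Theorem~\ref{n-iter-prop} --- gives the general statement.

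The only genuine work, and the one place to be careful, is the bookkeeping at each peeling step: one must exhibit the connected multicircular domain $T$ and check the hypothesis of Lemma~\ref{IterSeries}, i.e. that after the substitution the factor $(\text{peeled variable})^{p}$ in the denominator of $f$ is the sole source of negative powers of that variable while every factor of the form $(z_i-z_j)$ expands into nonnegative powers of it; and one must confirm that the region (\ref{IterRegion}) sits inside the common domain of the successive expansions. The interchange of the algebra element and the module element at the two ends, characteristic of the right-module setting, causes no difficulty: lower truncation is invoked only for $Y_V(u_{n-1},z_n)u_n$ and for the accumulated right-module vertex operators $Y_W^R(w,z_1-z_2)u_1$, etc., each of which is lower truncated in its own variable by the creation property and Proposition~\ref{ImmediateFacts-R}. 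Every remaining step parallels Section~\ref{Section-Iterate} line for line.
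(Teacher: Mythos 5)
Your proposal is correct and is exactly the route the paper intends: the paper itself only remarks that the right-module case is "almost identical" to Section 4.2, and your adaptation — folding the two leftmost factors via the right-module associativity axiom with the rightmost index fixed, invoking lower truncation of $Y_V(u_{n-1},x)u_n$ and of $Y_W^R(\cdot,x)\cdot$ from Proposition \ref{ImmediateFacts-R}, and then applying Lemma \ref{IterSeries} after the same changes of variables — is the correct instantiation of that remark. No gaps.
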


\begin{thm}
For $u_1, ..., u_n, u_{n+1}, ... u_{n+m}\in V, w\in W, w'\in W'$, the series
$$\langle w', Y_W^R( \cdots Y_W^R(Y_W^L({Y_V}( \cdots ({Y_V}({u_1},{z_1} - {z_2}) \cdots {u_n},{z_n} - {z_{n + 1}})w,{z_{n + 1}} - {z_{n + 2}}) \cdots {u_{n + m - 1}},{z_{n + m}}){u_{n + m}}\rangle $$
converges absolutely in the region (\ref{IterRegion})
to the rational function determined by 
$$\langle w', Y_W^L(u_1, z_1) \cdots Y_W^L(u_n, z_n) Y_W^R(w, z_{n+1}) Y_V(u_{n+1}, z_{n+2})\cdots Y_V(u_{n+m-1}, z_{n+m})u_{n+m}\rangle$$
\end{thm}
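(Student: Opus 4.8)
The plan is to argue exactly as in the proofs of Theorems \ref{n-iter-prop} and \ref{n-iter-prop-R}: beginning from the rationality of the product series, I would perform a finite sequence of analytic continuations, one complex variable at a time, each effected by Lemma \ref{IterSeries} (or Lemma \ref{IterSeries-Inf} should an upper-truncated iterated series arise), until the product is transformed into the iterated series in the statement. Because every series occurring is paired against a fixed $w'\in W'$, the whole argument takes place among scalar Laurent series and needs no grading-restriction hypothesis, just as in Section \ref{Section-Iterate}. The only new feature, compared with the purely left and purely right cases, is that at the single step joining the block $Y_W^L(u_1,z_1),\dots,Y_W^L(u_n,z_n)$ to the block $Y_W^R(w,z_{n+1}),Y_V(u_{n+1},z_{n+2}),\dots$, one invokes the associativity of left and right vertex operator maps supplied by the compatibility axiom, i.e. $\langle w', Y_W^L(u,z_1)Y_W^R(w,z_2)v\rangle=\langle w', Y_W^R(Y_W^L(u,z_1-z_2)w,z_2)v\rangle$, rather than left- or right-module associativity.

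In detail, I would first reduce to homogeneous $w',u_1,\dots,u_{n+m},w$ so that the weight bookkeeping of Remark \ref{numdegree} is available. Combining the rationality of products of left and right vertex operator maps (the compatibility axiom) with the sub-rationality statements for $Y_V$, $Y_W^R$ and $Y_W^L$, which confine the possible poles, the product series converges absolutely for $|z_1|>\cdots>|z_{n+m}|>0$ to a rational function $f(z_1,\dots,z_{n+m})$ whose only poles lie at $z_i=0$ and $z_i=z_j$; write $f=g/\bigl(\prod_i z_i^{p_i}\prod_{i<j}(z_i-z_j)^{p_{ij}}\bigr)$. The transformation of the product into the iterate then proceeds in three phases: (1) collapse $Y_W^L(u_1,z_1)\cdots Y_W^L(u_n,z_n)$ into a single $Y_W^L(A,z_n)$ with $A=Y_V(\cdots Y_V(u_1,z_1-z_2)u_2\cdots,z_{n-1}-z_n)u_n$, exactly as in Theorem \ref{n-iter-prop}; (2) bridge $Y_W^L(A,z_n)Y_W^R(w,z_{n+1})$ into $Y_W^R(Y_W^L(A,z_n-z_{n+1})w,z_{n+1})$ using the compatibility associativity; (3) collapse $Y_W^R(\cdot,z_{n+1})Y_V(u_{n+1},z_{n+2})\cdots u_{n+m}$ into the nested $Y_W^R$-iterate, exactly as in Theorem \ref{n-iter-prop-R}. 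Each phase is itself an induction of the type already established, made rigorous by the same device used throughout Section \ref{Section-Iterate}: to merge two adjacent operators one freezes the summation indices of every operator not involved --- which turns the relevant sub-expression into an honest composition of finitely many vertex operators applied to genuine vectors of $V$ and $W$ --- applies the relevant associativity, and re-sums each frozen index with Lemma \ref{IterSeries}. Each appeal to Lemma \ref{IterSeries} requires checking that (i) the iterated series being re-summed is lower-truncated in the variable re-summed, which follows from the truncation statements of Propositions \ref{ImmediateFacts}, \ref{ImmediateFacts-L}, \ref{ImmediateFacts-R} and Remark \ref{numdegree}, and (ii) on the connected multicircular domain chosen for the expansion the lowest power of that variable equals the order of the corresponding pole of $f$, which one arranges by expanding the linear factors $z_i-z_j$ (equivalently, the pertinent sums of difference variables) in a suitable order, exactly as in the proof of Proposition \ref{label-1}. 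The successive regions of absolute convergence are tracked as in Theorem \ref{n-iter-prop}: each expansion of a negative power of a sum of difference variables is valid under an inequality $|\text{larger}|>|\text{smaller}|$, and the region (\ref{IterRegion}) is precisely where all these inequalities hold at once (the triangle inequality entering just as in the three-operator cases). At the end the iterated series converges absolutely to $f$ on (\ref{IterRegion}).

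I expect the main obstacle to be precisely the uniform verification of the two hypotheses of Lemma \ref{IterSeries} across the whole induction --- that at every merge the iterated series really is lower-truncated in the peeled variable, and that the multicircular domain used for the Laurent expansion is one on which the lowest power of that variable matches the pole order of $f$. This is technical but entirely parallel to what is already carried out in Proposition \ref{label-1} and the three-operator iterate theorem that follows it; the bridging step contributes only a new algebraic identity, not a new analytic difficulty. As with Theorem \ref{n-iter-prop}, once the three- and four-operator cases are in hand the general induction is routine, so I would write out the bridging merge in full, indicate the pattern for the rest, and otherwise refer to \cite{Thesis}.
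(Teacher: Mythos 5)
Your proposal matches the paper's intended argument: the paper itself says only that the proof of this mixed case is ``almost identical'' to those of Theorems \ref{n-iter-prop} and \ref{n-iter-prop-R} (which are in turn proved by induction with Lemma \ref{IterSeries}, details deferred to \cite{Thesis}), and your outline --- successive merges of adjacent operators via associativity followed by re-summation with Lemma \ref{IterSeries}, tracking the regions of convergence as in Proposition \ref{label-1} --- is exactly that argument. You also correctly isolate the one genuinely new ingredient, the bridging step between the left and right blocks supplied by the bimodule compatibility (associativity of left and right vertex operator maps), which the paper leaves implicit.
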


\begin{rema}
For mixtures of products and iterates, we can use the same argument to determine their regions of convergence. 
\end{rema}

\section{Opposite MOSVAs and Modules}

In this section, for a given MOSVA, we will introduce its opposite MOSVA using the skew-symmetry opposite vertex operator. This can be viewed as the analogue of the opposite associative algebra of a given associative algebra. Analogously, we prove that a right module for the MOSVA is the same as the left module for the opposite MOSVA, and a left module for the MOSVA is the same as the right module for the opposite MOSVA. The rationality of iterates we proved in the Section \ref{Section-Iterate} will be used in the proofs of these theorems. 

\subsection{The skew-symmetry opposite vertex operator} \label{Skew-Symm-Section}

\begin{defn}
Let $(V, Y_V, \one)$ be a MOSVA. The \textit{skew-symmetry opposite vertex operator} $Y_V^s$ of $Y_V$ is defined as follows 
$$\begin{aligned}
&Y_V^s: & V\otimes V &\to V[[x, x^{-1}]]\\
&& u\otimes v &\mapsto e^{xD_V} Y_V(v, -x)u
\end{aligned}$$
\end{defn}

One sees easily that the series defining the skew-symmetry opposite vertex operator is lower truncated. 

\begin{prop}
For $v'\in V', u_1, u_2, v\in V$, the double complex series 
$$\langle v', Y_V^s(u_1, z_1)Y_V^s(u_2, z_2)v\rangle $$
converges absolutely when $|z_1|>|z_2|>0$ to a rational function with the only possible poles $z_1=0, z_2=0, z_1=z_2$.
\end{prop}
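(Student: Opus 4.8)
The plan is to reduce the statement about $Y_V^s$ to the rationality of iterates of ordinary vertex operators established in Section~\ref{Section-Iterate}. By definition,
$$
\langle v', Y_V^s(u_1,z_1)Y_V^s(u_2,z_2)v\rangle
= \langle v', e^{z_1 D_V} Y_V\!\big(e^{z_2 D_V}Y_V(v,-z_2)u_2,-z_1\big)u_1\rangle,
$$
so the first task is to rewrite the right-hand side purely in terms of $Y_V$ applied to ordinary arguments. Using the $D$-conjugation property (Part~(4) of Proposition~\ref{ImmediateFacts}) to move the $e^{z_1 D_V}$ past the outer vertex operator, and then again to absorb the inner $e^{z_2 D_V}$, I expect to land on an expression of the shape
$$
\langle v', Y_V\big(Y_V(u_2, \zeta)v,\,z_2\big)u_1\rangle
$$
evaluated at shifted arguments — i.e.\ an \emph{iterate} of two vertex operators with the roles of the insertion points permuted and reflected. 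The key algebraic identity I will need is the standard consequence of $D$-conjugation that $e^{zD_V}Y_V(a,x)e^{-zD_V} = Y_V(a,x+z)$ together with the creation property $Y_V(a,x)\one = e^{xD_V}a$; careful bookkeeping of the signs $-z_1,-z_2$ and the shifts is the bulk of the computation.

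**Key steps, in order.** First, expand $Y_V^s$ by definition and use $D$-conjugation to pull all the $e^{\bullet D_V}$ operators to the outside, rewriting $\langle v', Y_V^s(u_1,z_1)Y_V^s(u_2,z_2)v\rangle$ as a matrix coefficient of a genuine iterate $\langle \tilde v', Y_V(Y_V(u_2,\zeta_1-\zeta_2)v,\zeta_2)u_1\rangle$ (or a products/iterates expression) in auxiliary variables $\zeta_1,\zeta_2$ that are affine-linear in $z_1,z_2$ — concretely something like $\zeta_1 = -z_1$, $\zeta_2=-z_2$ up to the translations forced by the exponentials. Second, invoke the rationality of iterates of two vertex operators (which for $n=2$ is part of Definition~\ref{DefMOSVA}, and more generally Theorem~\ref{n-iter-prop}) to conclude that this expression converges absolutely in the appropriate region in the $\zeta$-variables to a rational function whose only poles are at $\zeta_1=0$, $\zeta_2=0$, $\zeta_1=\zeta_2$. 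Third, translate the region and the pole locus back through the affine substitution: $\zeta_i = 0$ corresponds to $z_i=0$, and $\zeta_1=\zeta_2$ corresponds to $z_1=z_2$, so the pole set is exactly as claimed; one checks that the region $|z_1|>|z_2|>0$ maps into (or can be reached by analytic continuation from) the region of convergence of the iterate. Fourth, confirm absolute convergence on $|z_1|>|z_2|>0$: the $e^{z_iD_V}$ factors only contribute entire functions of $z_i$ acting degreewise, so they do not spoil absolute convergence, and one uses the fact that on each graded piece only finitely many terms contribute to a given output weight (as in Remark~\ref{numdegree}).

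**Main obstacle.** The delicate point is matching up \emph{regions of convergence} after the affine change of variables, since $Y_V^s(u_1,z_1)Y_V^s(u_2,z_2)$ is a genuine product (two independent expansions) whereas after applying $D$-conjugation the natural object is an iterate, and the ``obvious'' region $|z_1|>|z_2|>0$ for the product need not literally coincide with the region in Theorem~\ref{n-iter-prop} where the iterate expansion converges. I expect to handle this exactly as in the proof of Proposition~\ref{label-1}: identify a nonempty common open subset of the two multicircular domains where both the product expansion of $Y_V^s$ and the iterate expansion of the $Y_V$-side converge to the same holomorphic function, then use Theorem~\ref{SVC-ref} (uniqueness of Laurent expansions on connected multicircular domains) together with Lemma~\ref{IterSeries} to propagate absolute convergence of the $Y_V^s$-product series to the full region $|z_1|>|z_2|>0$. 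A secondary bookkeeping nuisance is that the $e^{xD_V}$ in the definition of $Y_V^s$ introduces, after expansion, an infinite sum in powers of $z_1$ and $z_2$ that must be shown to recombine correctly; this is controlled because $D_V$ raises weight by one, so for fixed $v'$ and fixed target weight only finitely many terms survive, exactly as in the upper-truncation arguments of Section~3.
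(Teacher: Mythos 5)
Your proposal follows essentially the same route as the paper's proof: rewrite $\langle v', Y_V^s(u_1,z_1)Y_V^s(u_2,z_2)v\rangle$ via the definition of $Y_V^s$ and $D$-conjugation as (a matrix coefficient of) the iterate $Y_V(Y_V(v,-z_2)u_2,-z_1+z_2)u_1$, invoke the rationality of iterates, and use Lemma \ref{IterSeries} together with uniqueness of Laurent expansions on connected multicircular domains to transfer absolute convergence to the region $|z_1|>|z_2|>0$, disposing of the outer $e^{z_1D_V}$ by its finite action through the adjoint on $V'$. The only differences are cosmetic: your displayed target iterate $Y_V(Y_V(u_2,\zeta_1-\zeta_2)v,\zeta_2)u_1$ has $u_2$ and $v$ in swapped positions (your first display has it right), and the paper makes the resummation of the inner $e^{z_2D_V}$ precise by introducing an auxiliary variable $\zeta_2$, applying Lemma \ref{IterSeries} to the resulting three-variable series, and then specializing $\zeta_2=z_2$.
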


\begin{proof}
The proof will be divided in three steps. 
\begin{enumerate}
\item From the conclusion of Theorem \ref{n-iter-prop} that the series
$$\langle v', Y_V(Y_V(v, -z_2)u_2, -z_1+\zeta_2)u_1\rangle$$
converges absolutely when $|-z_1+\zeta_2|>|z_2|>0$ to a rational function that has the only possible poles at $z_2=0, z_1=\zeta_2, z_1+z_2 = \zeta_2$, with the $D$-conjugation property (See Part (4) of Proposition \ref{ImmediateFacts}) and Lemma \ref{IterSeries}, we can prove that the series
$$\langle v', Y_V(e^{\zeta_2 D_V} Y_V(v, -z_2)u_2, -z_1)u_1\rangle$$
converges absolutely when $|z_1|>|z_2-\zeta_2|, |z_1|>|\zeta_2|, |z_2|>0$ to a rational function that has the only possible poles at $z_1+z_2=\zeta_2, z_2=0, z_1=\zeta_2$. The argument is very similar to that in the proof of Theorem \ref{n-iter-prop} and is omitted here. 
\item Since $\zeta_2= z_2$ is contained in the region of the convergence, we then evaluate $\zeta_2=z_2$ to see that the series 
$$\langle v', Y_V(e^{z_2 D_V} Y_V(v, -z_2)u_2, -z_1)u_1\rangle$$
converges absolutely when $|z_1|>|z_2|>0$ to the rational function determined by 
$$\langle v', Y_V(Y_V(v, -z_2)u_2, -z_1+z_2)u_1\rangle$$
that has the only possible poles at $z_1=0, z_2=0, z_1=z_2$. 
\item Now we argue that for every $v'\in V', u_1, u_2, v\in V$, the series
\begin{equation}\label{ezD}
\langle v', e^{z_1D_V}Y_V(e^{z_2D_V}Y_V(v, -z_2)u_2, -z_1)u_1\rangle
\end{equation}
converges absolutely in the same region $S$. We first note that the adjoint $D_V': V^* \to V^*$ of $D_V$, defined by 
$$\langle D_V' v', v\rangle = \langle v', D_V v\rangle, v'\in V', v\in V, $$
restrics to a homogeneous linear operator on $V'$ of weight $-1$. Thus for every $z\in \C^\times$, the action of $e^{zD_V'}$ on $v'\in V'$ is a finite sum of elements of $V'$. So the series (\ref{ezD}) is the same as 
$$\langle e^{z_1D_V'}v', Y_V(e^{z_2D_V}Y_V(v, -z_2)u_2, -z_1)u_1\rangle$$
which is a finite sum of series that converges absolutely to rational functions with the only possible poles at $z_1=0, z_2=0, z_1=z_2$. Thus the sum also converges absolutely in the same region to a rational function of the same type. 
\end{enumerate}
\end{proof}

\begin{rema}\label{expD}
The argument we have used in dealing with $e^{zD}$ operator can be generalized to products and iterates of any numbers of vertex operators. One should also note that we don't need $V$ to be grading-restricted. The same result also holds for left modules, right modules and bimodules for MOSVAs. For brevity, in the future we will not repeat the argument, but refer to this remark when we need the $e^{zD}$ operator. 
\end{rema}

\begin{prop} \label{n-prod-opp}
For $v'\in V', u_1, ..., u_n, v\in V$, the complex series 
$$\langle v', Y_V^s(u_1, z_1)\cdots Y_V^s(u_n, z_n)v\rangle $$
converges absolutely when $|z_1|>\cdots > |z_n|>0$ to a rational function with the only possible poles at $z_i=0, i=1,..., n; z_i = z_j, 1\leq i < j \leq n. $
\end{prop}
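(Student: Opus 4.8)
The plan is to deduce the statement from the rationality of iterates of $n$ ordinary vertex operators, i.e.\ from Theorem \ref{n-iter-prop} applied to $V$ regarded as a left module over itself, by unwinding the definition of $Y_V^s$. Iterating $Y_V^s(u,x)v=e^{xD_V}Y_V(v,-x)u$ gives
$$Y_V^s(u_1,z_1)\cdots Y_V^s(u_n,z_n)v=e^{z_1D_V}Y_V\bigl(e^{z_2D_V}Y_V(\cdots e^{z_nD_V}Y_V(v,-z_n)u_n\cdots,-z_2)u_2,-z_1\bigr)u_1 .$$
Pairing with $v'$ and transferring the outermost $e^{z_1D_V}$ onto $v'$ through its adjoint $D_V'$, which has weight $-1$ and is therefore locally nilpotent on $V'$, so that $e^{z_1D_V'}v'$ is a finite sum of homogeneous covectors (as in step (3) of the two-operator case and in Remark \ref{expD}), it suffices to treat each summand separately; that is, one may assume $v'$ homogeneous and discard the leading exponential.

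Next I would absorb the remaining exponentials $e^{z_kD_V}$, $k=2,\dots,n$, one at a time from the inside out, using the $D$-conjugation property (Part (4) of Proposition \ref{ImmediateFacts}): absorbing $e^{z_kD_V}$ rewrites $Y_V(e^{z_kD_V}A,-z_{k-1})$ as $Y_V(A,-z_{k-1}+\zeta_k)$ for a fresh variable $\zeta_k$. Since this $z_k$ already occurs inside $A$, I would keep $\zeta_k$ formally distinct, establish absolute convergence of the rearranged iterated series by one application of Lemma \ref{IterSeries} (exactly as in step (1) of the two-operator proof and its analogue in the three-operator iterate argument), and only then specialize $\zeta_k=z_k$, after checking that this point lies in the region of convergence just produced. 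After all $n-1$ specializations the expression becomes the iterate
$$\langle \tilde v',\,Y_V(Y_V(\cdots Y_V(Y_V(v,-z_n)u_n,-z_{n-1}+z_n)u_{n-1}\cdots,-z_2+z_3)u_2,-z_1+z_2)u_1\rangle ,$$
which, after the affine relabelling $\zeta_1=-z_1$, $\zeta_j=-z_1+z_{n+2-j}$ for $2\le j\le n$ and the reindexing sending the arguments of Theorem \ref{n-iter-prop} to $v,u_n,u_{n-1},\dots,u_2$ and its module element to $u_1$, is an iterate of $n$ ordinary vertex operators of precisely the shape covered by that theorem (with $W=V$). Hence it converges absolutely, in the corresponding region, to the rational function determined by $\langle \tilde v',Y_V(v,\zeta_1)Y_V(u_n,\zeta_2)\cdots Y_V(u_2,\zeta_n)u_1\rangle$; since the poles of that function lie only on the coordinate and diagonal hyperplanes in the $\zeta$-variables and the affine substitution carries those hyperplanes to $z_i=0$ and $z_i=z_j$, the resulting function of $z_1,\dots,z_n$ has only the asserted poles.

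The routine but delicate point --- and what I expect to be the main obstacle --- is the bookkeeping of domains: at each of the $n-1$ continuation steps one must verify that the open set on which the iterated series is already known to converge is a nonempty subset of the connected multicircular domain demanded by Lemma \ref{IterSeries} (connectedness being checked via Lemma \ref{SVC-Connected}), and that after all the specializations $\zeta_k=z_k$ the common region of convergence contains, or circularly completes to, $\{|z_1|>\cdots>|z_n|>0\}$. This is a direct but increasingly cumbersome generalization of the explicit region computations carried out for two and three operators in Section \ref{Section-Iterate}, so I would run the induction on $n$ and, following the paper's convention, record the defining inequalities of the intermediate domains at the general step while deferring the full verification to \cite{Thesis}. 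Equivalently, one may phrase the whole argument as an induction on $n$ directly: peel off $Y_V^s(u_1,z_1)$ from the front, apply the inductive hypothesis to $Y_V^s(u_2,z_2)\cdots Y_V^s(u_n,z_n)v$, and recombine via Lemma \ref{IterSeries} and the $e^{zD}$ device of Remark \ref{expD}; the domain bookkeeping is the same in either presentation.
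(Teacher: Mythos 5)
Your proposal follows essentially the same route as the paper's proof: unwind $Y_V^s$ into nested $e^{z_kD_V}$-conjugated iterates of $Y_V$, introduce auxiliary variables $\zeta_k$ and use the $D$-conjugation property together with Lemma \ref{IterSeries} and Theorem \ref{n-iter-prop} to establish absolute convergence before specializing $\zeta_k=z_k$, and dispose of the outermost exponential by moving it onto $v'$ via the locally nilpotent adjoint $D_V'$ as in Remark \ref{expD}. The only cosmetic difference is that you peel off the leading $e^{z_1D_V}$ at the start rather than at the end, and like the paper you defer the explicit multicircular-domain bookkeeping.
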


\begin{proof}
Likewise, the proof is divided into three steps. For brevity, we only state the conclusions of each step. 
\begin{enumerate}
\item With the conclusion of Theorem \ref{n-iter-prop}, the $D$-conjugation property (See Part (4) of Proposition \ref{ImmediateFacts}) and Lemma \ref{IterSeries} we can prove that the series
$$\langle v', Y_V(e^{\zeta_2D_V}\cdots Y_V(e^{\zeta_{n-1}D_V}Y_V(e^{\zeta_n D_V}Y_V(v, -z_n)u_n,-z_{n-1})u_{n-1},-z_{n-2})\cdots, -z_1)u_1\rangle$$
converges absolutely when 
\begin{align*}
|z_k|&>|\zeta_{k+1}+ (-z_{k+1} + \zeta_{k+2}) + \cdots + (-z_{n-1}+\zeta_n)-z_n|, k = 1, ..., n-1,\\
|z_k|&>|\zeta_{k+1}+ (-z_{k+1} + \zeta_{k+2}) + \cdots + (-z_i + \zeta_{i+1})|, k = 1, ..., n-1, i = k, ..., n-1.
\end{align*}
to the rational function determined by 
$$\langle v', Y_V(\cdots Y_V(Y_V(Y_V(v, -z_n)u_n,-z_{n-1}+\zeta_n)u_{n-1},-z_{n-2}+\zeta_{n-1})\cdots, -z_1+\zeta_2)u_1\rangle$$
that has the only possible poles at 
\begin{align*}
-z_n + (z_{n-1}+\zeta_n) + \cdots + (-z_k + \zeta_{k+1}) &= 0, k = 1, ..., n-1; \\
(-z_i + \zeta_{i+1}) + \cdots + (-z_k + \zeta_{k+1}) &= 0, k = 1, ..., n-1, i = k, ..., n-1. 
\end{align*}
\item Since $\zeta_i= z_i, i = 2, ..., n$ is contained in the region of the convergence, we then evaluate $\zeta_i=z_i$ for every $i=2, ..., n$, to see that the series 
$$\langle v', Y_V(e^{z_2D_V}\cdots Y_V(e^{z_{n-1}D_V}Y_V(e^{z_n D_V}Y_V(v, -z_n)u_n,-z_{n-1})u_{n-1},-z_{n-2})\cdots, -z_1)u_1\rangle$$
converges absolutely when $|z_1|>\cdots > |z_n|>0$ to the rational function determined by 
$$\langle v', Y_V(\cdots Y_V(Y_V(Y_V(v, -z_n)u_n,-z_{n-1}+z_n)u_{n-1},-z_{n-2}+z_{n-1})\cdots, -z_1+z_2)u_1\rangle$$
that has the only possible poles at $z_i=0, i = 1, ..., n; z_i=z_j, 1\leq i < j \leq n$. 
\item Finally we use Remark \ref{expD} to conclude that the series
$$\langle v', Y_V^s(u_1, z_1)\cdots Y_V^s(u_n, z_n)v\rangle, $$
which is precisely 
$$\langle v', e^{z_1D_V}Y_V(e^{z_2D_V}\cdots Y_V(e^{z_{n-1}D_V}Y_V(e^{z_n D_V}Y_V(v, -z_n)u_n,-z_{n-1})u_{n-1},-z_{n-2})\cdots, -z_1)u_1\rangle,$$
converges absolutely when $|z_1|>\cdots > |z_n|>0$ to a rational function that has the same types of poles. 
\end{enumerate}
\end{proof}

\begin{prop} \label{2-iter-opp}
For $v'\in V', u_1, u_2, v\in V$, the complex double series
$$\langle v', Y_V^s(Y_V^s(u_1,z_1-z_2)u_2, z_2)v\rangle$$
converges absolutely when $|z_2|>|z_1-z_2|>0$ to a rational function with the only possible poles at $z_1=0, z_2=0, z_1=z_2$
\end{prop}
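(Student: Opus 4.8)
The plan is to follow the three-step pattern of the preceding propositions on products of skew-symmetry opposite vertex operators: rewrite the iterate of $Y_V^s$'s in terms of ordinary vertex operators of $V$ with an outer factor $e^{zD_V}$, dispose of that factor by Remark \ref{expD}, and obtain absolute convergence and rationality on $|z_2|>|z_1-z_2|>0$ from the rationality of iterates (Theorem \ref{n-iter-prop}) together with the analytic-continuation Lemma \ref{IterSeries}. The first step is the formal identity
\[
Y_V^s\bigl(Y_V^s(u_1,z_1-z_2)u_2,z_2\bigr)v=e^{z_1D_V}\,Y_V(v,-z_1)\,Y_V(u_2,z_2-z_1)\,u_1,
\]
obtained by unwinding $Y_V^s(u,x)w=e^{xD_V}Y_V(w,-x)u$ twice and using the $D$-conjugation property (Part (4) of Proposition \ref{ImmediateFacts}) to replace $Y_V(v,-z_2)e^{(z_1-z_2)D_V}$ by $e^{(z_1-z_2)D_V}Y_V(v,-z_1)$. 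Pairing with $v'$ and moving $e^{z_1D_V}$ to the first slot via Remark \ref{expD} (the adjoint $D_V'$ is homogeneous of weight $-1$, hence locally nilpotent on $V'$, so $e^{z_1D_V'}v'$ is a finite sum of homogeneous functionals depending polynomially on $z_1=(z_1-z_2)+z_2$), it suffices to show that for every $\psi\in V'$ the series $\langle\psi,Y_V(v,-z_1)Y_V(u_2,z_2-z_1)u_1\rangle$, re-expanded (as the left-hand side dictates) in powers of $z_1-z_2$ and $z_2$, converges absolutely on $|z_2|>|z_1-z_2|>0$. By the rationality axiom in Definition \ref{DefMOSVA} the product $\langle\psi,Y_V(v,w_1)Y_V(u_2,w_2)u_1\rangle$ converges on $|w_1|>|w_2|>0$ to a rational function with poles only at $w_1=0$, $w_2=0$, $w_1=w_2$; putting $w_1=-z_1$, $w_2=z_2-z_1$ produces a rational function $f$ of $(z_1,z_2)$ with poles only at $z_1=0$, $z_2=0$, $z_1=z_2$, to which the series above must be shown to converge on the enlarged region $|z_2|>|z_1-z_2|>0$.

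The point requiring work is that this target region is contained neither in the natural domain $|z_1|>|z_1-z_2|>0$ of the product expansion $Y_V(v,-z_1)Y_V(u_2,z_2-z_1)u_1$ nor in the natural domain of the corresponding iterate, so a genuine analytic continuation is needed, exactly as in the proof of Proposition \ref{label-1}. I would set $\zeta_1=z_1-z_2$, $\zeta_2=z_2$ and take $T=\{(\zeta_1,\zeta_2):|\zeta_2|>|\zeta_1|>0\}$, which is a connected multicircular domain by Lemma \ref{SVC-Connected}; the rational function $f$ is holomorphic on $T$, and (because the singularity coming from $z_1=0$ sits at $|\zeta_1|=|\zeta_2|$, hence outside the $\zeta_1$-disc used in the expansion) its Laurent expansion on $T$ has lowest $\zeta_1$-power equal to the order of its pole at $\zeta_1=0$, which is the hypothesis of Lemma \ref{IterSeries} with $\zeta_1$ in the role of the last variable. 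On the nonempty open subset $S=T\cap\{|z_1|>|z_1-z_2|>0\}$ the product expansion converges, and since $|z_2|>|z_1-z_2|$ there too the re-expansion in $\zeta_1,\zeta_2$ is legitimate and gives $\langle v',Y_V^s(Y_V^s(u_1,\zeta_1)u_2,\zeta_2)v\rangle$ converging to $f$; this series is lower-truncated in $\zeta_1$ (since $Y_V^s(u_1,\zeta_1)u_2$ is), and for each fixed power of $\zeta_1$ its $\zeta_2$-coefficient series is a finite sum (coming from $e^{\zeta_2D_V'}v'$) of one-variable vertex-operator matrix coefficients, hence converges for every $\zeta_2\ne0$, in particular on the projection of $S$. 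Lemma \ref{IterSeries} then upgrades the convergence to all of $T$, i.e.\ absolute convergence on $|z_2|>|z_1-z_2|>0$ to $f$; since $-z_1=0$, $z_2-z_1=0$ and $-z_1=z_2-z_1$ correspond to $z_1=0$, $z_1=z_2$ and $z_2=0$, the poles are as stated (and $f$ is moreover the same rational function to which the product $\langle v',Y_V^s(u_1,z_1)Y_V^s(u_2,z_2)v\rangle$ converges).

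The main obstacle is the bookkeeping of multicircular domains and expansion conventions: one must check that $T$ is connected, that the Laurent expansion of $f$ on $T$ satisfies the hypothesis of Lemma \ref{IterSeries} for the chosen ordering of variables, that $S$ is a nonempty open subset of $T$ on which the re-expanded series genuinely converges to $f$, and --- as in Theorem \ref{n-iter-prop} and Propositions \ref{label-1} and \ref{n-prod-opp} --- that the intermediate identities involving $e^{(z_1-z_2)D_V}$ in a middle slot are invoked only after the relevant series are known to converge, introducing an auxiliary variable for that exponent if necessary. No grading-restriction hypothesis is used anywhere, and the same argument applies verbatim to left modules, right modules and bimodules.
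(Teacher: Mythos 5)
Your proposal is correct and follows essentially the same route as the paper: unwind the two $Y_V^s$'s into $e^{z_1D_V}Y_V(v,-z_1)Y_V(u_2,z_2-z_1)u_1$ via the $D$-conjugation property, strip the outer exponential using Remark \ref{expD}, and use Lemma \ref{IterSeries} (with the auxiliary-variable device for the middle exponential, exactly the paper's $\zeta$) to continue the product expansion from $|z_1|>|z_1-z_2|>0$ to $|z_2|>|z_1-z_2|>0$. The paper merely organizes the same steps in a slightly different order --- it first establishes convergence of the three-variable series containing the auxiliary $\zeta$ and then evaluates $\zeta=-z_1+z_2$ --- but the ingredients, the choice of multicircular domains, and the identification of the poles are the same.
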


\begin{proof}
\begin{enumerate}
\item With the rationality of products the $D$-conjugation property and Lemma \ref{IterSeries}, we can prove that the series
$$\langle v', e^{\zeta D_V}Y_V(v, -z_2)e^{-\zeta D_V}Y_V(u_2, -z_1+z_2)u_1\rangle$$
converges absolutely when $|z_2|>|\zeta|, |z_2|>|z_1-z_2+\zeta|, |z_1-z_2|>0$
to the rational function determined by 
$$\langle v', Y_V(v, -z_2+\zeta) Y_V(u_2, -z_1+z_2)u_1\rangle$$
that has the only possible poles at $z_2=\zeta, z_1=z_2, z_1-z_2+\zeta=z_2$
\item Since $\zeta= -z_1+z_2$ is contained in the region of the convergence, we then evaluate $\zeta=-z_1+z_2$ to see that the series 
$$\langle v', e^{(-z_1+z_2) D_V}Y_V(v, -z_2)e^{(z_1-z_2) D_V}Y_V(u_2, -z_1+z_2)u_1\rangle$$
converges absolutely when $|z_2|>  |z_1-z_2|>0$ to the rational function determined by 
$$\langle v', Y_V(v, -z_1) Y_V(u_2, -z_1+z_2)u_1\rangle$$
that has the only possible poles at $z_1=0, z_1=z_2, z_2 = 0$. 
\item Finally we use Remark \ref{expD} to conclude that the series
$$\langle v', Y_V^s(Y_V^s(u_1, z_1-z_2)u_2, z_2)v\rangle, $$
which is precisely 
$$\langle v', e^{z_2D_V}Y_V(v, -z_2)e^{(z_1-z_2)D_V}Y_V(u_2, -z_1+z_2)u_1\rangle,$$
converges absolutely when $|z_2|>|z_1-z_2|>0$ to a rational function that has the same types of poles. 
\end{enumerate}
\end{proof}



\subsection{The opposite MOSVA}

\begin{thm}\label{oppMOSVA}

Given a MOSVA $(V, Y_V, \one)$, with the skew-symmetry vertex operator map
$$\begin{aligned}
&Y_V^s: & V\otimes V &\to V[[x, x^{-1}]]\\
&& u\otimes v &\mapsto e^{xD_V} Y_V(v, -x)u. 
\end{aligned}$$
$(V, Y_V^s, \one)$ is also a MOSVA. 
\end{thm}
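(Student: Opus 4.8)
The plan is to verify the five axioms of Definition \ref{DefMOSVA} for $(V, Y_V^s, \one)$ one at a time, using part (3) of Proposition \ref{ImmediateFacts} for the algebraic axioms and Propositions \ref{n-prod-opp} and \ref{2-iter-opp} for the analytic ones. The underlying graded space and its lower bound are unchanged, and the lower-truncation of $Y_V^s(u,x)v$ was already noted right after the definition of $Y_V^s$, so $Y_V^s$ is a legitimate vertex operator map of the required shape; the work is in checking the axioms.

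First I would settle the grading, vacuum and $D$-axioms by formal computation. For the identity and creation properties one invokes $Y_V(v,x)\one = e^{xD_V}v$: indeed $Y_V^s(\one, x)v = e^{xD_V}Y_V(v,-x)\one = e^{xD_V}e^{-xD_V}v = v$, and $Y_V^s(u,x)\one = e^{xD_V}Y_V(\one,-x)u = e^{xD_V}u \in V[[x]]$ which tends to $u$ as $x\to 0$; differentiating the latter and letting $x\to 0$ identifies the operator $D$ attached to $Y_V^s$ with $D_V$ itself. The $\d$-commutator formula for $Y_V^s$ follows from the $\d$-commutator formula for $Y_V$ together with $[\d_V, e^{xD_V}] = x\frac{d}{dx}e^{xD_V}$ (a consequence of $D_V$ having weight $1$, i.e.\ $[\d_V, D_V^n] = nD_V^n$): pushing $\d_V$ past the leading $e^{xD_V}$ produces the $x\frac{d}{dx}e^{xD_V}$ term, which together with the $x\frac{d}{dx}Y_V(v,-x)$ term coming from the inner $\d$-commutator assembles into $x\frac{d}{dx}Y_V^s(v,x)$. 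The $D$-derivative property and $D$-commutator formula are shorter still: differentiating $e^{xD_V}Y_V(v,-x)u$ in $x$ and using $\frac{d}{dx}Y_V(v,-x) = -Y_V(D_Vv,-x) = -[D_V, Y_V(v,-x)]$, the term $D_Ve^{xD_V}Y_V(v,-x)u$ cancels the $-e^{xD_V}D_VY_V(v,-x)u$ coming from the commutator (since $D_V$ commutes with $e^{xD_V}$), leaving $e^{xD_V}Y_V(v,-x)D_Vu = Y_V^s(D_Vu,x)v$; the same cancellation identifies this with $[D_V, Y_V^s(u,x)]v$.

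Next come the analytic axioms. The rationality of a product of two (indeed, of any number of) operators $Y_V^s$ is exactly Proposition \ref{n-prod-opp}, and the rationality of an iterate of two operators $Y_V^s$ is exactly Proposition \ref{2-iter-opp}; each gives both absolute convergence in the expected region and the allowed pole set. It remains to verify associativity, namely that on $|z_1|>|z_2|>|z_1-z_2|>0$ the product $\langle v', Y_V^s(u_1,z_1)Y_V^s(u_2,z_2)v\rangle$ and the iterate $\langle v', Y_V^s(Y_V^s(u_1,z_1-z_2)u_2,z_2)v\rangle$ agree. Writing out $Y_V^s$, the product equals $\langle v', e^{z_1D_V}Y_V(e^{z_2D_V}Y_V(v,-z_2)u_2,-z_1)u_1\rangle$, while, since $e^{z_2D_V}e^{(z_1-z_2)D_V}=e^{z_1D_V}$, the iterate equals $\langle v', e^{z_1D_V}\bigl(e^{(-z_1+z_2)D_V}Y_V(v,-z_2)e^{(z_1-z_2)D_V}Y_V(u_2,-z_1+z_2)u_1\bigr)\rangle$; both therefore carry the same outer factor $e^{z_1D_V}$, which by the adjoint argument of Remark \ref{expD} amounts to replacing $v'$ by the finitely many components of $e^{z_1D_V'}v'$. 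For each such component $\tilde{v}'$ the two remaining cores converge (by the intermediate steps of Propositions \ref{n-prod-opp} and \ref{2-iter-opp}) to the rational functions determined by $\langle \tilde{v}', Y_V(Y_V(v,-z_2)u_2,-z_1+z_2)u_1\rangle$ and $\langle \tilde{v}', Y_V(v,-z_1)Y_V(u_2,-z_1+z_2)u_1\rangle$ respectively. Associativity of $Y_V$, applied with $w_1=-z_1$ and $w_2=-z_1+z_2$ so that $w_1-w_2=-z_2$, shows that these two rational functions agree on the nonempty open set $|z_1|>|z_1-z_2|>|z_2|>0$, hence agree identically. Summing over the components of $e^{z_1D_V'}v'$ yields that the product and the iterate extend to the same rational function, and on the overlap region $|z_1|>|z_2|>|z_1-z_2|>0$ both series converge to it, which is precisely associativity for $Y_V^s$.

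The main obstacle is the associativity step: the algebraic axioms reduce to routine formal identities and rationality is handed to us by Section \ref{Skew-Symm-Section}, but closing the argument requires keeping careful track of the $e^{zD_V}$ prefactors generated by the two groupings — it is the identity $e^{z_2D_V}e^{(z_1-z_2)D_V}=e^{z_1D_V}$ that makes them match — and one must check that the change of variables $w_1=-z_1,\ w_2=-z_1+z_2$ used to invoke associativity of $Y_V$ lands in a region where all the relevant series converge, so that the identity theorem for rational functions legitimately applies.
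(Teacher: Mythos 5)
Your proposal is correct and follows essentially the same route as the paper: the grading, vacuum and $D$-axioms by the same formal computations with $e^{xD_V}$, rationality delegated to Propositions \ref{n-prod-opp} and \ref{2-iter-opp}, and associativity obtained by tracking the $e^{z_1D_V}$ prefactors so that the product and iterate of $Y_V^s$ reduce, respectively, to the iterate $\langle v', Y_V(Y_V(v,-z_2)u_2,-z_1+z_2)u_1\rangle$ and the product $\langle v', Y_V(v,-z_1)Y_V(u_2,-z_1+z_2)u_1\rangle$, which associativity of $Y_V$ identifies as the same rational function. Your explicit remarks on $e^{z_2D_V}e^{(z_1-z_2)D_V}=e^{z_1D_V}$ and on the region $|z_1|>|z_1-z_2|>|z_2|>0$ where the two rational functions are compared are exactly the points the paper leaves implicit in its appeal to the proofs of those two propositions.
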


\begin{proof}
\begin{enumerate}
\item The lower bound condition is trivial. We verify the $\d_V$-commutator formula: for every $u\in V$
$$[\d_V, Y_V^s(u,x)] = x\frac d {dx}Y_V^s(u,x) + Y_V^s(\d_V u, x). $$
First we note that since $D_V$ is of weight 1, for each $n\in \mathbb{N}$, $D_V^n$ is then of weight $n$. Thus
$$[\d_V, D_V^n] = nD_V^n,$$
and one computes easily that
$$[\d_V, e^{xD_V}] = xD_Ve^{xD_V}. $$
With this formula, we compute the commutator as follows
\begin{align*}
[\d_V, Y_V^s(u,x)]v & = \d_V Y_V^s(u, x)v - Y_V^s(u, x)\d_V v\\
&= \d_V e^{xD_V}Y_V(v, -x)u - e^{xD_V}Y_V(\d_V v, -x)u\\
&= e^{xD_V} \d_V Y_V(v, -x)u + xD_Ve^{xD_V}Y_V(v, -x)u - e^{xD_V}Y_V(\d_V v, -x)u \\
&= e^{xD_V} Y_V(v, -x)\d_Vu + e^{xD_V}Y_V(\d_V v, -x)u + e^{xD_V} x\frac d {dx}Y_V(v, -x)u \\
& \qquad + xD_Ve^{xD_V}Y_V(v, -x)u - e^{xD_V}Y_V(\d_V v, -x)u \\
&= e^{xD_V} Y_V(v, -x)\d_Vu  + e^{xD_V} x\frac d {dx}Y_V(v, -x)u  + xD_Ve^{xD_V}Y_V(v, -x)u \\
&= e^{xD_V} Y_V(v, -x)\d_Vu + x\frac d {dx} e^{xD_V}Y_V(v, -x)u\\
&= Y_V^s(\d_V u, x)v + x \frac d {dx} Y_V^s(u, x)v,
\end{align*}
where the fourth equality holds because of the $\d$-commutator formula for $Y_V$; and the sixth equality holds because of the formal chain rule.

\item Since for $v\in V$, 
$$Y_V^s(\one, x)v = e^{xD_V} Y_V(v, -x)\one = e^{xD_V} e^{-xD_V} v = v, $$
the identity property follows. Since for $u\in V$, 
$$Y_V^s(u,x)\one = e^{zD_V}Y_V(\one, -x) u = e^{zD_V} u,$$
the creation property follows. 

\item It follows directly from $Y_V^s(u,x)\one = e^{xD_V}u$ that
$$D_V u = \lim\limits_{x\to 0} \frac d{dx} Y_V^s(u, x)\one. $$
We prove the $D$-derivative property as follows:
$$\begin{aligned}
  Y_V^s({D_V}u,x)v &= {e^{x{D_V}}}{Y_V}(v, - x){D_V}u = {e^{x{D_V}}}{D_V}{Y_V}(v, - x)u + {e^{x{D_V}}}[{D_V},{Y_V}(v, - x)]u \hfill \\
   &= {e^{x{D_V}}}{D_V}{Y_V}(v, - x)u + {e^{x{D_V}}}\frac{d}{{d( - x)}}{Y_V}(v, - x)u \hfill \\
   &= \frac{d}{{dx}}{e^{x{D_V}}}{Y_V}(v, - x)u = \frac{d}{{dx}}Y_V^s(u,x)v. \hfill \\ 
\end{aligned}$$
Then the $D_V$-commutator formula follows from
$$\begin{aligned}[]
[D_V, Y_V^s(u,x)]v &= D_VY_V^s(u,x)v - Y_V^s(u,x)D_Vv \\ 
&= D_V e^{xD_V}Y(v,-x)u - e^{xD_V}Y(D_Vv, -x)u\\
&= {e^{x{D_V}}}{D_V}{Y_V}(v, - x)u + {e^{x{D_V}}}\frac{d}{{d( - x)}}{Y_V}(v, - x)u = Y^o(D_Vu, x)V. \hfill \\\end{aligned}$$

\item This has been done in Proposition \ref{n-prod-opp} and Proposition \ref{2-iter-opp}. 

\item Fix $u_1, u_2, v\in V$ and $v'\in V'$. Let $S_1 = \{(z_1, z_2): |z_1|>|z_2|>0\}$ and $S_2= \{(z_1, z_2): |z_2|>|z_1-z_2|>0\}$. A careful analysis of the proof to Proposition \ref{n-prod-opp} shows that, the series 
$$\langle v', Y_V^s(u_1, z_1)Y_V^s(u_2, z_2)v\rangle$$
converges absolutely in $S_1$ to the same rational function as what 
$$\langle v', e^{z_1D_V}Y_V(Y_V(v, -z_2)u_2, -z_1+z_2)u_1\rangle$$
converges to (when $|z_1-z_2|>|z_2|>0$). Also the proof to Proposition \ref{2-iter-opp} shows that when $|z_2|>|z_1-z_2|>0$, the series 
$$\langle v', Y_V^s(Y_V^s(u_1, z_1-z_2)u_2, z_2)v\rangle$$
converges absolutely in $S_2$ to the same rational function as that
$$\langle v', e^{z_1D_V}Y_V(v, -z_1)Y_V(u_2, -z_1+z_2)u_1\rangle$$
converges to (when $|z_1|>|z_1-z_2|>0$). From the associativity of $Y_V$, we know that these rational functions are identical. In other words, $\langle v', Y_V^s(u_1, z_1)Y_V^s(u_2, z_2)v\rangle$ and $\langle v', Y_V^s(Y_V^s(u_1, z_1-z_2)u_2, z_2)v\rangle$ converges absolutely to the same rational function respectively in the region $S_1$ and $S_2$. So in $S_1\cap S_2$ their sums are equal. 
\end{enumerate}

\end{proof}

The MOSVA $(V, Y_V^s, \one)$ is called the \textit{opposite MOSVA} of $(V, Y_V, \one)$. When there is no confusion, we will use $V^{op}$ to denote it.

\begin{rema}
Given a MOSVA $(V, Y_V, \one)$, from the fact that
$$(Y_V^s)^s(u,x)v = e^{xD_V}Y_V^s(v,-x)u = e^{xD_V}e^{-xD_V}Y_V(u,x)v = Y_V(u,x)v, $$
we have $(V^{op})^{op} = V$. 
\end{rema}

\begin{rema}
For a vertex algebra with a lower-bounded grading, we know that $Y_V = Y_V^s$ because this is precisely the skew-symmetry identity. Conversely, 
if a MOSVA $V$ satisfies $Y_V = Y_V^s$, i.e.  for $v'\in V', u_1, u_2, v\in V$ and any $x\neq 0$,
$$\langle v', Y_V(u, z)v\rangle = \langle v', e^{zD_V}Y_V(v, -z)u\rangle, $$
then $V$ is a vertex algebra with a lower-bounded grading, since associativity and skew-symmetry identity imply the Jacobi identity (see \cite{H-Gen-Rat} Proposition 2.2 and \cite{LL}, Section 3.6). 
\end{rema}


\subsection{Modules for the MOSVA and modules for the opposite MOSVA}

\begin{thm}\label{leftVopModule}
Given a right $V$-module $(W, Y_W^R, \d_W, D_W)$, we define the vertex operator map
$$\begin{aligned}
Y_W^{s(R)}: & V \otimes W \to W\\
& v \otimes w \mapsto e^{xD_W}Y_W^R(w, -x)v. 
\end{aligned}$$
Then $(W, Y_W^{s(R)}, \d_W, D_W)$ is a left $V^{op}$-module. \\
Conversely, given a left $V^{op}$-module $(W, Y_W^{s(R)}, \d_W, D_W)$, we define the vertex operator map
$$\begin{aligned}
Y_W^R: &W\otimes V \to W\\
& w \otimes v \mapsto e^{xD_W}Y_W^{s(R)}(v, -x)w. 
\end{aligned}$$
then $(W, Y_W^R, \d_W, D_W)$ is a right $V$-module. 
\end{thm}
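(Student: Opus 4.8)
The plan is to verify the left $V^{op}$-module axioms of Definition~\ref{DefMOSVA-L} for $(W, Y_W^{s(R)}, \d_W, D_W)$, using the proof of Theorem~\ref{oppMOSVA} as a template for the grading, identity and $D$-axioms, and Propositions~\ref{n-prod-opp} and \ref{2-iter-opp} as templates for rationality and associativity; the converse then follows by involutivity. Throughout one uses $\d_{V^{op}} = \d_V$ and $D_{V^{op}} = D_V$. The lower-bound and $\d$-grading conditions are inherited from $W$ verbatim, since the grading and $\d_W$ are unchanged. The $\d$-commutator formula $[\d_W, Y_W^{s(R)}(v,x)] = Y_W^{s(R)}(\d_V v, x) + x\frac{d}{dx}Y_W^{s(R)}(v,x)$ follows by the manipulation of part~(1) of Theorem~\ref{oppMOSVA}, using $[\d_W, e^{xD_W}] = xD_W e^{xD_W}$ (valid since $D_W$ has weight $1$), the $\d$-commutator formula of $Y_W^R$, and the formal chain rule. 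The identity property is immediate: $Y_W^{s(R)}(\one, x)w = e^{xD_W} Y_W^R(w,-x)\one = e^{xD_W}e^{-xD_W}w = w$ by Proposition~\ref{ImmediateFacts-R}(3). The $D$-derivative and $D$-commutator properties $\frac{d}{dx}Y_W^{s(R)}(v,x) = Y_W^{s(R)}(D_V v, x) = [D_W, Y_W^{s(R)}(v,x)]$ follow by the computation of part~(3) of Theorem~\ref{oppMOSVA}, using that $D_W$ commutes with $e^{xD_W}$, the product rule, and the identities $Y_W^R(w,-x)D_V = D_W Y_W^R(w,-x) + \frac{d}{dx}Y_W^R(w,-x)$ and $Y_W^R(D_W w,-x) = -\frac{d}{dx}Y_W^R(w,-x)$ read off from the right-module $D$-axioms.

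The substantial part is rationality. The key observation is that a product of $Y_W^{s(R)}$-operators unfolds into an iterate of $Y_W^R$-operators dressed with $e^{z_iD_W}$ factors, while an iterate of $Y_W^{s(R)}$'s unfolds into a product of $Y_W^R$-operators with one outer $e^{z_1D_W}$. Indeed, iterating the definition,
\[
Y_W^{s(R)}(v_1,z_1)\cdots Y_W^{s(R)}(v_n,z_n)w = e^{z_1D_W}Y_W^R\!\left(e^{z_2D_W}Y_W^R(\cdots e^{z_nD_W}Y_W^R(w,-z_n)v_n\cdots,-z_{n-1})v_{n-1}\cdots,-z_1\right)v_1 .
\]
I would prove the rationality of $\langle w', Y_W^{s(R)}(v_1,z_1)\cdots Y_W^{s(R)}(v_n,z_n)w\rangle$ on $|z_1|>\cdots>|z_n|>0$ by the three steps of Proposition~\ref{n-prod-opp}: (i) replace each inner $e^{z_iD_W}$ ($i=2,\dots,n$) by $e^{\zeta_iD_W}$ with independent $\zeta_i$, and use the $D$-conjugation property of $Y_W^R$ (Proposition~\ref{ImmediateFacts-R}(4)) together with Lemma~\ref{IterSeries} to identify the result with the $n$-fold right-module iterate $\langle w', Y_W^R(\cdots Y_W^R(Y_W^R(w,-z_n)v_n, -z_{n-1}+\zeta_n)v_{n-1}\cdots, -z_1+\zeta_2)v_1\rangle$, whose absolute convergence to a rational function with poles only along coordinate hyperplanes and diagonals is Theorem~\ref{n-iter-prop-R}, thereby enlarging the domain of convergence to include the slice $\zeta_i=z_i$; (ii) specialize $\zeta_i=z_i$ for $i=2,\dots,n$; (iii) absorb the outermost $e^{z_1D_W}$ via Remark~\ref{expD}, passing to the adjoint $e^{z_1D_W'}$ on $W'$, which acts by a finite sum in each degree as $D_W'$ has weight $-1$. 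Tracking the change of variables, the only poles of the resulting function are at $z_i=0$ and $z_i=z_j$. For the iterate of two operators, the analogous computation with the $D$-conjugation property gives
\[
Y_W^{s(R)}(Y_{V^{op}}(v_1,z_1-z_2)v_2, z_2)w = e^{z_1D_W}Y_W^R(w,-z_1)Y_V(v_2,z_2-z_1)v_1,
\]
that is, $e^{z_1D_W}$ applied to the right-module product of two vertex operators; its rationality on $|z_2|>|z_1-z_2|>0$ with poles only at $z_1=0, z_2=0, z_1=z_2$ follows from the right-module rationality axiom, Lemma~\ref{IterSeries} (to reach that region), and Remark~\ref{expD}, exactly as in Proposition~\ref{2-iter-opp}.

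For associativity, the rationality analysis shows that $\langle w', Y_W^{s(R)}(v_1,z_1)Y_W^{s(R)}(v_2,z_2)w\rangle$ on $\{|z_1|>|z_2|>0\}$ and $\langle w', Y_W^{s(R)}(Y_{V^{op}}(v_1,z_1-z_2)v_2,z_2)w\rangle$ on $\{|z_2|>|z_1-z_2|>0\}$ both converge to $e^{z_1D_W}$ applied to the single rational function determined by the right-module product $\langle w', Y_W^R(w,\zeta_1)Y_V(v_2,\zeta_2)v_1\rangle$ under $\zeta_1=-z_1$, $\zeta_2=z_2-z_1$: for the product side this uses the associativity (and iterate-rationality) of $Y_W^R$ to collapse the inner two-fold iterate $Y_W^R(Y_W^R(w,-z_2)v_2,-z_1+z_2)v_1$ to that product, and for the iterate side it is the second displayed identity. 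Hence the two rational functions agree, and on the overlap $\{|z_1|>|z_2|>|z_1-z_2|>0\}$ the two series have equal sums, which is the associativity axiom. This completes the verification that $(W, Y_W^{s(R)}, \d_W, D_W)$ is a left $V^{op}$-module. The converse is obtained by the symmetric argument: the construction is involutive, since if $Y_W^{s(R)}$ is built from $Y_W^R$ then $e^{xD_W}Y_W^{s(R)}(v,-x)w = e^{xD_W}e^{-xD_W}Y_W^R(w,x)v = Y_W^R(w,x)v$; interchanging the roles of left and right modules and of $V$ and $V^{op}$ (using $(V^{op})^{op}=V$, $(Y_V^s)^s=Y_V$), one verifies in the same manner that $Y_W^R(w,x)v = e^{xD_W}Y_W^{s(R)}(v,-x)w$ satisfies the right-$V$-module axioms, where the creation property $Y_W^R(w,x)\one = e^{xD_W}Y_W^{s(R)}(\one,-x)w = e^{xD_W}w$ takes the place of the identity property.

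The main obstacle is the rationality axiom, and within it two delicate points: justifying the passage from the $e^{z_iD_W}$-dressed expressions to genuine right-module iterates and products when the arguments are already infinite sums in $\arc{W}$ — which is exactly what forces the independent-variable device $\zeta_i$ together with Lemma~\ref{IterSeries}, rather than a naive formal substitution — and pinning down the regions of absolute convergence after the successive analytic continuations while checking that no spurious poles survive. The grading, identity and $D$-axioms are routine transcriptions of the corresponding steps in the proof of Theorem~\ref{oppMOSVA}, and no grading-restriction hypothesis on $W$ is needed, the use of $e^{zD_W}$ being controlled by Remark~\ref{expD}.
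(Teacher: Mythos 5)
Your proposal is correct and follows essentially the same route as the paper: the grading, identity and $D$-axioms are the transcriptions from Theorem \ref{oppMOSVA}, and rationality and associativity are obtained by substituting $Y_W^R$ for $Y_V$ and $Y_W^{s(R)}$ for $Y_V^s$ in Propositions \ref{n-prod-opp} and \ref{2-iter-opp} and in Part (5) of Theorem \ref{oppMOSVA} — i.e., the independent-variable $\zeta_i$ device, Lemma \ref{IterSeries}, Theorem \ref{n-iter-prop-R} for the resulting right-module iterates, and Remark \ref{expD} for the outer $e^{z_1D_W}$. You in fact spell out more of the details (the explicit unfolding of products of $Y_W^{s(R)}$ into iterates of $Y_W^R$ and vice versa, and the involutivity for the converse) than the paper does.
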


\begin{proof}
Let $(W, Y_W^R, \d_W, D_W)$ be a right $V$-module. We verify all the axioms of the left $V^{op}$-module. 
\begin{enumerate}
\item The grading of $W$ obviously satisfy the lower bound condition and the $\d$-grading condition. The proof of the $\d$-commutator formula is similar to that in the proof of Theorem \ref{oppMOSVA}.  
\item The identity property follows from Proposition \ref{ImmediateFacts-R}
$$Y_W^{s(R)}(\one, x)w = e^{xD_W}Y_W^R(w, -x)\one = e^{xD_W}e^{-xD_W}w  = w. $$
\item We first prove the $D$-derivative property
$$\begin{aligned}
\frac{d}{{dx}}Y_W^{s(R)}(v,x)w &= \frac{d}{{dx}}\left( {{e^{x{D_W}}}{Y_W^R}(w, - x)v} \right) = {D_W}{e^{x{D_W}}}{Y_W^R}(w, - x)v + {e^{x{D_W}}}\frac{d}{{dx}}\left( {{Y_W^R}(w, - x)v} \right)\\
&= {e^{x{D_W}}}{D_W}{Y_W^R}(w, - x)v + {e^{x{D_W}}}\frac{d}{{dx}}\left( {{Y_W^R}(w, - x)v} \right) \hfill \\
&= {e^{x{D_W}}}[{D_W},{Y_W^R}(w, - x)]v + {e^{x{D_W}}}{Y_W^R}(w, - x){D_V}v + {e^{x{D_W}}}\frac{d}{{dx}}\left( {{Y_W^R}(w, - x)v} \right) \hfill \\
&= {e^{x{D_W}}}\frac{d}{{d( - x)}}{Y_W^R}(w, - x)v + {e^{x{D_W}}}{Y_W^R}(w, - x){D_V}v + {e^{x{D_W}}}\frac{d}{{dx}}\left( {{Y_W^R}(w, - x)v} \right) \hfill \\
&= {e^{x{D_W}}}{Y_W^R}(w, - x){D_V}v = Y_W^{s(R)}({D_V}v,x)w.  
\end{aligned}$$
The $D$-commutator formula follows: 
$$\begin{aligned}[]
  [{D_W},Y_W^{s(R)}(v,x)]w &= {D_W}{e^{x{D_W}}}{Y_W^R}(w, - x)v - {e^{x{D_W}}}{Y_W^R}({D_W}w, - x)v \hfill \\
   &= {e^{x{D_W}}}{D_W}{Y_W^R}(w, - x)v + {e^{x{D_W}}}\frac{d}{{dx}}\left( {{Y_W^R}(w, - x)v} \right)\\ 
   &= \frac{d}{{dx}}\left( {{e^{x{D_W}}}{Y_W^R}(w, - x)v}\right)= \frac{d}{{dx}}Y_W^{s(R)}(v,x)w.  \hfill
\end{aligned}
$$
\item It suffices to replace $Y_V$ by $Y_W^R$ and $Y_V^s$ by $Y_W^{s(R)}$ in the arguments of Proposition \ref{n-prod-opp} and Proposition \ref{2-iter-opp}.  

\item It suffices to replace $Y_V$ by $Y_W^R$ and $Y_V^s$ by $Y_W^{s(R)}$ in the arguments of Part (5) of Theorem \ref{oppMOSVA}. 

\end{enumerate}
The converse can be proved similarly. We omit the details here.
\end{proof}

Similarly, one can prove the following theorem:

\begin{thm}
Given a left $V$-module $(W, Y_W^L, \d_W, D_W)$, we define the vertex operator map
$$\begin{aligned}
Y_W^{s(L)}: & W \otimes V \to W\\
& w \otimes v \mapsto e^{xD_W}Y_W^L(v, -x)w.
\end{aligned}$$
Then $(W, Y_W^{s(L)}, \d_W, D_W)$ is a left $V^{op}$-module. \\
Conversely, given a right $V^{op}$-module $(W, Y_W^{s(L)}, \d_W, D_W)$, we define the vertex operator map
$$\begin{aligned}
Y_W^L: &V\otimes W \to W\\
& v \otimes w \mapsto e^{xD_W}Y_W^{s(L)}(w, -x)v.
\end{aligned}$$
Then $(W, Y_W^L, \d_W, D_W)$ is a left $V$-module. 
\end{thm}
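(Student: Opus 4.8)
The statement is the mirror image of Theorem \ref{leftVopModule}, with the roles of left and right interchanged, so I would prove it by running exactly the same argument with the obvious dictionary $Y_W^R \leftrightarrow Y_W^L$, $Y_W^{s(R)} \leftrightarrow Y_W^{s(L)}$, and ``left $V^{op}$-module'' replaced by ``right $V^{op}$-module.'' Concretely, starting from a left $V$-module $(W, Y_W^L, \d_W, D_W)$, I would verify the five axioms of a right $V^{op}$-module for $(W, Y_W^{s(L)}, \d_W, D_W)$: the grading axioms, the creation property, the $D$-derivative and $D$-commutator properties, the rationality axioms, and associativity. The only subtlety is that $Y_W^{s(L)}$ takes a module element in the first slot and an algebra element in the second, and the vertex operator on the algebra side that appears in the rationality and associativity axioms for a right module over $V^{op}$ is $Y_V^s = Y_{V^{op}}$, not $Y_V$; so one must keep track of which copy of $V$'s vertex operator is being used.

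First I would dispose of the easy axioms. The grading axioms are immediate since $\d_W$ and $D_W$ are unchanged; the $\d$-commutator formula for $Y_W^{s(L)}$ follows from the $\d$-commutator formula for $Y_W^L$ together with the identity $[\d_W, e^{xD_W}] = xD_W e^{xD_W}$ (and the corresponding identity on $V$), exactly as in Part (1) of Theorem \ref{oppMOSVA}. The creation property $Y_W^{s(L)}(w, x)\one = e^{xD_W}Y_W^L(\one, -x)w = e^{xD_W}w = w + O(x)$ uses only the identity property of $Y_W^L$. The $D$-derivative and $D$-commutator properties are obtained by the same formal manipulation as in the proof of Theorem \ref{leftVopModule}, pushing $e^{xD_W}$ past $D_W$, using $\frac{d}{dx}Y_W^L(v,-x)w = Y_W^L(D_Vv,-x)w = -\frac{d}{d(-x)}\,(\cdots)$ and the $D$-commutator formula on $W$; I would simply transcribe that computation.

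Second, the rationality and associativity axioms. For rationality of products I would observe that
$$\langle w', Y_W^{s(L)}(w, z_1) Y_V(u_1, z_2)\cdots Y_V(u_{n-1}, z_n)u_n\rangle$$
wait --- here the right-module-over-$V^{op}$ rationality axiom requires $Y_{V^{op}} = Y_V^s$ in the slots after the first, so the series in question is
$$\langle w', Y_W^{s(L)}(w, z_1) Y_V^s(u_1, z_2)\cdots Y_V^s(u_{n-1}, z_n)u_n\rangle
= \langle w', e^{z_1 D_W}Y_W^L(e^{z_2 D_V}Y_V(\cdots)\cdots, -z_1)w\rangle,$$
an iterate of left-module and $V$-vertex operators with $e^{zD}$ factors inserted. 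This is precisely the kind of object handled in Theorem \ref{n-iter-prop} and in the three-step arguments of Propositions \ref{n-prod-opp} and \ref{2-iter-opp}: one first proves convergence of the iterate with formal ``spectator'' variables $\zeta_i$ in place of the $z_i$ inside the $e^{\zeta_i D}$ factors using Lemma \ref{IterSeries}, then evaluates $\zeta_i = z_i$ since that point lies in the region of convergence, then invokes Remark \ref{expD} to absorb the $e^{z_1 D_W}$ factor (noting $D_W$'s adjoint is locally nilpotent on $W'$). Associativity then follows by the same comparison as in Part (5) of Theorem \ref{oppMOSVA}: both the product series and the iterate series for $Y_W^{s(L)}$ converge, via the above, to rational functions that the associativity of $Y_W^L$ identifies on the overlap of the respective domains. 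For the converse direction one uses $(Y_W^{s(L)})^{s} = Y_W^L$ (the same cancellation $e^{xD_W}e^{-xD_W} = 1$ as in the Remark following Theorem \ref{oppMOSVA}) and the fact that $(V^{op})^{op} = V$, so no new work is needed.

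**Main obstacle.** The genuine content, as in Section 5, is entirely in the rationality axioms: tracking the regions of convergence through the $\zeta_i \to z_i$ specialization and correctly applying Lemma \ref{IterSeries} to the iterated series. Everything else is bookkeeping. Since this is the third instance of the same three-step template (algebra case in Propositions \ref{n-prod-opp}--\ref{2-iter-opp}, first module case in Theorem \ref{leftVopModule}), I expect the write-up to say only that the arguments are obtained from those by replacing $Y_W^R$ with $Y_W^L$ (and adjusting which slot carries module versus algebra elements), and to omit the repeated details.
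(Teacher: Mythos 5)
Your proposal is correct and is exactly what the paper intends: the paper offers no separate proof of this theorem, saying only ``Similarly, one can prove,'' and your mirror-image transcription of the argument of Theorem \ref{leftVopModule} (easy axioms by the $e^{xD_W}$ manipulations of Theorem \ref{oppMOSVA}, rationality via the three-step spectator-variable argument of Propositions \ref{n-prod-opp} and \ref{2-iter-opp} with Lemma \ref{IterSeries} and Remark \ref{expD}, associativity by matching the two rational functions through the associativity of $Y_W^L$) is precisely the omitted content. You are also right to read the conclusion as ``right $V^{op}$-module'' despite the statement's wording, since the signature $Y_W^{s(L)}\colon W\otimes V\to W$ and the converse clause make that the only consistent interpretation.
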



\section{M\"obius Structure and Contragredient Modules}

In this section we define M\"obius structure on MOSVAs and the left (right, bi-) modules for such MOSVAs. With this structure, we prove that the graded dual of a grading-restricted  left module for a MOSVA forms a M\"obius right module for the MOSVA. For M\"obius left modules that are not grading-restricted, we prove the same result under a pole-order condition stronger than that in Definition \ref{NSHTCL}. The results in this section generalize the theory of contragredient modules for M\"obius vertex algebras developed in \cite{FHL} and \cite{HLZ1}.

\subsection{Basic definitions}

\begin{defn}
A \textit{M\"obius MOSVA} is a MOSVA $(V, Y_V, \one)$ with a representation $\rho_V$ of the Lie algebra $\mathfrak{sl}(2)$ on $V$, given by 
$$L_V(0) = \rho_V(L_0) = \d_V, L_V(-1) = \rho_V(L_1) = D_V, L_V(1) = \rho_V(L_1)$$
where $\{L_{-1}, L_0, L_1\}$ is a basis of $\mathfrak{sl}(2)$ with Lie commutators
$$[L_0, L_{-1}]=L_{-1}, [L_0, L_1]=-L_1, \text{ and }[L_{-1}, L_1] = -2L_0,$$
and the following conditions hold for every $u\in V$:
$$[L_V(1), Y_V(u, x)]=Y(L_V(1)u,x) + 2xY(L_V(0)u, x)+x^2Y(L_V(-1)u,x)$$
We will use the notation $(V, Y_V, \one, \rho_V)$ to denote a M\"obius MOSVA. When there is no confusion, we will simply use the notation $V$. 
\end{defn}

\begin{rema}
Since $\d_V = L_V(0)$ and $[L_V(0), L_V(1)] = -L_V(1)$, we know that $L_V(1)$ is actually a linear operator of weight $-1$. Since the grading on $V$ is lower-bounded, the operator is actually locally nilpotent, i.e., for every $v\in V$, there exists $m\in \Z_+$ such that $L_V(1)^m v = 0$. Moreover, with the identity property and creation property, we can see that 
$$L_V(j) \one = 0, j=0, \pm 1$$
\end{rema}

\begin{prop}\label{Opp-Mob-MOSVA}
Let $(V, Y_V, \one, \rho_V)$ be a M\"obius MOSVA. Then the opposite MOSVA $(V, Y_V^s, \one, \rho_V)$ is also a M\"obius MOSVA.
\end{prop}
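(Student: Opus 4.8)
The plan is to verify the single defining identity for a M\"obius structure on $(V, Y_V^s, \one, \rho_V)$, namely that for every $u\in V$,
\begin{equation*}
[L_V(1), Y_V^s(u,x)] = Y_V^s(L_V(1)u, x) + 2x\, Y_V^s(L_V(0)u, x) + x^2\, Y_V^s(L_V(-1)u, x),
\end{equation*}
since all the other data ($\rho_V$, hence the $\mathfrak{sl}(2)$-relations and the identities $L_V(j)\one = 0$) are unchanged and $(V, Y_V^s, \one)$ is already known to be a MOSVA by Theorem \ref{oppMOSVA}. The strategy is purely computational: expand $Y_V^s(u,x)v = e^{xD_V}Y_V(v,-x)u$ and push the operator $L_V(1)$ through $e^{xD_V}$ and $Y_V(v,-x)$, using the known $L_V(1)$-commutator for $Y_V$ applied with the roles of the two arguments exchanged.

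First I would record the two auxiliary commutator facts I need. One is the commutation of $L_V(1)$ with $e^{xD_V} = e^{xL_V(-1)}$: from the $\mathfrak{sl}(2)$-relations $[L_V(1),L_V(-1)] = 2L_V(0)$ and $[L_V(0),L_V(-1)]=L_V(-1)$, a standard exponential-conjugation computation gives
\begin{equation*}
e^{-xL_V(-1)} L_V(1)\, e^{xL_V(-1)} = L_V(1) + 2x L_V(0) + x^2 L_V(-1),
\end{equation*}
equivalently $L_V(1)e^{xD_V} = e^{xD_V}\bigl(L_V(1) + 2x L_V(0) + x^2 L_V(-1)\bigr)$. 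The other is the $L_V(1)$-commutator for $Y_V$ itself, which I will apply to the vertex operator $Y_V(v,y)$ (with $y$ a placeholder to be set to $-x$ at the end): $[L_V(1), Y_V(v,y)] = Y_V(L_V(1)v,y) + 2y Y_V(L_V(0)v,y) + y^2 Y_V(L_V(-1)v,y)$.

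Then the main computation is: write $L_V(1)Y_V^s(u,x)v = L_V(1)e^{xD_V}Y_V(v,-x)u$, move $L_V(1)$ past $e^{xD_V}$ using the first fact, then move the resulting $L_V(1) + 2xL_V(0) + x^2 L_V(-1)$ past $Y_V(v,-x)$ using the second fact with $y=-x$, and finally collect the terms. One checks that the terms involving $L_V(1)$ acting on $u$ reassemble into $e^{xD_V}Y_V(v,-x)L_V(1)u = Y_V^s(L_V(1)u,x)v$ plus the left-over $[L_V(1),Y_V^s(u,x)]v$ on the left side; the terms where $L_V(1)$, $L_V(0)$, $L_V(-1)$ hit $v$ or produce explicit powers of $x$ should, after using $Y_V^s(L_V(j)u,x)v = e^{xD_V}Y_V(v,-x)L_V(j)u$ and comparing with $e^{xD_V}Y_V(L_V(j)v,-x)u$, reorganize into $2x Y_V^s(L_V(0)u,x)v + x^2 Y_V^s(L_V(-1)u,x)v$. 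The sign juggling from $y=-x$ (so $2y = -2x$, $y^2 = x^2$) combined with the $2xL_V(0) + x^2 L_V(-1)$ coming from the $e^{xD_V}$ conjugation is where the cancellations happen, and I expect the bookkeeping of these signs and the correct pairing of ``$L_V(j)$ acting on $v$'' terms with ``$L_V(j)$ acting on $u$'' terms to be the only real subtlety — there is no analytic or convergence issue here since everything is an identity of formal series, so the main obstacle is simply doing the exponential-conjugation identity carefully and not losing a sign.
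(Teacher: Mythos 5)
Your proposal is correct and follows essentially the same route as the paper: the paper's proof also reduces to verifying only the $L_V(1)$-commutator formula, uses the identity $L_V(1)e^{xL_V(-1)} = e^{xL_V(-1)}\bigl(L_V(1) + 2xL_V(0) + x^2L_V(-1)\bigr)$ (proved via $L_V(1)L_V(-1)^n = L_V(-1)^nL_V(1) + 2nL_V(-1)^{n-1}L_V(0) + n(n-1)L_V(-1)^{n-1}$), and then pushes $L_V(j)$, $j=0,\pm 1$, past $Y_V(v,-x)$ with the three commutator formulas so that the cross terms cancel. The sign bookkeeping you flag does work out exactly as you predict.
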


\begin{proof}
It suffices to check the commutator formula
$$[L_V(1),Y_V^s(u,x)] = Y_V^s(L_V(1)u,x) + 2xY_V^s(L_V(0)u,x) + {x^2}Y_V^s(L_V(-1)u,x).$$
We first compute the left-hand-side:
\begin{align*}
[L_V(1),Y_V^s(u,x)]v & = L_V(1)Y_V^s(u,x)v - Y_V^s(u,x)L_V(1)v\\
& = L_V(1){e^{xL_V(-1)}}Y_V(v, - x)u - {e^{xL_V(-1)}}Y_V(L_V(1)v, - x)u
\end{align*}
In order to interchange $L_V(1)$ and $e^{xL_V(-1)}$ that appear in the first term, we note that for every $n\in \N$,
$$L_V(1)L_V(-1)^n = L_V(-1)^nL_V(1) + L_V(-1)^{n-1}2nL_V(0) +  n(n-1) L_V(-1)^{n-1}, $$
which can be easily proved by induction. Then a straightforward computation shows that
$$L_V(1){e^{xL_V(-1)}} =  e^{xL_V(-1)}L_V(1) + 2xe^{xL_V(-1)}L_V(0) + x^2e^{xL_V( - 1)}L_V(-1). $$
So the left-hand-side is 
\begin{align*}
& e^{xL_V(-1)}L_V(1)Y_V(v,-x)u + 2xe^{xL_V(-1)}L_V(0)Y_V(v,-x)u \\
& \qquad + {x^2}e^{xL_V(-1)}L_V(-1)Y_V(v,-x)u - e^{xL_V(-1)}Y(L_V(1)v, - x)u
\end{align*}
Then we use the commutator relation between $L_V(j), j=0, \pm 1$ and $Y_V(v, -x)$ to deal with the first three terms. The first term is equal to 
\begin{align*}
& e^{xL_V(-1)}Y_V(v, -x)L_V(1)u + e^{xL_V(-1)}Y_V(L_V(1)v, - x)u \\ 
& \qquad - 2xe^{xL_V(-1)}Y_V(L_V(0)v, -x)u + {x^2}e^{xL_V(-1)}Y_V(L_V(-1)v, -x)u
\end{align*}
The second term is equal to 
$$2xe^{xL_V(-1)}Y_V(v, -x)L_V(0)u + 2xe^{xL_V(-1)}Y_V(L_V(0)v, -x)u - 2{x^2}e^{xL_V(-1)}Y_V(L_V(-1)v, -x)u$$
The third term is equal to
$$x^2{e^{xL_V(-1)}}Y_V(v, -x)L_V(-1)u + x^2{e^{xL_V(-1)}}Y_V(L_V(-1)v, -x)u$$
The summation of the above three formulas, together with the fourth term, would then simplify to the right-hand-side. 
\end{proof}

\begin{defn}
Let $(V, Y_V, \one, \rho_V)$ be a M\"obius MOSVA. A \textit{M\"obius left $V$-module $W$} is a left $V$-module $(W, Y_W^L, \d_W, D_W)$ with a representation $\rho_W$ of the Lie algebra $\mathfrak{sl}(2)$ on $W$, such that 
$$L_W(0) = \rho_W(L_0), L_W(-1)= \rho_W(L_{-1}) = D_W, L_W(1) = \rho_W(L_1),$$
and for every $u\in V$, 
$$[L_W(0), Y_W^L(u,x)] = Y_W^L(L_V(0)u, x) + x Y_W^L(L_V(-1)u, x)$$
$$[L_W(1), Y_W^L(u,x)] = Y_W^L(L_V(1)u, x) + 2x Y_W^L(L_V(0)u, x) + x^2 Y_W^L(L_V(-1)u, x), $$
and for every $n\in \C, w\in W_{[n]}$, there exists $m\in \mathbb{N}$ such that $(L_W(0)-n)^m w=0$. 
\end{defn}

We will use the notation $(W, Y_W^L, \rho_W)$ to denote M\"obius left $V$-modules. The operator $\d_W$ can be defined as the semisimple part of $L_W(0)$, and the operator $D_W$ is just $L_W(-1)$. So the representation $\rho_W$ has all the information of these two operators and thus we don't need to include them in the notation. When there is no confusion, we will simply use $W$. 

\begin{rema}\label{d-operator}
In \cite{HLZ1}, modules in which $L_W(0)$ is not semisimple are called generalized modules. In the MOSVA setting, we don't use this terminology because we are not requiring the operator $\d_W$ to coincide with $L_W(0)$. Indeed, given $L_W(0)$ satisfying the commutator formulas, one can define $\d_W$ as the semisimple of $L_W(0)$. By similar arguments as those in \cite{HLZ1}, we have 
$$[\d_W, (Y_W^L)_n(v)] = [L_W(0), (Y_W^L)_n(v)] \text{ for all }v \in V \text{ and } n \in Z; $$
$$[\d_W, L_W(j)] = [L_W(0), L_W(j)] \text{ for }j = 0, \pm 1.$$
Thus a M\"obius left $V$-module is still a left $V$-module and should not be entitled with the word ``generalized''. 
\end{rema} 

\begin{rema}
In accordance with convention, when we discuss MOSVA and modules with M\"obius structure, we will refer $\d$-commutator formula as $L(0)$-commutator formula, $D$-derivative property and $D$-commutator formula as $L(-1)$-commutator formula. 
\end{rema}

\begin{defn}
Let $(V, Y_V, \one, \rho_V)$ be a M\"obius MOSVA. A \textit{M\"obius right $V$-module $W$} is a right $V$-module $(W, Y_W^R, \d_W, D_W)$ with a representation $\rho_W$ of the Lie algebra $\mathfrak{sl}(2)$ on $W$, such that 
$$L_W(0) = \rho_W(L_0), L_W(-1)= \rho_W(L_{-1}) = D_W, L_W(1) = \rho_W(L_1),$$
and for every $w\in W$, 
$$[L_W(0), Y_W^R(u,x)] = Y_W^L(L_W(0)w, x) + x Y_W^L(L_W(-1)w, x)$$
$$L_W(1)Y_W^R(w,x)-Y_W^R(w,x)L_V(1) = Y_W^L(L_W(1)w, x) + 2x Y_W^L(L_W(0)w, x) + x^2 Y_W^L(L_W(-1)w, x), $$
and for every $n\in \C, w\in W_{[n]}$, there exists $m\in \mathbb{N}$ such that $(L_W(0)-n)^m w=0$
\end{defn}

\begin{rema}\label{leftMobVopModule}
With similar arguments as Proposition \ref{Opp-Mob-MOSVA}, one can prove the M\"obius version of Theorem \ref{leftVopModule}. In particular, $(W, Y_W^R, \rho_W)$ is a M\"obius right $V$-module if and only if $(W, Y_W^{s(R)}, \rho_W)$ is a M\"obius left $V^{op}$-module, where $Y_W^{s(R)}$ and $Y_W^R$ are skew-symmetry opposite vertex operators to each other. This will be used in the proof of Theorem \ref{W'Module} and \ref{W'Module-1}. 
\end{rema}

\begin{defn}
Let $(V, Y_V, \one, \rho_V)$ be a M\"obius MOSVA. A \textit{M\"obius $V$-bimodule} $W$ is a $V$-bimodule $(W, Y_W^L, Y_W^R, \d_W, D_W)$ with a representation $\rho_W$ of the Lie algebra $\mathfrak{sl}(2)$ on $W$, such that $(W, Y_W^L, \rho_W)$ forms a M\"obius left $V$-module, and $(W, Y_W^R, \rho_W)$ forms a M\"obius right $V$-module. 
\end{defn}

\subsection{The opposite vertex operator}

\begin{defn}
Let $(V, Y_V, \one, \rho_V)$ be a M\"obius MOSVA and $(W, Y_W^L, \rho_W)$ be a M\"obius left $V$-module. We define the \textit{opposite vertex operator} on $W$ associated to $u\in V$ by
$$Y_W^o(u, x) = Y_W^L(e^{xL(1)} (-x^{-2})^{L(0)}u, x^{-1}).$$
For homogeneous $u\in V$, we have
\begin{align*}
Y_W^o(u, x) &= \sum_{n\in \Z} (Y_W^o)_n(u) x^{-n-1}\\
&= \sum_{n\in \Z} \left((-1)^{\text{wt }u} \sum_{m=0}^\infty \frac 1 {m!} (Y_W^L)_{-n-m-2+2\text{wt }u}(L(1)^m v) \right)x^{-n-1}
\end{align*}
Note that since $L(1)$ is locally nilpotent, the summation about variable $m$ is actually finite. Thus each component $(Y_W^o)_n(u)$ is well-defined. Also, the order of summation can be switched at our convenience.
\end{defn}

\begin{rema}
The opposite vertex operator we are defining here should not be confused with the skew-symmetry opposite vertex operator we introduced in the Section \ref{Skew-Symm-Section}. In case $V$ is a vertex algebra, $W$ is a $V$-module, the skew-symmetry opposite vertex operator $Y_W^{s(L)}$ coincides with the original vertex operator $Y_W^L$, while the opposite vertex operator $Y_W^o$ is different. See \cite{FHL} and \cite{HLZ1} for details.  
\end{rema}

\begin{prop}\label{YWO-rat}
For every $u_1, ..., u_n\in V, w\in W, w'\in W'$, the series
$$\langle w', Y_W^o(u_n, z_n)\cdots Y_W^o(u_1, z_1)w\rangle$$
converges absolutely when $|z_1|>\cdots > |z_n|>0$ to a rational function with the only possible poles at
$z_i=0, i=1,..., n$ and $z_i = z_j, 1\leq i < j \leq n$. 
\end{prop}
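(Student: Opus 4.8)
The plan is to deduce everything from the rationality of products of $n$ left vertex operators, which is part of the definition of a left $V$-module (Definition~\ref{DefMOSVA-L}), together with the substitution $x\mapsto x^{-1}$ that is built into $Y_W^o$. This is the MOSVA analogue of the computation of the contragredient vertex operator in \cite{FHL}.

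First I would use the explicit component expansion
$$Y_W^o(u, x) = \sum_{n\in\Z}\left((-1)^{\wt u}\sum_{m\geq 0}\frac{1}{m!}(Y_W^L)_{-n-m-2+2\wt u}(L(1)^m u)\right)x^{-n-1}$$
for homogeneous $u\in V$. Since $L(1)$ is locally nilpotent on $V$, for each $u_i$ only finitely many $m$ contribute, and each $L(1)^m u_i$ is homogeneous. Hence, writing $\langle w', Y_W^o(u_n, z_n)\cdots Y_W^o(u_1, z_1)w\rangle$ as a multiple series in $z_1,\dots,z_n$ and regrouping, it becomes a \emph{finite} sum of series of the shape
$$z_n^{a_n}\cdots z_1^{a_1}\,\bigl\langle w',\, Y_W^L(v_n, z_n^{-1})\,Y_W^L(v_{n-1}, z_{n-1}^{-1})\cdots Y_W^L(v_1, z_1^{-1})\,w\bigr\rangle,$$
where each $v_i$ is one of the homogeneous vectors $L(1)^{m}u_i\in V$ and $a_i\in\Z$ depends only on $\wt u_i$ and $m$. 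Because the regrouping is finite, it introduces no convergence issue.

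Next I would invoke the module rationality axiom for $n$ left vertex operators: for $v_1,\dots,v_n\in V$, the series $\langle w', Y_W^L(v_n,\zeta_n)\cdots Y_W^L(v_1,\zeta_1)w\rangle$ converges absolutely for $|\zeta_n|>\cdots>|\zeta_1|>0$ to a rational function in $\zeta_1,\dots,\zeta_n$ with poles only at $\zeta_i=0$ and $\zeta_i=\zeta_j$. Putting $\zeta_i=z_i^{-1}$, the hypothesis $|z_1|>\cdots>|z_n|>0$ is exactly $|z_n^{-1}|>\cdots>|z_1^{-1}|>0$, so every inner series converges absolutely on the stated domain; multiplying by the monomials $z_n^{a_n}\cdots z_1^{a_1}$ and summing the finitely many terms preserves absolute convergence. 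To identify the limit, substitute $\zeta_i=1/z_i$ into $P(\zeta)\big/\bigl(\prod_i\zeta_i^{p_i}\prod_{i<j}(\zeta_i-\zeta_j)^{p_{ij}}\bigr)$ and use $\zeta_i-\zeta_j=(z_j-z_i)/(z_iz_j)$: this turns the expression into a rational function in $z_1,\dots,z_n$ whose only finite poles are at $z_i=0$ and $z_i=z_j$, the factors $\zeta_i^{p_i}$ and the cofactors $z_iz_j$ contributing only powers of $z_i$; multiplying by $z_i^{a_i}$ changes at most the order of the pole at $z_i=0$, which is permitted. A finite sum of such functions is again of this form, giving the claim.

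The only real work is the bookkeeping: making the regrouping of the multiple series precise, verifying that $\zeta_i=z_i^{-1}$ matches the two convergence regions exactly, and tracking pole orders through $\zeta_i\mapsto 1/z_i$; there is no analytic difficulty beyond what is already packaged in the left-module rationality axiom. If uniform bounds on the pole orders are wanted, they can be read off from Remark~\ref{numdegree} applied to $Y_W^L$, but the statement as given does not need them.
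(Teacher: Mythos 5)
Your proposal is correct and follows essentially the same route as the paper's own proof: expand $Y_W^o$ into a finite sum (via local nilpotence of $L(1)$) of terms of the form $z_n^{a_n}\cdots z_1^{a_1}\langle w', Y_W^L(v_n, z_n^{-1})\cdots Y_W^L(v_1, z_1^{-1})w\rangle$, apply the $n$-operator rationality axiom for $Y_W^L$ in the variables $\zeta_i=z_i^{-1}$, and track the pole structure through the substitution $\zeta_i-\zeta_j=(z_j-z_i)/(z_iz_j)$. The paper does exactly this, pulling out the explicit factors $(-1)^{\wt u_i}z_i^{-2\wt u_i}$ where you write $z_i^{a_i}$; no further comparison is needed.
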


\begin{proof}
It suffices to consider the case when $u_1, ..., u_n\in V$ are homogeneous. In this case,
\begin{align*}
& \langle w', Y_W^o(u_n, z_n)\cdots Y_W^o(u_1, z_1)w\rangle \\ 
= & \sum_{m_1, ..., m_n \text{ finite}} (-1)^{\text{wt }u_1 + \cdots + \text{wt }u_n} z_1^{-2\text{wt }u_1}\cdots z_n^{-2 \text{wt }u_n}\langle w', Y_W^L(L(1)^{m_n}u)_n, z_n^{-1})w\cdots Y_W^L(L(1)^{m_1}u_1, z_1^{-1})\rangle. 
\end{align*}
By the rationality of $Y_W^L$, for fixed $m_1, ..., m_n$, $\langle w', Y_W^L(L(1)^{m_n}u)_n, z_n^{-1})w\cdots Y_W^L(L(1)^{m_1}u_1, z_1^{-1})\rangle$ converges absolutely when $|z_n^{-1}|>\cdots > |z_1|^{-1}> 0$ to a rational function of the form
$$\frac{f(z_1^{-1}, ..., z_n^{-1})}{\prod\limits_{i=1}^n z_i^{-p_i}\prod\limits_{1\leq i < j \leq n} (z_i^{-1}-z_j^{-1})^{p_{ij}}}= \frac{f(z_1^{-1}, ..., z_n^{-1})\prod\limits_{i=1}^n z_i^{p_i+\sum\limits_{j=i+1}^n p_{ij}}}{\prod\limits_{1\leq i < j \leq n} (z_j-z_i)^{p_{ij}}}$$
As the polyonomial $f(z_1^{-1}, ..., z_n^{-n})$ provides negative powers of $z_i, i=1,...,n$, this fraction is a rational function with possible poles at $z_i=0, i=1, ..., n$ and $z_i = z_j, 1\leq i < j \leq n$. Then $\langle w', Y_W^o(u_n, z_n)\cdots Y_W^o(u_1, z_1)w\rangle$, as a finite sum of absolutely convergent series, also converges absolutely when $|z_1|>\cdots > |z_n|>0$ to a rational function with the only possible poles at $z_i=0, i = 1,...,n$ and $z_i = z_j, 1\leq i< j \leq n$. 
\end{proof}

\begin{prop}\label{YWO-ass}
For every $u_1, u_2\in V, w\in W, w'\in W'$. the series
$$\langle w', Y_W^o(Y_V(u_2, z_2-z_1)u_1, z_1)w\rangle$$
converges absolutely when $|z_1|>|z_2-z_1|>0$ to a rational function with the only possible poles at $z_1=0, z_2=0, z_1=z_2$. Moreover,  
$$\langle w', Y_W^o(u_2, z_2)Y_W^o(u_1, z_1)w\rangle = \langle w', Y_W^o(Y_V(u_2, z_2-z_1)u_1, z_1)w\rangle$$
when $|z_1|>|z_2|>|z_1-z_2|>0$
\end{prop}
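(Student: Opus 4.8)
The plan is to reduce the assertion to the associativity of $Y_W^L$ together with the $\mathfrak{sl}(2)$-covariance of $Y_V$, in the spirit of the contragredient-module constructions of \cite{FHL} and \cite{HLZ1}. Write $\mathcal{A}(z)=e^{zL_V(1)}(-z^{-2})^{L_V(0)}$, so that by definition $Y_W^o(u,z)=Y_W^L(\mathcal{A}(z)u,z^{-1})$. Since $L_V(1)$ is locally nilpotent on $V$ and $(-z^{-2})^{L_V(0)}$ only rescales homogeneous components, for homogeneous $u$ the vector $\mathcal{A}(z)u\in V$ is a \emph{finite} sum of homogeneous vectors whose coefficients are Laurent monomials in $z$; hence it is enough to treat homogeneous $u_1,u_2$, and every instance of $Y_W^L$ below involves only genuine vectors of $V$, to which the module axioms apply directly.

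The key input is the following covariance identity for $Y_V$: for homogeneous $u_1,u_2\in V$,
$$Y_V\big(\mathcal{A}(z_2)u_2,\ z_2^{-1}-z_1^{-1}\big)\,\mathcal{A}(z_1)u_1=\mathcal{A}(z_1)\,Y_V(u_2,\ z_2-z_1)u_1,$$
an equality of functions holomorphic on the region $|z_1|>|z_2|>|z_1-z_2|>0$ (where both sides, as iterated series, converge). To prove it I would conjugate the left-hand side by $\mathcal{A}(z_1)$: since $\mathcal{A}(z_1)=e^{z_1L_V(1)}(-z_1^{-2})^{L_V(0)}$ lies in the Borel subgroup — unlike the transformation $z\mapsto 1/z$ itself, which is not in the big Bruhat cell, so the naive conjugation by that transformation is unavailable — one may compose the conjugation formula for $e^{aL_V(1)}$ (obtained by exponentiating the $L_V(1)$-commutator formula of the M\"obius-MOSVA axiom) with the $\d_V$-conjugation property (Part (5) of Proposition \ref{ImmediateFacts}); this rewrites $\mathcal{A}(z_1)^{-1}Y_V(\mathcal{A}(z_2)u_2,z_2^{-1}-z_1^{-1})\mathcal{A}(z_1)$ as $Y_V(\Phi(z_1,z_2)u_2,\,z_2-z_1)$ for an explicit operator $\Phi(z_1,z_2)$ built from $L_V(0),L_V(1)$, and a direct $\mathfrak{sl}(2)$ group computation — using that $L_V(0)$ has integer eigenvalues, so $-I$ acts trivially — shows $\Phi(z_1,z_2)=\mathrm{id}$. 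Keeping track of the nested domains on which each of these expansions is legitimate is the most delicate point of this step.

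Granting the covariance identity, the computation is short. Fix $w'\in W'$, $w\in W$ and $z_1,z_2$ in the nonempty open region $|z_1|>|z_2|>|z_1-z_2|>0$; then $|z_2^{-1}|>|z_1^{-1}|>|z_2^{-1}-z_1^{-1}|>0$, so the associativity axiom for $Y_W^L$ applies and gives
\begin{align*}
\langle w',Y_W^o(u_2,z_2)Y_W^o(u_1,z_1)w\rangle
&=\langle w',Y_W^L(\mathcal{A}(z_2)u_2,z_2^{-1})\,Y_W^L(\mathcal{A}(z_1)u_1,z_1^{-1})w\rangle\\
&=\langle w',Y_W^L\big(Y_V(\mathcal{A}(z_2)u_2,z_2^{-1}-z_1^{-1})\mathcal{A}(z_1)u_1,\ z_1^{-1}\big)w\rangle\\
&=\langle w',Y_W^L\big(\mathcal{A}(z_1)Y_V(u_2,z_2-z_1)u_1,\ z_1^{-1}\big)w\rangle=\langle w',Y_W^o(Y_V(u_2,z_2-z_1)u_1,z_1)w\rangle,
\end{align*}
the third equality being the covariance identity and the last one the definition of $Y_W^o$ applied $\mathbb{C}$-linearly to the vector $Y_V(u_2,z_2-z_1)u_1$. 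Hence on this open region the iterate and the product of Proposition \ref{YWO-rat} coincide, so they are expansions of one and the same rational function, whose only possible poles are at $z_1=0$, $z_2=0$ and $z_1=z_2$.

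Finally, it remains to enlarge the domain of convergence of the iterate $\langle w',Y_W^o(Y_V(u_2,z_2-z_1)u_1,z_1)w\rangle$ from the subregion just used to all of $|z_1|>|z_2-z_1|>0$. Writing out $\mathcal{A}(z_1)$ and expanding $Y_V(u_2,z_2-z_1)u_1$, the iterate is an iterated series in the variables $z_2-z_1$ and $z_1^{-1}$ whose inner sums occur as parts of the already absolutely convergent product series; one checks the appropriate truncation in $z_2-z_1$ and then applies Lemma \ref{IterSeries} (together with Lemma \ref{IterSeries-Inf} to treat the $z_1^{-1}$ direction) exactly as in the proofs of Section \ref{Section-Iterate} to propagate absolute convergence to the full multicircular domain and to identify the limit with the rational function above. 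The main obstacle throughout is the covariance identity for $Y_V$ together with the careful tracking of the domains on which all the manipulations are valid; the remainder follows the patterns of Proposition \ref{YWO-rat} and Section \ref{Section-Iterate}.
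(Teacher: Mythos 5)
Your proposal is correct and takes essentially the same route as the paper: your ``covariance identity'' is precisely formula (5.2.35) of \cite{FHL} applied to $u_1$ with $x=z_1$, $x_0=z_2-z_1$ (so that $-x_0/((x+x_0)x)=z_2^{-1}-z_1^{-1}$), which is exactly the key input the paper uses, and the remaining steps --- associativity of $Y_W^L$ in the inverted variables on the region $|z_2^{-1}|>|z_1^{-1}|>|z_2^{-1}-z_1^{-1}|>0$, followed by Lemma \ref{IterSeries} to justify the rearrangements and extend the convergence of the iterate to all of $|z_1|>|z_2-z_1|>0$ --- coincide with the paper's argument. The only difference is the order of presentation (the paper first establishes absolute convergence of the iterate via auxiliary variables and then deduces the equality from associativity, whereas you derive the equality on the smaller region first), which does not affect correctness.
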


\begin{proof}
We will use Formula (5.2.35) in \cite{FHL}: for every $u\in V$, we have 
$$e^{xL(1)}(-x^{-2})^{L(0)}Y_V(u, x_0) = Y_V\left(e^{(x+x_0)L(1)}(-(x+x_0)^{-2})^{L(0)}u, -\frac{x_0}{(x+x_0)x}\right) e^{xL(1)}(-x^{-2})^{L(0)}$$
as formal series in $(\text{End }V)[[x, x^{-1}, x_0, x_0^{-1}]]$ where all the negative powers of $x+x_0$ are expanded as power series in $x_0$. The proof of the formula can be found in \cite{FHL}, Section 5.2. The idea is to use the $L(0)$-commutator formula and $L(1)$-commutator formula to obtain $L(0)$-conjugation formula and $L(1)$-conjugation formula. No other property was needed. So the proof carries over to MOSVAs and their modules. 

To apply this formula, we first study the formal series
$$\langle w', Y_W^L\left({e^{{x_1}L(1)}}{( - x_1^{ - 2})^{L(0)}}{Y_V}({u_2},x_0){u_1},x_1^{ - 1}\right)w\rangle$$
in $\C[[x_0, x_0^{-1}, x_1, x_1^{-1}]]$.
By the formula above, the formal series is equal to 
\begin{equation}\label{YWO-ass-2}\left\langle w', Y_W^L\left(Y_V\left(e^{(x_1+x_0)L(1)}(-(x_1+x_0)^{-2})^{L(0)}u, -\frac{x_0}{(x_1+x_0)x_1}\right) e^{x_1L(1)}(-x_1^{-2})^{L(0)}{u_1},x_1^{ - 1}\right)w\right\rangle
\end{equation}
in $\C[[x_0, x_0^{-1}, x_1, x_1^{-1}]]$, with all the negative powers of $x_1+x_0$ expanded as power series in $x_0$. Moreover, it is easy to see that this series has at most finitely many negative powers of $x_0$ and at most finitely many positive powers of $x_1$. 

In order to substitute $x_0$ and $x_1$ by complex numbers $z_0$ and $z_1$, we first note from the rationality of iterates of two vertex operators, for complex numbers $z_0, z_1, \zeta_0, \zeta_1$ with $|\zeta_1
|> |z_0\zeta_1/((z_1+\zeta_0))|>0$, i.e., $|z_1+\zeta_0|>|z_0|>0, |\zeta_1|>0$, the complex series 
\begin{align*}
& \left\langle w', Y_W^L\left(Y_V\left(e^{(z_1+\zeta_0)L(1)}(-(z_1+\zeta_0)^{-2})^{L(0)}u, -\frac{z_0\zeta_1}{(z_1+\zeta_0)}\right) e^{z_1L(1)}(-z_1^{-2})^{L(0)}{u_1},\zeta_1\right)w\right\rangle \\
& \qquad = \sum_{i \text{ finite}}\sum_{m,n} a_{mni}(z_1+\zeta_0)^i\left(-\frac{z_0\zeta_1}{(z_1+\zeta_0)}\right)^{-m-1}(\zeta_1)^{-n-1}
\end{align*}
(with variables $-z_0\zeta_1/((z_1+\zeta_0))$ and $\zeta_1$) converges absolutely to a rational function with the only possible poles at $z_0=0, z_1=0, \zeta_1=0, z_1+\zeta_0=0, z_1+\zeta_0=z_0$ (note the operators $e^{(z_1+\zeta_0)L(1)}$ and $e^{z_1L(1)}$ acts as polynomials, and $(-(z_1+\zeta_0)^{-2})^{-L(0)}$ and $(-z_1^{-2})^{L(0)}$ acts by a scalar multiplication on homogeneous elements). Note that in this expansion, the power of $(-z_0\zeta_1/(z_1+\zeta_0))$ is lower-truncated, i.e., $m$ is bounded above. In particular, the power of $z_0$ is lowert-truncated. Moreover, for each fixed $m$, the power of $\zeta_1$ is lower-truncated, i.e., $n$ is also bounded above for each fixed $m$.

Now, we further expand the negative powers of $z_1+\zeta_0$ as power series in $\zeta_0$, i.e., 
\begin{align*}
& \sum_{i \text{ finite}}\sum_{m,n} a_{mni}(z_1+\zeta_0)^i\left(-\frac{z_0\zeta_1}{(z_1+\zeta_0)}\right)^{-m-1}\zeta_1^{-n-1}\\
& \qquad= \sum_{i \text{ finite}}\sum_{m,n} a_{mni}(-1)^{m+1}z_0^{-m-1}\zeta_1^{-m-n-2}\left(\sum_{k=0}^\infty \binom{m+1+i}{k} z_1^{m+1+i-k} \zeta_0^k\right). 
\end{align*}
The resulting iterated series on the right-hand-side converges absolutely to the rational function when $|z_1+\zeta_0|>|z_0|>0, |z_1|>|\zeta_0|, |\zeta_1|>0$. We check that all the conditions of Lemma \ref{IterSeries} is satisfied. Thus the complex series corresponding to the iterated series on the right-hand-side is precisely the Laurent series expansion of the rational function when $|z_1|>|\zeta_0|, |z_1|>|\zeta_0-z_0|, |\zeta_1|>0, |z_0|>0$. In particular, the complex series converges absolutely when $|z_1|>|\zeta_0|, |z_1|>|\zeta_0-z_0|, |\zeta_1|>0, |z_0|>0$. Now we substitute $\zeta_0 = z_0, \zeta_1=z_1^{-1}$ to see that the complex series
$$ \left\langle w', Y_W^L\left(Y_V\left(e^{(z_1+z_0)L(1)}(-(z_1+z_0)^{-2})^{L(0)}u, -\frac{z_0z_1^{-1}}{(z_1+z_0)}\right) e^{z_1L(1)}(-z_1^{-2})^{L(0)}{u_1},z_1^{-1}\right)w\right\rangle
$$
converges absolutely when $|z_1|>|z_0|>0$ to a rational function with the only possible poles at $z_0=0, z_1=0, z_1+z_0 = 0$. And this series is precisely the complex series obtained from substituting $x_0=z_0$ and $x_1=z_1$ in the formal series (\ref{YWO-ass-2}).

We then perform the transformation $z_0\mapsto z_2-z_1$ to see that the complex series
\begin{align*}
& \left\langle w', Y_W^L\left(Y_V\left(e^{z_2L(1)}(-z_2^{-2})^{L(0)}u, -\frac{z_2-z_1}{z_2z_1}\right) e^{z_1L(1)}(-z_1^{-2})^{L(0)}{u_1},z_1^{ - 1}\right)w\right\rangle\\
= & \langle w', Y_W^L(Y_V(e^{z_2L(1)}(-z_2^{-2})^{L(0)}u, -z_1^{-1}+z_2^{-1}) e^{z_1L(1)}(-z_1^{-2})^{L(0)}{u_1},z_1^{ - 1})w\rangle
\end{align*}
which is equal to 
$$\langle w', Y_W^o(Y_V(u_2, z_2-z_1)u_1, z_1)w\rangle,$$
converges absolutely when $|z_1|>|z_1-z_2|>0$ to a rational function with the only possible poles at $z_1=0,z_2=0, z_1=z_2$. 

Now we use the definition of $Y_W^o$ to rewrite the left-hand-side as
$$\langle w',Y_W^L({e^{{z_2}L(1)}}{( - z_2^{ - 2})^{L(0)}}{u_2},z_2^{ - 1})Y_W^L({e^{{z_1}L(1)}}{( - z_1^{ - 2})^{L(0)}}{u_1},z_1^{ - 1})w\rangle$$
This series converges absolutely when $|z_1^{-1}|>|z_2^{-1}|>0$ to a rational function with the only possible poles at $z_1=0, z_2=0, z_1=z_2$. Moreover, by associativity, when $|z_2^{-1}|>|z_1^{-1}|>|z_1^{-1}-z_2^{-1}|>0$, i.e., $|z_1|>|z_2|>|z_1-z_2|>0$, it is equal to 
$$\langle w', Y_W^L({Y_V}({e^{{z_2}L(1)}}{( - z_2^{ - 2})^{L(0)}}{u_2},z_2^{ - 1} - z_1^{ - 1}){e^{{z_1}L(1)}}{( - z_1^{ - 2})^{L(0)}}{u_1},z_1^{ - 1})w\rangle $$
Thus left-hand-side is equal to right-hand-side when $|z_1|>|z_2|>|z_1-z_2|>0$. 
\end{proof}

\subsection{Contragredient of a M\"obius left $V$-module}

We first discuss the results for grading-restricted M\"obius left $V$-modules. Then we deal with the non-grading-restricted case with a stronger pole-order condition. 

\begin{thm}\label{W'Module}
Let $(V, Y_V, \one, \rho_V)$ be a M\"obius MOSVA and $(W, Y_W^L, \rho_W)$ be a grading-restricted M\"obius left $V$-module. On the graded dual $W'= \coprod_{n\in \C} W_{[n]}^*$, we define a vertex operator action of $V$ by
$$\langle Y_W'(u, z)w', w\rangle = \langle w', Y_W^o(u, z)w\rangle = \langle w', Y_W^L(e^{zL(1)}(-z^{-2})^{L(0)}u,z^{-1})w\rangle, $$
and an $\mathfrak{sl}(2)$-action $\rho_W'$ by $\rho_W'(L_j) = L'(j)$ for $j=0, \pm 1$, where 
$$\langle L_W'(j)w', w\rangle = \langle w', L_W(-j)w\rangle.$$
Then $(W', Y_W', \rho_W')$ forms a M\"obius left $V^{op}$-module. 
\end{thm}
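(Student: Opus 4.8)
The plan is to reduce the theorem to the equivalence between M\"obius right $V$-modules and M\"obius left $V^{op}$-modules established in Remark \ref{leftMobVopModule}, together with the rationality and associativity facts for the opposite vertex operator $Y_W^o$ proved in Proposition \ref{YWO-rat} and Proposition \ref{YWO-ass}. Concretely, I will first verify that $(W', Y_W', \rho_W')$ is a M\"obius \emph{right} $V$-module with respect to a suitably defined right vertex operator, and then invoke the skew-symmetry correspondence to conclude it is a M\"obius left $V^{op}$-module. Actually the cleanest route is the reverse: show directly that $(W', Y_W', \rho_W')$, with $Y_W'$ as the \emph{left} action of $V^{op}$, satisfies the axioms of Definition \ref{DefMOSVA-L} for the MOSVA $V^{op}$, using that $Y_{V^{op}} = Y_V^s = e^{xD_V}Y_V(\cdot,-x)\cdot$ and that the defining identity $\langle Y_W'(u,z)w',w\rangle = \langle w', Y_W^o(u,z)w\rangle$ matches the associativity pattern of $Y_W^o$ with iterates of $Y_V^s$ rather than products of $Y_V$.

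The steps, in order: (1) Check the grading axioms: $\rho_W'$ is an $\mathfrak{sl}(2)$-representation because $\rho_W$ is and $j \mapsto -j$ together with transposition is an anti-automorphism composed with the standard involution of $\mathfrak{sl}(2)$; the lower-bound and local-finiteness conditions on $W'$ follow from grading-restrictedness of $W$ (this is the only point where that hypothesis enters, as the introduction promises). The $L_W'(0)$- and $L_W'(1)$-commutator formulas for $Y_W'$ with $V^{op}$-weights follow by transposing the corresponding formulas for $Y_W^o$; these in turn are obtained by the same formal manipulation with $e^{xL(1)}(-x^{-2})^{L(0)}$ that appears in \cite{FHL}, Section 5.2, combined with the M\"obius structure on $V^{op}$ from Proposition \ref{Opp-Mob-MOSVA}. (2) Identity property: $Y_W'(\one,z) = 1_{W'}$ because $L(j)\one = 0$ forces $e^{zL(1)}(-z^{-2})^{L(0)}\one = \one$, hence $Y_W^o(\one,z) = Y_W^L(\one,z^{-1}) = 1_W$. (3) $D$-derivative and $D$-commutator properties for $Y_W'$ as a $V^{op}$-module: here $D_{V^{op}} = D_V$ and $D_{W'} = L_W'(-1)$ is the transpose of $L_W(1)$; differentiate the defining formula in $z$ and use the $L(-1)$- and $L(1)$-commutator formulas for $Y_W^L$ — this is again the computation in \cite{FHL}. (4) Rationality of products of any number of $V^{op}$-vertex operators on $W'$: this is exactly Proposition \ref{YWO-rat} transposed, since $\langle w', Y_W^o(u_n,z_n)\cdots Y_W^o(u_1,z_1)w\rangle = \langle Y_W'(u_1,z_1)\cdots Y_W'(u_n,z_n)w', w\rangle$ (note the reversal of order under transposition matches the $|z_1|>\cdots>|z_n|$ convention). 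Rationality of iterates of two $V^{op}$-operators is Proposition \ref{YWO-ass} transposed, \emph{provided} one checks that $Y_W^o$ composed with $Y_V^s$ in an iterate equals $Y_W^o$ composed with $Y_V$ in the pattern of Proposition \ref{YWO-ass}; this needs a short computation relating $Y_{V^{op}}(u_1,z_1-z_2)u_2 = e^{(z_1-z_2)D_V}Y_V(u_2,-(z_1-z_2))u_1$ to the argument appearing inside $Y_W^o$, using the $L(-1)$-conjugation property. (5) Associativity for $Y_W'$ as a $V^{op}$-module: the equality of the two rational functions is the ``Moreover'' clause of Proposition \ref{YWO-ass} transposed, on the overlap region.

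The main obstacle will be step (4)--(5): bookkeeping the substitution $z \mapsto z^{-1}$ and the order-reversal so that the regions of convergence $|z_1|>\cdots>|z_n|>0$ and the associativity region $|z_1|>|z_2|>|z_1-z_2|>0$ for the $V^{op}$-module structure correspond correctly to the regions $|z_1|>\cdots>|z_n|>0$ and $|z_1|>|z_2-z_1|>0$ appearing in Propositions \ref{YWO-rat} and \ref{YWO-ass}, and verifying that the \emph{opposite} vertex operator inside an iterate genuinely produces $Y_V^s$-iterates and not $Y_V$-iterates. Once the identification of which ``opposite'' is which is pinned down — the key point flagged in the remark after the definition of $Y_W^o$, that $Y_W^o \ne Y_W^{s(L)}$ in general — everything else is a transpose of results already proved. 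I do not expect to need the results of Section 4 directly here beyond what is already packaged into Proposition \ref{YWO-rat}; the grading-restriction hypothesis is used solely to guarantee $W'' = W$ so that the transpose construction is reflexive and the graded dual is large enough to detect the module axioms.
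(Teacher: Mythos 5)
Your proposal follows essentially the same route as the paper's proof: reduce to verifying the left $V^{op}$-module axioms directly, cite \cite{FHL}/\cite{HLZ1} for the $L(j)$-commutator and $L(-1)$-derivative computations, use $(W')'=W$ (the sole use of grading-restriction) to transpose Propositions \ref{YWO-rat} and \ref{YWO-ass} for rationality of products and associativity, and handle the iterate of $Y_V^s$ by rewriting $Y_V^s(u_1,z_1-z_2)u_2 = e^{(z_1-z_2)D_V}Y_V(u_2,z_2-z_1)u_1$ and absorbing the exponential via $L(-1)$-conjugation — exactly the key computation in the paper. The approach and all key ingredients match; no gaps.
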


\begin{proof} The commutator formulas for $L_W'(0), L_W'(-1)$ and $L_W'(1)$ follows from the computations in \cite{HLZ1}, Lemma 2.22. The argument there carries over to MOSVAs and requires some work. For brevity we will not include them here but redirect the reader to \cite{HLZ1}, Page 59 to 61. From Remark \ref{leftMobVopModule}, it suffice to verify that $(W', Y_W', \d_{W}', L_W'(-1))$ forms a left $V^{op}$-module. 
\begin{enumerate}
\item The lower bound condition obviously hold. The $\d$-grading condition and $\d$-commutator formula follow from the discussions in Remark \ref{d-operator}. In particular, from the $\d$-commutator formula and the lower bound condition, one sees that the series $Y_W'(u, z)w'$ is lower truncated. 
\item To see the identity property, note that $L_V(1)\one = 0$ and $L_V(0)\one = 0$, thus $e^{xL(1)}(-x^{-2})^{L(0)}\one = \one$. So $Y_W^o(\one, x) = Y_W^L(\one, x^{-1}) = 1_W$. Then follows $Y_W'(\one, x)w' = w'$. 
\item The $L(-1)$-derivative property is verified in \cite{FHL}. See \cite{FHL}, Page 47 and 48. 
\item Since $W$ is grading restricted, $(W')' = W$. Thus for the rationality of products, it suffices to verify that for every $w'\in W', w\in W, u_1, ..., u_n \in V$, the series
$$\langle Y_W'(u_1, z_1)\cdots Y_W'(u_n, z_n)w', w\rangle = \langle w', Y_W^o(u_n, z_n)\cdots Y_W^o(u_1, z_1)w\rangle$$
converges absolutely when $|z_1|>\cdots >|z_n|>0$ to a rational function with the only possible poles at $z_i=0, i=1,...,n$ and $z_i=z_j, 1\leq i < j \leq n$. This was shown in Proposition \ref{YWO-rat}. For the rationality of iterates, it suffices to show that for every $w'\in W', w\in W, u_1, u_2\in V$, the series
$$\langle Y_W'(Y_V^s(u_1, z_1-z_2)u_2, z_2)w', w\rangle$$ 
converges absolutely when $|z_2|>|z_1-z_2|>0$ to a rational function with the only possible poles at $z_1=0, z_2=0$ and $z_1=z_2$. We first use the definition of $Y_V^s$, then use $L(-1)$-conjugation property to see that
\begin{align*}
& \langle Y_W'(Y_V^s(u_1, z_1-z_2)u_2, z_2)w', w\rangle\\
= & \langle Y_W'(e^{(z_1-z_2)L(-1)}Y_V(u_2, z_2-z_1)u_1, z_2)w', w\rangle\\
= & \langle Y_W'(Y_V(u_2, z_2-z_1)u_1, z_1)w', w\rangle
\end{align*}
Note that from Remark \ref{Taylor}, this series is still in variables $z_2$ and $z_1-z_2$, where $z_1$ should be regarded as the sum $z_2+z_1-z_2$ and thus negative powers of $z_1$ should be expanded as power series in $(z_1-z_2)$. Then we use the definition of $Y_W'$ to see that this series is equal to 
$$\langle w', Y_W^o(Y_V(u_2, z_2-z_1)u_1, z_1)w\rangle.$$
And the proof of Proposition \ref{YWO-ass} shows that this series converges absolutely when $|z_2|>|z_1-z_2|>0$ to the same rational function as $\langle w', Y_W^o(u_2, z_2)Y_W^o(u_1, z_1)w\rangle$
\item The associativity follows from the discussion above and Proposition \ref{YWO-ass}. 
\end{enumerate}
\end{proof}

\begin{defn}
The module $(W', Y_W', \rho_W')$ is referred as the contragredient module of $(W, Y_W^L, \rho_W)$. In case there is no confusion, we just use $W'$ to denote it. From the results in Section 5 and Proposition \ref{Opp-Mob-MOSVA} , one easily sees that with the skew-symmetric opposite vertex operator of $Y_{W}'$, $W'$ also a M\"obius right $V$-module. 
\end{defn}

\begin{rema}
When $W$ is not grading restricted, one has to verify the rationality with $w$ taking value in the much larger space $(W')'$. So the above proof does not work. To construct the contragredient module for non-grading-restricted modules, an additional condition has to be assumed. 
\end{rema}

\begin{thm}\label{W'Module-1}
Let $(V, Y_V, \one, \rho_V)$ be a M\"obius MOSVA and $(W, Y_W^L, \rho_W)$ be a M\"obius left $V$-module. If the vertex operator $Y_W^L$ satisfies the \textit{strong pole-order condition}, that there exists a real number $C$, such that for every homogeneous $u_1, u_2\in V, w'\in W'$ and $w\in W$, the order of the pole $z_1=0$ in the rational function given by 
$$\langle w', Y_W^L(u_1, z_1)Y_W^L(u_2, z_2)w\rangle$$
is bounded above by $\wt u_1 + \text{Re}(\wt w)+ C$, then with $Y_W'$ and $\rho_W'$ are defined in the same way as the above theorem, $(W', Y_W', \rho_W')$ forms a M\"obius left $V^{op}$-module. 
\end{thm}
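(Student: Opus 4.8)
The plan is to run the argument of Theorem~\ref{W'Module} for everything that does not feel the grading restriction, and to replace the one step that does feel it — the direct verification of rationality and associativity, which there relied on $(W')'=W$ — by an appeal to the formal-variable criterion of Theorem~\ref{Formal-left}, applied to the MOSVA $V^{op}$. By Remark~\ref{leftMobVopModule} it suffices to show that $(W',Y_W',\d_{W}',L_W'(-1))$ is a left $V^{op}$-module; the $\mathfrak{sl}(2)$-commutator relations for $L_W'(0),L_W'(\pm 1)$ and the local nilpotence of $L_W'(0)-n$ on $W'_{[n]}$ are obtained exactly as in Theorem~\ref{W'Module}, the computations cited there from \cite{HLZ1} carrying over verbatim. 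Among the left-$V^{op}$-module axioms, the grading axioms (via Remark~\ref{d-operator}), the identity property, and the $D$-derivative and $D$-commutator properties are verified word for word as in Theorem~\ref{W'Module}, since those arguments never used $\dim W_{[n]}<\infty$. So the whole issue is to produce rationality and associativity, and for this I would verify the hypothesis of Theorem~\ref{Formal-left}, namely the \emph{weak associativity with pole-order condition} for $Y_W'$.

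The task then reduces to: for homogeneous $u_1,u_2\in V$ and homogeneous $w'\in W'$, exhibiting an integer $p_1$ depending only on $u_1$ and $w'$ with
$$(x_0+x_2)^{p_1}Y_W'(Y_V^s(u_1,x_0)u_2,x_2)w'=(x_0+x_2)^{p_1}Y_W'(u_1,x_0+x_2)Y_W'(u_2,x_2)w'$$
in $W'[[x_0,x_0^{-1},x_2,x_2^{-1}]]$. Since $Y_V^s(u_1,x_0)u_2$ is lower-truncated in $x_0$, the left side is, for any $p_1\ge 0$, manifestly a well-defined element of $W'[[x_0,x_0^{-1},x_2,x_2^{-1}]]$; being an identity of formal series with coefficients in $W'=\coprod_n W_{[n]}^{*}$, it may be checked by pairing against an arbitrary homogeneous $w\in W$. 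Using $\langle Y_W'(u,z)w'',w\rangle=\langle w',Y_W^o(u,z)w\rangle$ and the $D$-conjugation property of $Y_W'$ (available because $D_{V^{op}}=D_V$, so that $Y_W'(e^{yD_V}v,x)=Y_W'(v,x+y)$), one has $Y_W'(Y_V^s(u_1,x_0)u_2,x_2)w'=Y_W'(Y_V(u_2,-x_0)u_1,x_0+x_2)w'$, and both sides of the paired identity become the two expansions, in the sense of Remark~\ref{Taylor}, of the rational function $\langle w',Y_W^o(u_2,z_2)Y_W^o(u_1,z_1)w\rangle=\langle w',Y_W^o(Y_V(u_2,z_2-z_1)u_1,z_1)w\rangle$ of Proposition~\ref{YWO-ass}, with $z_1=x_0+x_2$, $z_2=x_2$. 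That these expansions agree after multiplication by $(x_0+x_2)^{p_1}$ with $p_1$ at least the order of the pole $z_1=0$ is exactly the passage from associativity to weak associativity spelled out in Section~\ref{FormFact-1}; the point requiring care — and the reason the \emph{strong} pole-order condition enters — is that, as in Remark~\ref{PoleCondWeakAssoc}, $p_1$ must be chosen uniformly in $u_2$ and in $w$.

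Everything therefore comes down to a bound on the order of the pole $z_1=0$ of $\langle w',Y_W^o(u_2,z_2)Y_W^o(u_1,z_1)w\rangle$ that is uniform in $u_2$ and $w$. I would get it by unwinding $Y_W^o(u,z)=Y_W^L\bigl(e^{zL(1)}(-z^{-2})^{L(0)}u,z^{-1}\bigr)$: for homogeneous $u_1,u_2$,
$$\langle w',Y_W^o(u_2,z_2)Y_W^o(u_1,z_1)w\rangle=\sum_{m_1,m_2}\frac{(-1)^{\wt u_1+\wt u_2}}{m_1!\,m_2!}\,z_1^{-2\wt u_1}z_2^{-2\wt u_2}\,\bigl\langle w',Y_W^L(L(1)^{m_2}u_2,z_2^{-1})\,Y_W^L(L(1)^{m_1}u_1,z_1^{-1})w\bigr\rangle,$$
a finite sum by local nilpotence of $L(1)$. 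Writing the $(m_1,m_2)$-term's rational function as $f/(\zeta_1^{a_1}\zeta_2^{a_2}(\zeta_1-\zeta_2)^{a_{12}})$ with $\zeta_i=z_i^{-1}$, the substitution $\zeta_i=z_i^{-1}$ sends $\zeta_1^{-a_1}$ and $(\zeta_1-\zeta_2)^{-a_{12}}$ to \emph{nonnegative} powers of $z_1$, so the pole $z_1=0$ is created only by the prefactor $z_1^{-2\wt u_1}$ together with the $\zeta_1$-degree of $f$, which is at most $\deg f=a_1+a_2+a_{12}+\text{Re}(\wt w')-\wt(L(1)^{m_2}u_2)-\wt(L(1)^{m_1}u_1)-\text{Re}(\wt w)$ by Remark~\ref{numdegree}; and the strong pole-order condition bounds the leftmost-argument pole, $a_2\le \wt(L(1)^{m_2}u_2)+\text{Re}(\wt w)+C$. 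Substituting and simplifying, the order of the pole $z_1=0$ of the $(m_1,m_2)$-term is at most $\wt u_1+m_1+\text{Re}(\wt w')+C$; taking the maximum over the finite index set produces a $p_1$ depending only on $u_1$ (through $\wt u_1$ and the nilpotence exponent of $L(1)$ on $u_1$), on $w'$ (through $\text{Re}(\wt w')$), and on the universal constant $C$, but not on $u_2$ or $w$. With this $p_1$ the weak associativity holds, Theorem~\ref{Formal-left} then yields the left $V^{op}$-module structure on $(W',Y_W')$, and together with the M\"obius data above this shows $(W',Y_W',\rho_W')$ is a M\"obius left $V^{op}$-module.

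The hard part is this uniform pole-order estimate. The mechanism is delicate: after the inversion $z\mapsto z^{-1}$ built into $Y_W^o$, the interior poles $\zeta_i=0$ and $\zeta_1=\zeta_2$ of $Y_W^L$ no longer feed the pole at $z_1=0$, so the only dangerous contribution is the numerator degree; but Remark~\ref{numdegree} re-expresses that degree precisely in terms of those same pole orders and the weights, and it is only the \emph{strong} form of the pole-order condition — a bound of the linear shape $\wt u_1+\text{Re}(\wt w)+C$ with a single global $C$ — that makes the final estimate independent of $u_2$ and $w$ after one cancels. A secondary, more routine difficulty is the formal-variable bookkeeping in the reduction (that the expansions of $x_0+x_2$ and the rearrangements are legitimate precisely after multiplying by $(x_0+x_2)^{p_1}$), but this runs parallel to the discussion in Section~\ref{FormFact-1} and the proof of Theorem~\ref{Formal-left} and needs no new idea.
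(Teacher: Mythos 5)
Your proposal is correct and follows essentially the same route as the paper: reduce everything to rationality and associativity, verify the weak associativity with pole-order condition for $Y_W'$ by combining Propositions \ref{YWO-rat} and \ref{YWO-ass} with the degree count of Remark \ref{numdegree} and the strong pole-order condition, and then invoke Theorem \ref{Formal-left}. The only discrepancy is a harmless bookkeeping slip: the prefactor coming from $(-z_1^{-2})^{L(0)}e^{z_1L(1)}$ is $z_1^{m_1-2\wt u_1}$ rather than $z_1^{-2\wt u_1}$, so the extra $m_1$ cancels and the pole order is bounded by $\wt u_1+\mathrm{Re}(\wt w')+C$ uniformly in $m_1,m_2$, as in the paper — though your weaker bound still depends only on $u_1$ and $w'$ after maximizing over the finite index set, so the argument goes through either way.
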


\begin{proof}
It suffices to deal with the rationality and associativity axioms. The idea is to use the formal variable approach. With the strong pole-order condition, we proceed to verify the weak associativity with the pole-order condition in Theorem \ref{Formal-left} based on the results of Proposition \ref{YWO-rat} and \ref{YWO-ass}. Then the conclusion follows from the theorem. 

Let $w'\in W', u_1, u_2\in V, w\in W$ be homogeneous. We rewrite the series 
$$\langle Y_W'(u_1, z_1)Y_W'(u_2, z_2)w', w\rangle = \langle w', Y_W^o(u_2, z_2)Y_W^o(u_1, z_1)w\rangle $$
as 
\begin{samepage}
\begin{align}\label{Rat-2-YWO}
\sum\limits_{{m_1} = 0}^{\text{finite}}  \sum\limits_{{m_2} = 0}^{\text{finite}}  {( - 1)}^{\text{wt }u_2 + \text{wt }u_1}\frac{1}{{{m_1}!}}\frac{1}{{{m_2}!}} & z_2^{{m_2} - 2\text{wt }u_2}z_1^{{m_1} - 2\text{wt }u_1}  \cdot \\
& \langle w',Y_W^L(L{(1)^{{m_2}}}{u_2},z_2^{ - 1})Y_W^L(L{(1)^{{m_1}}}{u_1},z_1^{ - 1})w\rangle \nonumber
\end{align}
\end{samepage}
We shall use the computations in Remark \ref{numdegree} to give an explicit upper bound of the order of the pole $z_1=0$ for the rational function given by each term in the sum. 

For each fixed $m_1, m_2$, the rational function determine by $\langle w', Y_W^L(L(1)^{m_2}u_2, z_2^{-1})Y_W^L(L(1)^{m_1}u_1, z_1^{-1})w\rangle$ is of the form
\begin{equation*}
\frac{{f(z_2^{ - 1},z_1^{ - 1})}}{{z_2^{ - {p_1}}z_1^{ - {p_2}}{{(z_2^{ - 1} - z_1^{ - 1})}^{{p_{12}}}}}}=\frac{{f(z_2^{ - 1},z_1^{ - 1})}}{{z_2^{ - {p_1} - {p_{12}}}z_1^{ - {p_2} - {p_{12}}}{{({z_1} - {z_2})}^{{p_{12}}}}}}
\end{equation*}
Note that $Y_W^L$ satisfies the strong pole-order condition, thus
$$p_1 \leq \wt (L(1)^{m_2} u_2) + \text{Re}(\wt w) + C$$
Let $d$ be the degree of $f$ as a polynomial in $z_2^{-1}, z_1^{-1}$. From Remark \ref{numdegree},  
$$d = p_1 + p_2 + p_{12} + \wt w' - \wt (L(1)^{m_2}u_2) - \wt (L(1)^{m_1} u_1) - \wt w$$
Note that though $\wt w'$ and $\wt w$ might be complex numbers, their difference is supposed to be an integer. In particular, we know that 
$$\wt w' - \wt w = \text{Re}(\wt w') - \text{Re}(\wt w)$$
If we write 
$$f(x_1, x_2) = \sum_{k=0}^d a_k x_1^k x_2^{d-k},$$
then 
$$f(z_2^{-1}, z_1^{-1}) = \sum_{k=0}^d a_k z_2^{-k} z_1^{-d+k}$$
where the lowest possible power of $z_1$ is $-d$. Therefore, the order of pole of the rational function that each term in (\ref{Rat-2-YWO}) converges to is bounded above by
\begin{align*}
d - p_2 - p_{12} - m_1 + 2 \wt u_1 & = p_1 +\wt w' - \wt (L(1)^{m_2}u_2) - \wt (L(1)^{m_1} u_1) - \wt w - m_1 + 2 \wt u_1\\
& =  p_1 + \text{Re}(\wt w') - \wt (L(1)^{m_2}u_2)  -\text{Re}(\wt w) + \wt u_1\\
& \leq  \text{Re}(\wt w') + \wt u_1 + C.
\end{align*}
This upper bound is independent of $m_1, m_2$. Thus we have proved that the order of the pole $z_1=0$ of the rational function given by 
$$\langle Y_W'(u_1, z_1)Y_W'(u_2, z_2)w', w\rangle = \langle w', Y_W^o(u_2, z_2)Y_W^o(u_1, z_1)w\rangle $$
is controlled above by the real number that depends only $u_1$ and $w_1$. So with the assumption here, the vertex operator $Y_W'$ satisfies the pole-order condition as in Definition \ref{NSHTCL}. 

Now with the conclusion of Proposition \ref{YWO-rat} and \ref{YWO-ass}, we know that one can choose $q_1 = \wt w' + \wt u_1 + C$ depending only on $u_1$ and $w'$, $q_2$ depending only on $u_2$ and $w'$, $q_{12}$ depending only on $u_1$ and $u_2$, such that 
$$(z_0+z_2)^{q_1}z_2^{q_2}z_0^{q_{12}}\langle Y_W'(u_1, z_0+z_2)Y_W'(u_2, z_2)w', w\rangle = (z_0+z_2)^{q_1}z_2^{q_2}z_0^{q_{12}}\langle Y_W'(Y_V^s(u_1, z_0)u_2, z_2)w', w\rangle$$
converges absolutely to a polynomial function. 
Thus as formal series with coefficients in $W'$,
$$(x_0+x_2)^{q_1}x_2^{q_2}x_0^{q_{12}}\langle Y_W'(u_1, x_0+x_2)Y_W'(u_2, x_2)w' = (x_0+x_2)^{q_1}x_2^{q_2}x_0^{q_{12}}\langle Y_W'(Y_V^s(u_1, x_0)u_2, x_2)w'\in W'[[x_0, x_2]].$$
has no negative powers of $x_0$ and $x_2$. Thus they all live in $W'[[x_0, x_2]]$. The weak associativity relation is then seen by dividing both sides by $x_2^{q_2}$ and $x_0^{q_{12}}$. Moreover, the choice of $q_1$ depends only on $u_1$ and $w'$. Thus, as a consequence of Theorem \ref{Formal-left}, the rationality and associativity axioms hold. 
\end{proof}

\begin{rema}
This strong pole-order condition is natural because it is satisfied by all the M\"obius left modules for vertex algebras. 
\end{rema}

\noindent {\small \sc Department of Mathematics, Rutgers University, Piscataway, NJ  08854-8019, USA}

\noindent {\em E-mail address}: fq15@scarletmail.rutgers.edu, 
\end{document}